\newtheorem*{remark*}{Remark}
\def\R{{\mathbb R}}
\def\upddots{\mathinner{\mkern 1mu\raise 1pt \hbox{.}\mkern 2mu
\mkern 2mu \raise 4pt\hbox{.}\mkern 1mu \raise 7pt\vbox {\kern 7
pt\hbox{.}}} }
\newcommand{\spn}{{Sp_{2n}(\F)}}
\newcommand{\gsppn}{{GSp^{^+}_{2n}(\F)}}
\newcommand{\gspn}{{GSp_{2n}(\F)}}
\newcommand{\mspn}{{\overline{Sp_{2n}(\F)}}}
\newcommand{\tspn}{T_{2n}(\F)}
\newcommand{\mtspn}{\overline{T_{2n}(\F)}}
\newcommand{\tgspn}{T'_{2n}(\F)}
\newcommand{\mtgspn}{\overline{T'_{2n}(\F)}}
\newcommand{\tgsppn}{T^{^+}_{2n}(\F)}
\newcommand{\mtgsppn}{\overline{T^{^+}_{2n}(\F)}}
\newcommand{\bspn}{B_{2n}(\F)}
\newcommand{\bgspn}{B'_{2n}(\F)}
\newcommand{\mbspn}{{\overline{B_{2n}(\F)}}}
\newcommand{\mbgspn}{{\overline{B'_{2n}(\F)}}}
\newcommand{\mbgsppn}{{\overline{B^{^+}_{2n}(\F)}}}
\newcommand{\zspn}{{{N_{2n}}}(\F)}
\newcommand{\mzspn}{\overline{{{N_{2n}}}(\F)}}
\newcommand{\tgln}{T_{GL_n}(\F)}
\newcommand{\zgln}{N_{_{GL_n}}(\F)}
\newcommand{\mgspn}{\overline{GSp_{2n}(\F)}}
\newcommand{\F}{F}
\newcommand{\Of}{\mathbb O_{\F}}
\newcommand{\Pf}{\mathbb P_{\F}}
\newcommand{\spx}{\spn}
\newcommand{\f}{_{_\F}}
\newcommand{\gspx}{{\gspn}}
\newcommand{\mgpspn}{\overline{GSp^{^+}_{2n}(\F)}}
\newcommand{\mgsppn}{\overline{GSp^{^+}_{2n}(\F)}}
\newcommand{\kgspn}{{GSp_{2n}(\Of)}}
\newcommand{\kspn}{{Sp_{2n}(\Of)}}
\newcommand{\mkgspn}{{K^\eta_{2n}(\F)}}
\newcommand{\mkspn}{{K_{2n}(\F)}}
\newcommand{\mkgspnp}{{K^+_{2n}(\F)}}
\newcommand{\mkgspnm}{{K^-_{2n}(\F)}}
\newcommand{\mpb}{{\overline{B^+_{2n}(\F)}}}
\newcommand{\mb}{{\overline{B_{2n}(\F)}}}
\newcommand{\half}{\frac{1}{2}}
\newcommand{\ab} {|}
\newcommand{\gl}{{GL_n(\F)}}
\newcommand{\gln}{{GL_n(\F)}}
\newcommand{\C}{\mathbb C}
\def\>{\rangle}
\def\<{\langle}
\newtheorem{lem}{Lemma}[section]
\newtheorem{thm}{Theorem}[section]
\newtheorem{cor}{Corollary}[section]
\numberwithin{equation}{section}
\newcommand{\msl}{\overline{SL_2(\F)}}
\def\dotunion{
\def\dotunionD{\bigcup\kern-9pt\cdot\kern5pt}
\def\dotunionT{\bigcup\kern-7.5pt\cdot\kern3.5pt}
\mathop{\mathchoice{\dotunionD}{\dotunionT}{}{}}} \setcounter
\begin{document}
\author{Dani Szpruch}
\title {Symmetric genuine Spherical Whittaker functions on $\mgspn$} \maketitle
\begin{abstract}
Let $\F$ be a p-adic field of odd residual characteristic. Let $\mgspn$ and $\mspn$ be the metaplectic double covers of the general symplectic group and the symplectic group attached to the $2n$ dimensional symplectic space over $\F$. Let $\sigma$ be a genuine, possibly reducible, unramified principal series representation of $\mgspn$. In these notes we give an explicit formulas for a spanning set for the space of Spherical Whittaker functions attached to $\sigma$. For odd $n$, and generically for even $n$, this spanning set is a basis. The significant property of this set is that each of its elements is unchanged under the action of the Weyl group of $\mspn$. If $n$ is odd then each element in the set has an equivariant property that generalizes the uniqueness result of Gelbart,  Howe and Piatetski-Shapiro proven in \cite{GHP}. Using this symmetric set, we construct a family of reducible genuine unramified principal series representations which have more then one generic constituent. This family contains all the reducible genuine unramified principal series representations induced from a unitary data and exists only for $n$ even.
\end{abstract}
Mathematics Subject Classification Number:11F85 

\section{Introduction}
Let $\F$ be a p-adic field and let $\Of$ be its ring of integers. Let $\mspn$ be the metaplectic double cover of $\spn$ and let $\mgspn= \mspn \rtimes \F^*$ be the metaplectic double cover of $\gspn$. Let $\mtspn$ and $\mtgspn$ be the inverse images inside $\mgspn$ of $\tspn$ and $\tgspn$, the maximal tours of $\spn$ and $\gspn$ respectively. While $\mtspn$ is commutative, $\mtgspn$ is not. The isomorphism class of a genuine smooth admissible irreducible representation of $\mtgspn$ is determined by its central character. Let $I(\omega)$ be a, possibly reducible,  genuine principal series representation of $\mgspn$ parabolically induced from a representation of $\mtgspn$ whose central character is $\omega$ . Its Whittaker model is not unique. In fact
$$dim \Bigl(Hom_{\mgspn}\bigl(\sigma, Ind^{\mgspn}_{\mzspn}\theta\bigr)\Bigr)=[{\F^*}:{\F^*}^2].$$
Here $\theta$ is a non-degenerate genuine character of $\mzspn$, the inverse image of the maximal unipotent radical of $\spn$. We fix a natural basis, $B(\omega,\theta)$, for this space.

Suppose now that $\F$ has odd residual characteristic. In this case, $\mgspn$ splits over $\kgspn$. While for odd $n$ the embedding of $\kgspn$ inside $\mgspn$ is essentially unique, there are two non conjugate embeddings if $n$ is even. Let $\mkgspn$ be an image of $\kgspn$ inside $\mgspn$. Assume that $I(\omega)$ is spherical, i.e., contains a one dimensional invariant $\mkgspn$ space. Note that this property is independent of the particular embedding of $\kgspn$. In these notes we provide an explicit formulas for $W(\omega,\theta,\eta)$, the set of images of the spherical vector under $B(\omega,\theta)$, i.e., a spanning set for the $\mkgspn$ invariant subspace inside the subspace of $Ind^{\mgspn}_{\overline {N_{2n}(\F)}}\theta$ generated by the images of $\sigma$, see Theorem \ref{main} for the explicit formulas. For odd $n$, and generically for even $n$, $W(\omega,\theta,\eta)$ is a basis. Let $\mtgsppn$ be the centralizer of $\mtspn$ inside $\mtgspn$ and let $E(\omega)$ be the set of extensions of $\omega$ to $E(\omega)$. Each element of $W(\omega,\theta,\eta)$ corresponds to one of the elements of $E(\omega)$ . The important property of the spanning set constructed here is that it consists of symmetric functions, i.e, each element is preserved under the action of the Weyl group of $\mspn$ on the characters of $\mtgsppn$. Given the global functional equation satisfied by Eisenstein series, see Theorem IV.1.10 of [21], this property is meaningful. In the cases where a Weyl element induces a non trivial permutation on $E(\omega)$, $W(\omega,\theta,\eta)$ becomes linearly dependent. We give exact description of these cases which occurs only if $n$ is even. As an application, we construct a family of reducible genuine unramified principal series representations of $\mgspn$ which includes all the reducible genuine unramified principal series representations induced from unitary data. The representations in this family are a direct sum of two non-isomorphic $\theta$-generic $\mgspn$ irreducible submodules, each contains an invariant vector with respect to a different embedding of $\kgspn$.

Our argument in this paper is based on the relation between representation theory of $\mgspn$ and $\mspn$ given in \cite{Sz 5} along with the work of Bump, Friedberg and Hoffstein on the Spherical Whittaker function for $\mspn$. It is the uniqueness of Whittaker model for $\mspn$ which is ultimately responsible for our main result. We shall now outline the principles that enable our results. Although we assume the $\F$ is p-adic field of odd residual characteristic  most of what follows in this introduction, i.e., everything outside the context of unramified representations, applies  2-adic fields also. In fact, it applies to the field of real numbers as well.

For an element $g \in \gspn$ we shall denote by $\lambda(g)$ the similitude character of $g$ and by  $\overline{g}$ its inverse image inside $\mgspn$. We shall define $\lambda(\overline{g})=\lambda(g)$. Let $\gsppn$ be the subgroup of $\gspn$ which consists of elements whose similitude character lies in ${\F^*}^2$. For any subset $\overline{H'}$ of $\mgspn$ denote by $\overline{H^{^+}(\F)}$ and $\overline{H}$ its intersection with $\mgsppn$ and $\mspn$ respectively. We note that the intermediate group $\mgsppn$ introduced here is the same group that appears in the context of Theta correspondence between $\mgspn$ and similitude orthogonal groups attached to certain odd dimensional symmetric forms, see \cite{Ro}. Let $M'(\F)$ be a Levi subgroup of $\gspn$. It is a fact that $$\overline{M^{^+}(\F)}=Z\bigl(\mgsppn \bigr) \overline{M(\F)}.$$ Here $Z\bigl(\mgsppn \bigr)=\overline{\F^* I_{2n}}$ is the center of $\mgsppn$. Furthermore $$Z\bigl(\mgsppn \bigr) \cap \overline{M(\F)}=\overline{\pm I_{2n}}.$$ Thus, given the representation theory of
$\overline{M(\F)}$ the representation theory of $\overline{M^{^+}(\F)}$ is trivial. In particular, the uniqueness of Whittaker model for irreducible smooth admissible genuine representations of $\overline{M^{^+}(\F)}$ follows immediately from the corresponding uniqueness for  $\overline{M(\F)}$ proven in \cite{Sz 1}. On the other hand, assume that $M'(\F)$ is isomorphic to
$$GL_{n_1}(\F) \times GL_{n_2}(\F) \ldots \times GL_{n_r}(\F) \times GSp_{2n_{r+1}}(\F),$$
where at least one of the numbers $n_1,\ldots n_{r+1}$ is odd. This condition is satisfied by any Levi subgroup of $\gspn$, including $\gspn$ itself, provided that $n$ is odd and it is satisfied by $T'_{2n}(\F)$, regardless of the parity of $n$. Let $\pi$ be a genuine irreducible smooth admissible representation of $\overline{M'(\F)}$. It was shown in \cite{Sz 5} that the restriction of $\pi$ to $\overline{M^{^+}(\F)}$ is a multiplicity free direct sum of $[\F^*: {\F^*}^2]$ summands. This is equivalent to the fact that $\overline{M} / \overline{M^{^+}(\F)}$ acts freely and transitively on the set of irreducible $\overline{M^{^+}(\F)}$ modules that appears in $\pi$. If $\pi
\! ^{^+}$ is one of these summands, then $$\pi \simeq Ind_{\overline{M^{^+}(\F)}}^{\overline{M'(\F)}} \pi \! ^{^+}\simeq Ind_{\overline{M^{^+}(\F)}}^{\overline{M'(\F)}} {^m \! (\pi \! ^{^+})},$$
where ${^m \! (\pi \! ^{^+})}$ is a conjugation of $\pi \! ^{^+}$ by an element $m$ of $\overline{M'(\F)}$. Suppose now that $\pi$ is $\theta$-generic. It follows that at least one of the summands in the restriction of $\pi$ to $\overline{M^{^+}(\F)}$ is $\theta$-generic. In fact the dimension of the space of Whittaker functionals on $\pi$ is the number of $\theta$-generic summands. Furthermore, there is a natural way to describe the space of $\theta$-Whittaker functionals on  $\pi$ by means of the unique (up to a constant) $\theta$-Whittaker functionals on one of the $\theta$-generic summands. This also gives a description of the Whittaker functionals on a parabolic induction from $\pi$ to $\mgspn$. We shall describe this in the case $M'(\F)=T'_{2n}(\F).$ In this case $\mtgsppn$ is a maximal Abelian subgroups of $\mtgspn$ and the construction above agrees with the theory of genuine representations of metaplectic tori, see Theorem 3 of \cite{Mc}. It should be noted that  $\mtgsppn$ is not an analog to Kazhdan-Patterson standard maximal Abelian subgroup, i.e., it is not the centralizer of $\mtgspn \cap \mkgspn$ inside $\mtgspn$. The fact that $\mtgsppn$ is the centralizer of $\mtspn$ inside $\mtgspn$, rather then just being a maximal Abelian subgroup, is a crucial property. Thus, any genuine principle series representation of $\mgspn$ may be realized as

$$I\bigl((\chi \boxtimes \xi)_\psi \bigr)=Ind_{\mbgsppn}^{\mgspn} (\chi \boxtimes \xi)_\psi$$ where $(\chi \boxtimes \xi)_\psi$ is a genuine character of $\mtgsppn$, extended to $\mbgsppn= \mtgsppn \ltimes \zspn$, see Section \ref{rep sec} for details. We note here that $\chi$ is character of $\tspn$. For $c \in \F^* / {F^*}^2$, let $I^{c}\bigl((\chi \boxtimes \xi)_\psi \bigr) \subseteq I\bigl((\chi \boxtimes \xi)_\psi \bigr)$ be the subspace of functions whose support is contained in the open set of elements whose similitude character lies in  $c {\F^*}^2$. Obviously, $I^{c}\bigl((\chi \boxtimes \xi)_\psi \bigr)$ is a $\mgsppn$ invariant space and
$$I\bigl((\chi \boxtimes \xi)_\psi \bigr)= \! \! \! \! \! \bigoplus_{c \in \F^* / {F^*}^2} \! \! \!  \! \! I^{c}\bigl((\chi \boxtimes \xi)_\psi \bigr).$$ The crucial fact is that as an $\mspn$ module
$$I^{c}\bigl((\chi \boxtimes \xi)_\psi \bigr) \simeq I\bigl((\chi \eta_c)_\psi \bigr),$$
where $$I\bigl((\chi \eta_c)_\psi \bigr)=Ind_{\mb}^\mspn(\chi \eta_c)_\psi.$$
Here $\chi \eta_c$ is the quadratic twist of $\chi$ attached to $c$ and $(\chi \eta_c)_\psi$ is a genuine character of $\mtspn$, see Section \ref{rep sec} for details. It is now clear that any Whittaker functional on $I\bigl((\chi \boxtimes \xi)_\psi \bigr)$ which vanish on $$I_c \bigl((\chi \boxtimes \xi)_\psi \bigr)= \! \! \! \! \!  \bigoplus_{c \neq d \in \F^* / {F^*}^2} \! \! \! \! \!  I^{d}\bigl((\chi \boxtimes \xi)_\psi \bigr)$$ defines a  Whittaker functional on   $I\bigl((\chi \eta_c)_\psi \bigr)$. Consequently, the space of Whittaker functionals on  $I\bigl((\chi \boxtimes \xi)_\psi \bigr)$ which vanish on $I_c \bigl((\chi \boxtimes \xi)_\psi \bigr)$ is one dimensional. For $c=1$, as a non-trivial element in this space, we can take (the analytic continuation of)
$$\lambda_{(\chi \boxtimes \xi)_\psi,1,\theta}(f)=\int_{N_{2n}(\F)} f(J_{2n}u) \theta^{-1}(u) \, du. $$
where $J_{2n} \in \mspn$ is a representative of the long Weyl element. Suppose now that $t \in \mtgspn$ is such that $\lambda(t)=c$, the map $f \mapsto f_t$ defined by $f_t(g)=f(tg)$ is a $\mgspn$ isomorphism between $I\bigl((\chi \boxtimes \xi)_\psi \bigr)$ and $I \bigl(^t (\chi \boxtimes \xi)_\psi  \bigr)$, where $^t (\chi \boxtimes \xi)_\psi $ is a conjugation of $(\chi \boxtimes \xi)_\psi$ by $t$. This isomorphism maps $\lambda_{(\chi \boxtimes \xi)_\psi,1,\theta}$ to a $\theta_t$ Whittaker functional on $I \bigl(^t (\chi \boxtimes \xi)_\psi \bigr)$ which vanish on $I_c \bigl(^t (\chi \boxtimes \xi)_\psi \bigr) $ . Here $\theta_t$ stands for $n \mapsto \theta({t^{-1}nt})$. At the same time, this map defines a $\theta$ Whittaker functional on $I\bigl((\chi \boxtimes \xi)_\psi \bigr)$ which vanish on $I_c \bigl((\chi \boxtimes \xi)_\psi \bigr)$ namely
$$\lambda_{(\chi \boxtimes \xi)_\psi,t,\theta_t}(f)=\int_{N_{2n}(\F)} f(J_{2n}ut) \theta_t^{-1}(u) \, du. $$

Hence, we obtain a basis for the space of $\theta$-Whittaker functionals on $I\bigl((\chi \boxtimes \xi)_\psi \bigr)$. Note that this basis is defined only up to permutations as it depends on choosing an initial realization and only up normalization since the set of isomorphism above is depended on the choice of representatives of $\mtgspn / \mtgsppn$.

Fix now $W_{2n}$, a set of inverse images inside $\mspn$ of a set of representatives the Weyl group of $\mspn$. $W_{2n}$ is also a set of representatives of $N \bigl(\mtgspn \bigr) / \mtgspn$ where $N \bigl(\mtgspn \bigr)$ stands for the normalizer  of $\mtgspn$ inside $\mgspn$ . Define $N_{2n}^-(\F)$ to be the unipotent radical opposite to $\zspn$. For $w \in W_{2n}$, let $$A_w:I\bigl((\chi \boxtimes \xi)_\psi \bigr) \rightarrow I\bigl((  {^w \! \chi} \boxtimes \xi)_\psi \bigr)$$ be the standard intertwining operator defined by the meromorphic continuation of
$$\bigl(A_w(f)\bigr)(g)=\int_{\zspn \cap(w  N_{2n}^-(\F) w^{-1})} f(wng) \, dn. $$
Note that $A_w$ maps $I^c\bigl((\chi \boxtimes \xi)_\psi \bigr)$ to $I^c\bigl((  ^w \!   \chi \boxtimes \xi)_\psi \bigr)$. Hence,  $\lambda_{(\chi \boxtimes \xi)_\psi,c,\theta}$ and $\lambda_{( ^w \! \chi \boxtimes \xi )_\psi,c,\theta} \circ A_w$ are proportional. This explains why the functional equation satisfied by these Jacquet integrals is diagonal. In fact, for odd $n$, these Jacquet integrals  have a uniqueness property which generalizes the uniqueness of  Whittaker model discussed in \cite{GHP}, see Corollary \ref{n odd stru} and Theorem \ref{Final}.

On a recent paper by Chinta and  Offen, \cite{CO}, the authors used the method of Casselman and Shalika, \cite{CS}, to construct a spanning set of Spherical Whittaker functions for covers of the general linear group. It is clear from their construction that a symmetric spanning set may be constructed by introducing Jacquet integrals whose functional equation is diagonal and then by properly normalizing each integral. To normalize the Jacquet integrals here we use the work of  Bump, Friedberg and Hoffstein. In \cite{BFH} these authors computed the normalization factor for one $\mtspn$ orbit of Whittaker characters. The above exposition explains why is it necessary to extend their result to the other orbits.

For the $n=1$ our result coincides with the recipe given in Section 7 of \cite{CO}. In fact, our work gives a natural explanation to the diagonalization given there. It would be interesting to provide a similar explanation to the higher rank cases studied in \cite{CO}. In the $n=1$ case, $\chi$ is a character of $\F^*$, $\eta_c$ is the quadratic a character of $\F^*$ attached to $c$ and there exists only one non-trivial Weyl element. By comparing the results of \cite{Sz 2} and \cite{Sweet} one can show that the proportion factors in the functional equations mentioned above are
$$\gamma_\F(\psi^{-1})q^{e/2}\frac
{\gamma(\chi^2,0,\psi_2)}{\gamma(\chi \cdot \eta_c,\half,\psi)},$$
where $\gamma_\F(\psi^{-1})$ is the non-normalized Weil index attached to $\psi^{-1}$, $q$ is the size of the residual field, $e=[\Of:2 \Of]$, and $\gamma(\cdot,\cdot,\cdot)$ is the Tate gamma factor. These factors should be thought of as an analog to Shahidi Local coefficients, \cite{Sha 1}, defined in a context where uniqueness of Whittaker model fails. The fact that these coefficients are distinct shows that the diagonalization above is unique up to permutations and normalization.

It should be noted that the proofs given in this paper do not relay on \cite{Sz 5} and do not make explicit use of the uniqueness principles discussed above: Wherever needed we used the theory of principle series representations of covering groups already existing in the literature.  The paper is organized as follows: In Sections \ref{gen not} and \ref{groups} we define some general notation and collect some information on $\mgspn$. In Section \ref{rep sec} we construct the genuine principal series representations and their Whittaker maps. We also discuss in some details the Weyl group action and prove some irreducibility results. In Section \ref{BFH sec} we give an extension of the results of \cite{BFH}. This extension is used in Section \ref{last} where we prove our main result. Finally, in Section \ref{app} we apply the main result to construct the family of reducible genuine unramified principal series representations mentioned above.

I would like thank Freydoon Shahidi for constantly explaining to me his thoughts on the subject matter. In fact, the application given in Section \ref{app} is a part of an on going joint work in which we extend the results of \cite{KS} to $\mgspn$. I would also like to thank Gordan Savin and Omer Offen for sharing with me their insights on the subject.

\section{General notation} \label{gen not}
Let $\F$ be a p-adic field, let $\Of$ be the ring of integers of
$\F$, let $\Pf$ be its maximal ideal and let $\pi$ be a generator of $\Pf$. Let $q$ be the
cardinality of the residue field ${\Of}/{\Pf}$ and let
$\ab \cdot \ab$ be the absolute value on $\F$ normalized in the
usual way. For $a \in \F^*$ we denote  its order by $ord(a)$. Thus,
$ord(a)=-log_q(\ab a \ab).$

Let $(\cdot,\cdot)\f$ be the quadratic Hilbert symbol of $\F$. It is a non-degenerate symmetric bilinear form on $\F^* /{\F^*}^2$. For $a \in \F^*$ we define $\eta_a$ to be the quadratic character of $\F^*$ attached to $a$, that is, $$\eta_a(b)=(a,b)\f.$$
Let $\psi$ be a non-trivial additive character of $\F$. We define its conductor to be the smallest integer such that $\psi$ is trivial on $\Pf^n$. We say that $\psi$ is normalized if its conductor is 0. For $a\in \F^*$ define  $$\psi_a(x)=\psi(ax).$$ It is also a non-trivial additive character of $\F$. For $a\in \F^*$ let
$\gamma_\psi(a) \in \C^{1}$ be the normalized Weil factor associated with
the character of second degree of $\F$ given by $x \mapsto
\psi_a(x^2)$ (see Theorem 2 of Section 14 of \cite{Weil}). It is
known that $\gamma_{\psi}$ is a forth root of unity and that $\gamma_{\psi}\bigl({\F^*}^2\bigr)=1$. Also,
\begin{equation} \label{gammaprop}
\gamma_{\psi}(ab)=\gamma_{\psi}(a)\gamma_{\psi}(b)(a,b)_{\F}, \, \,  \, \,   \gamma_{\psi_b}=\eta_b \cdot \gamma_\psi  ,
\, \,  \, \,  \gamma_{\psi}^{-1}=\gamma_{\psi^{-1}}, \, \,  \, \, \gamma_\psi({\F^*}^2)=1.
\end{equation}
For a diagonal matrix $t$ inside $\gln$ we shall define
$$\gamma_\psi(t)=\gamma_\psi \bigl(det(t)\bigr), \, \, \eta_a(t)=\eta_a\bigl(det(t)\bigr).$$

In Sections \ref{split compact}, \ref{BFH sec}, \ref{last} and \ref{app}, we assume that $\F$ has an odd residual characteristic. In this case, $(\Of^*,\Of^*)\f=1$. Moreover, it follows from Lemma 1.5 of \cite{Sz 2} that if the conductor of $\psi$ is even then $\gamma_\psi(\Of^*)=1$. In the odd residual characteristic case we fix a set of representatives of ${\F^*}/{\F^*}^2$
of the form $\{1,u_{_0},\pi,\pi u_{_0}\},$ where $u_{_0}$ is a non-square element in $\Of^*$, fixed once and for all. The non-degeneracy of the Hilbert symbol implies in this case that \begin{equation} \label{unram quad}\eta_{u_0}(a)=(-1)^{ord(a)} \end{equation}
For p-adic fields of odd residual characteristic, there exists exactly two quadratic characters of $\Of^*$. The non-trivial one is \begin{equation} \label{non trivial eta} u \mapsto\eta_\pi(u)=(u,\pi)\f=\begin{cases}1  & u \in {{\Of^*}\!^2}, \\ -1  &  u \notin {{\Of^*}\!^2} \end{cases}.\end{equation}
Any of the two quadratic characters $\eta$ of $\Of^*$ extends uniquely to a quadratic character of ${\Of^*}{\F^*}^2$. We shall continue to denote this character by $\eta$.

Let $G$ be a group. We denote its center by $Z(G)$. For $h,g \in G$ we denote $h^g=g^{-1}hg$. Let $H$ be a subgroup of $G$. Let $\sigma$ be a representation of $H$. If $g \in G$ normalizes $H$ we denote by $^g\sigma$ the representation of $H$ defined by $h\mapsto \sigma(h^g)$.

\section{Groups} \label{groups}
\subsection{Linear groups}
Let $\gspn$ be the general symplectic group attached to the 2n dimensional symplectic space over $\F$. We shall realize $\gspn$ as the group $$\{g \in GL_{2n}(\F) \mid gJ_{2n}g^t=\lambda(g)J_{2n} \},$$
where
$$J_{2n}=\begin{pmatrix} _{0} & _{I_{n}}\\_{-I_{n} } & _{0}
\end{pmatrix}$$ and $\lambda(g)\in \F^*$ is the similitude factor of $g$. The similitude map $g\mapsto \lambda(g)$ is a rational character of $\gspn$. The kernel of the similitude map is the symplectic group, $\spn$. $\F^*$ is embedded in $\gspx$ via $$\lambda
\mapsto i(\lambda)=\begin{pmatrix} _{I_n} & _{0}\\_{0} & _{\lambda
I_n}\end{pmatrix}.$$ Using this embedding we define an action of
$\F^*$ on $\spx$: $$(g,\lambda) \mapsto
g^{i(\lambda)}.$$ Let $\F^* \ltimes
\spx$ be  the semi-direct product corresponding to this action.
For $g \in \gspx$ define $$g_{_1}=i\bigl(\lambda^{-1}(g)\bigr)g \in \! \spx.$$
 The map $$g \mapsto \bigl(\lambda(g),g_{_1} \bigr)$$ is an isomorphism between
$\gspx$ and $\F^* \ltimes \spx.$
We define $\gsppn$ to be the subgroup of $\gspn$ which consists of elements
whose similitude factor lies in ${\F^*}^2$. $\gsppn$ is a normal subgroup of $\gspn$ which contains $\spn$ . Clearly, $$[\gspn:\gsppn]=[{\F^*}:{\F^*}^2] < \infty.$$
For any $H'$ of $\gspn$ we denote by $H^+$ and $H$ its intersection with $\gsppn$ and $\spn$ respectively.

Let $\zgln$ be the group of upper triangular unipotent matrices in
$\gl$ and let $\zspn$ be the following maximal
unipotent subgroup of $\spn$:
$$\Bigl\{\begin{pmatrix} _{z} & _{b}\\_{0} & _{^t \!{z} \!^ {-1}}
\end{pmatrix}  \mid z\in \zgln, b \in Mat_{n \times n}
(\F), \, b^t=z^{-1}b z^t \Bigr \}.$$ We normalize the Haar measure on $\zspn$ in the usual way. Let $\psi$ be a non trivial character of $\F$. A character $\theta$
of $\zspn$ is called non-degenerate if
$$\theta(z)=\psi \bigr(a_nz_{(n,2n)}+\Sigma_{i=1}^{n-1} a_i z_{(i,i+1)}\bigl),$$
where $a_1,a_2,\ldots,a_{n}$ are elements of $\F^*$. We say that $\theta$ is normalized if $\psi_{a_i}$ is normalized for any $1 \leq 1 \leq n$. We say that $\theta$ agree with $\psi$ on the long root if $a_n=1$.

Let $\tgln$ be the group of diagonal elements inside $\gln$ and let $$\tgspn= \{[t,y]=diag(t,yt^{-1}) \mid y\ \in \F^*, \, \, t\in \tgln \}$$ be the subgroup of diagonal matrices
inside $\gspn$. Note that $\lambda\bigl([t,y]\bigr)=y$. Denote $$\bgspn=\tgspn \ltimes \zspn.$$
It is a Borel subgroup of $\gspn$. Let $\delta$ be the modular function on $\bgspn$, i.e., the ratio between left and right Haar measures.

\subsection{Rao's cocycle and metaplectic groups} \label{des rao}
Let $\mspn$ be the unique non-trivial two-fold cover of $\spn$. The action of $\F^*$ on $\spn$ lifts uniquely to an action of $\F^*$ on $\mspn$, see page 36 of \cite{MVW}. Using this lift we define  $$\mgspn \simeq \F^* \ltimes \mspn,$$ the unique two-fold cover of $\gspn$ which contains $\mspn$.

We shall realize $\mgspn$ as the set  $\mgspn=\gspx \times \{ \pm 1 \}$ equipped with the multiplication law
\begin{equation} \label{metaplectic structure}(g_{1},\epsilon_{1})(g_{2},\epsilon_{2})=
\bigl( g_{1}g_{2},\epsilon_{1}\epsilon_{2}\widetilde{c}(g_{1},g_{2})\bigr)
.\end{equation} Here $$\widetilde{c}:\gspn \times \gspn \rightarrow \{\pm 1\}$$ is the cocycle constructed in Section 2B of \cite{Sz 1} and studied in Section 2 of \cite{Sz 5}.
$\widetilde{c}$ is an extension of Rao's cocycle
$$c:\spn \times \spn \rightarrow \{\pm 1\}$$
constructed in \cite{R}. Hence, the inverse image of $\spn$ inside $\mgspn$ is a realization of $\mspn$. For any subset $H$ of $\gspn$ we denote by $\overline{H}$ its inverse image inside $\mgspn$. Let $H$ be a subgroup of $\gspn$. A representation $\pi$ of $\overline{H}$ is called genuine if it does not factor through the projection map on $H$. Thus, a representation of $\overline{H}$ with a central charter is genuine if and only $\pi(I_{2n},-1)=-Id$ . For $(g,\epsilon) \in \mgspn$ we define $\lambda(g,\epsilon)=\lambda(g).$
\begin{lem} \label{coc prop} The following hold:\\
(1) $\widetilde{c}\bigl( [t,y],[t',y'] \bigr)=\bigl(det(t),y'det(t')\bigr)\f$.\\
(2) $\widetilde{c} \bigl(i(\F^*),\gspn \bigr)x=1$.\\
(3) $\bigl(g,\epsilon \bigr)^{(aI_{2n},\epsilon')}=\bigl(g, \epsilon(\lambda(g),a^n)\bigr)\f$ and $\bigl(aI_{2n},\epsilon \bigr)^{(g,\epsilon')}=\bigl(a, \epsilon(\lambda(g),a^n)\bigr)$.\\
(4) $Z \bigl(\mgspn \bigr)=\begin{cases}\overline{\F^*I_{2n}}  & n \, is \, even, \\ \overline{{\F^*}^2I_{2n}}  &  n \, is \, odd \end{cases}.$\\
(5) $Z \bigl( \mgpspn \bigr)=\overline{\F^*I_{2n}}$.\\
(6) $Z \bigl( \mtgspn \bigr)=\{ \bigl([t,y],\epsilon \bigr) \mid y,\det(t) \in {\F^*}^2 \}$.\\
(7) $\mtspn$ is an abelian group, $\mtgsppn$ is the centralizer of $\mtspn$ inside $\mtgspn$ and it is a maximal Abelian subgroup of $\mtgspn$. Furthermore
\begin{equation} \label{tp st} \mtgsppn=\mtspn Z \bigl( \mgpspn \bigr) \end{equation}
and  $$Z \bigl( \mgpspn \bigr) \cap \mtspn=\overline{\pm I_{2n}}.$$
(8) $\zspn$ embeds into $\mspn$ via the trivial section. $\mtgspn$ normalizes $\bigl(\zspn,1 \bigr)$.\\
\end{lem}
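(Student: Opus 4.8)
The plan is to verify the eight assertions in turn, reducing everything to explicit computations with the cocycle $\widetilde{c}$ and Rao's cocycle $c$, together with the structure of $\gspn$ as $\F^*\ltimes\spn$. For (1), I would use the explicit formula for $\widetilde{c}$ on the diagonal torus recorded in \cite{Sz 1,Sz 5}; writing $[t,y]=i(y)\,\mathrm{diag}(t,t^{-1})$ and noting that $\mathrm{diag}(t,t^{-1})\in\spn$ is a ``standard'' maximal-split-torus element, the value of Rao's cocycle on such pairs is the Hilbert-symbol expression $\bigl(\det t,\det t'\bigr)\f$ up to the contribution of the $i(y),i(y')$ factors, and (2) handles those factors. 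So (1) follows once (2) is in hand. For (2), the claim is that $i(\F^*)$ is central for the cocycle, i.e. $\widetilde{c}\bigl(i(a),g\bigr)=\widetilde{c}\bigl(g,i(a)\bigr)=1$ for all $g$; I would cite this directly from the construction of $\widetilde{c}$ in Section 2B of \cite{Sz 1}, where $i(\F^*)$ is built in precisely so that the extension of Rao's cocycle splits canonically over it — this is the defining feature that makes $\mgspn=\F^*\ltimes\mspn$. (3) is then a direct conjugation computation: $(g,\epsilon)^{(aI_{2n},\epsilon')}=(aI_{2n},\epsilon')^{-1}(g,\epsilon)(aI_{2n},\epsilon')$, expand using \eqref{metaplectic structure}, and the $\epsilon$-discrepancy is exactly $\widetilde{c}(aI_{2n}^{-1},g)\widetilde{c}(g,aI_{2n})$ up to cocycle identities; using part (2) to kill the $i$-part of $aI_{2n}=i(a)\cdot aI_{2n}\cdot\ldots$ — more precisely writing $aI_{2n}$ in terms of the diagonal torus and applying (1) — yields the Hilbert symbol $(\lambda(g),a^n)\f$. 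The exponent $n$ appears because $aI_{2n}$, viewed in the $GL_n$-part of the torus, has determinant $a^n$.

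Granting (3), parts (4)–(7) are structural consequences. For (4): an element of $Z(\mgspn)$ must in particular project to an element of $Z(\gspn)=\F^*I_{2n}$, so it is of the form $(aI_{2n},\epsilon)$; by (3) this is central iff $(\lambda(g),a^n)\f=1$ for all $g\in\gspn$, i.e. iff $a^n$ is a square, which (by non-degeneracy of the Hilbert symbol and surjectivity of $\lambda$) holds for all such $a$ when $n$ is even and forces $a\in{\F^*}^2$ when $n$ is odd. For (5): inside $\mgpspn$ the similitude factor $\lambda(g)$ ranges only over ${\F^*}^2$, so $(\lambda(g),a^n)\f=1$ automatically, giving $Z(\mgpspn)=\overline{\F^*I_{2n}}$. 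For (6): the centralizer condition within $\mtgspn$ again comes down to (3) applied to torus elements together with (1); a short Hilbert-symbol bookkeeping shows $([t,y],\epsilon)$ is central in $\mtgspn$ exactly when $y\in{\F^*}^2$ and $\det t\in{\F^*}^2$. For (7): that $\mtspn$ is abelian is part (1) with $y=y'=1$ (the symbol $(\det t,\det t')\f$ being symmetric — wait, one must check it is actually trivial there; in fact for $\mspn$ the torus cocycle is $(\det t,\det t')\f$ which is \emph{not} symmetric in general, so abelianness needs the genuine statement that $\mtspn$ for the metaplectic $Sp$ is abelian, which is classical, e.g. from \cite{R} or the normalized section). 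Then one checks $\mtgsppn$ centralizes $\mtspn$ using (3): conjugating $\mtspn$ by $i(c)$ for $c\in{\F^*}^2$ introduces the symbol $(\det t, c^{n})\f=1$, and conjugating by $\overline{\F^*I_{2n}}$ introduces $(\lambda,a^n)\f$ which for $\lambda\in\F^*$ arbitrary vanishes only on $\pm I_{2n}$-type restrictions — so the precise statements \eqref{tp st} and $Z(\mgpspn)\cap\mtspn=\overline{\pm I_{2n}}$ follow by comparing indices: $\mtgsppn/\mtspn\cong Z(\mgpspn)/(Z(\mgpspn)\cap\mtspn)$ and a count of $[{\F^*}:{\F^*}^2]$ versus the $2$-torsion pins it down. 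Maximality as an abelian subgroup then follows because any larger abelian group would have to contain an element with $\lambda\notin{\F^*}^2$, contradicting (3)/(6).

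Finally (8): the triviality of the cocycle on $\zspn\times\zspn$ — i.e. that the trivial section $n\mapsto(n,1)$ is a homomorphism on the maximal unipotent — is a standard property of Rao's cocycle \cite{R} (unipotent upper-triangular elements are products of elements for which Rao's $x$-invariant and Leray index vanish), so $(\zspn,1)$ is a genuine-free subgroup isomorphic to $\zspn$; and $\mtgspn$ normalizing it reduces to the fact that $\tgspn$ normalizes $\zspn$ in $\gspn$ together with the cocycle identity $\widetilde{c}(t,n)\widetilde{c}(tn t^{-1},t)=\widetilde{c}(t,nt^{-1}\cdot t)\ldots$ — more cleanly, one checks $(t,\epsilon)(n,1)(t,\epsilon)^{-1}=(tnt^{-1},\,\widetilde{c}(t,n)\widetilde{c}(tnt^{-1},t^{-1})\ldots)$ and shows the sign equals $1$ using that both $n$ and $tnt^{-1}$ lie in $\zspn$, where all relevant cocycle values are $1$. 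The main obstacle I anticipate is part (1)/(2): getting the Hilbert-symbol normalization of $\widetilde{c}$ on the torus exactly right — in particular the asymmetry $(\det t,\,y'\det t')\f$ rather than a symmetric expression — requires careful tracking of the conventions in \cite{Sz 1,Sz 5}, and every subsequent part inherits that normalization, so this is where I would be most careful; once (1)–(3) are nailed down, (4)–(8) are essentially formal.
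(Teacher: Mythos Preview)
Your proposal is essentially correct and follows the same strategy as the paper: cite the explicit cocycle formulas from \cite{Sz 1} (pages 456, 460) and \cite{R} for (1)--(3) and (8), then read off (4)--(7) as structural consequences via Hilbert-symbol bookkeeping. The paper's own proof is in fact terser than yours --- it simply points to those references and to Section~2 of \cite{Sz 5} for full details.

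One point to clean up: in your discussion of (7) you hesitate over whether $\mtspn$ is abelian, writing that the torus cocycle $(\det t,\det t')\f$ ``is \emph{not} symmetric in general.'' This is incorrect --- the Hilbert symbol \emph{is} symmetric, $(a,b)\f=(b,a)\f$, and that symmetry together with the commutativity of $\tgln$ is exactly what makes $\mtspn$ abelian via (1). No appeal to a ``normalized section'' or to \cite{R} beyond the cocycle formula is needed there. Once you fix that, your derivation of (7) from (1) is exactly what the paper does.
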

\begin{proof}
(1) and (2) and (3) follow immediately from the cocycle formulas given in pages 456 and 460 of \cite{Sz 1}. Since $Z(\overline{H}) \subseteq \overline{Z(H)}$ for any subgroup $H$ of $\gspn$, (4) and (5) follow from (3). (6) and (7) follow from (1). We now prove (8): Since $\zspn \subseteq \spn$, the fact that $N_{2n}(\F)$ is embedded inside $\mgspn$ via the trivial section is a property of Rao cocycle, see Corollary 5.5 of \cite{R}. This Corollary also implies that $\mtspn$ normalizes $\bigl(\zspn,1 \bigr)$. Thus, to finish the proof of (8) one only need to show that $\overline{i(\F^*)}$ normalizes $(N_{2n},1)$. This fact may be verified directly from the cocycle formula. All the assertions given in this lemma are proven in more details in Section 2 of \cite{Sz 5}.
\end{proof}
From the last part of this Lemma it follows that if $\theta$ is a character of $\zspn$ then $(z,\epsilon)\mapsto \epsilon \theta(z)$ defines a genuine character of $\mzspn$. We shall continue to denote this character by $\theta$.

\subsection{ Splittings of maximal compact subgroups} \label{split compact}
In this subsection we assume that $\F$ is a p-adic field of odd residual characteristic. In this case, $\mspn$ splits over $\kspn$ and $\mgspn$ splits over $\kgspn$, see Theorem 2 of \cite{Mc}. Furthermore, there exists a unique map $$\iota_{2n}:\kspn \rightarrow \{\pm 1 \},$$ such that $$k \mapsto \kappa_{2n}(k)=\bigl(k,\iota_{2n}(k) \bigr)$$ is an embedding of $\kspn$ inside $\mspn$. It is known that $\iota_{2n}$ is identically 1 on $\bspn \cap  \kspn $, see section 1.4 of \cite{Sz 4}. We shall denote the image of $\kspn$ under this embedding by $\mkspn$. The splitting of $\mgspn$ over $\kgspn$ is not unique. Let $\eta$ be one of the two quadratic characters of $\Of^*$. Define $$\iota^\eta_{2n}:\kgspn \rightarrow \{\pm 1 \},$$ to be the following extension of  $\iota_{2n}$\begin{equation} \label{map one}\iota^\eta_{2n}(k)= \eta\bigl(\lambda(k)\bigr) \iota_{2n} \bigl(k_{_1} \bigr),\end{equation}
Note that these two maps are identically 1 on  $\tgsppn \cap  \kgspn$.
\begin{lem} \label{2 split}
There are exactly two maps  $$\iota'_{2n}:\kgspn \rightarrow \{\pm 1 \},$$ such that $k \mapsto \bigl(k,\iota'_{2n}(k) \bigr)$ is an embedding of $\kgspn$ inside $\mgspn$: These are the two maps defined in \eqref{map one}.  These two embeddings  are conjugated by an element of $\mgspn$ if and only if $n$ is odd.
\end{lem}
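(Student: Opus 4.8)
The plan is to break the statement into two parts: first, that any embedding of $\kgspn$ into $\mgspn$ is given by one of the two maps in \eqref{map one}; and second, the conjugacy criterion. For the first part, I would start with the known splitting $\kappa_{2n}$ of $\kspn$ inside $\mspn$ and the fact (Lemma \ref{coc prop}(2)) that $\overline{i(\F^*)}$ commutes with everything via $i(\F^*)$ in the relevant sense, i.e., that $\widetilde{c}$ restricted to $i(\F^*) \times \gspn$ is trivial. Given the isomorphism $\gspx \simeq \F^* \ltimes \spx$, an element $k \in \kgspn$ decomposes as $i(\lambda(k))$ times $k_{_1} \in \kspn$; so a candidate section $\iota'_{2n}$ is forced, modulo a choice of character on $\Of^*$, to take the form $\iota'_{2n}(k) = \mu(\lambda(k)) \iota_{2n}(k_{_1})$ for some function $\mu : \Of^* \to \{\pm 1\}$. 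The homomorphism property of $k \mapsto (k, \iota'_{2n}(k))$, combined with the cocycle formulas and part (3) of Lemma \ref{coc prop}, should force $\mu$ to be a homomorphism, hence one of the two quadratic characters of $\Of^*$ (using that $(\Of^*, \Of^*)\f = 1$ and that $\kgspn$ is generated by $\kspn$ and $i(\Of^*)$ together with the compact torus). This gives exactly the two maps.

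For the conjugacy criterion, I would conjugate $\mkgspn$ (with section $\iota^\eta_{2n}$ for one choice of $\eta$) by an element $(g,\epsilon) \in \mgspn$ and compute the resulting section on $\kgspn$. The conjugation acts trivially on the $\kspn$ part (since $\iota_{2n}$ is the unique splitting of $\mspn$ over $\kspn$, which is already canonical), so the only thing that can change is the character $\eta$ on $\Of^*$, which gets twisted by $u \mapsto (\lambda(g), u^n)\f$ by Lemma \ref{coc prop}(3). When $n$ is even this twist is trivial (since $u^n \in {\F^*}^2$), so conjugation can never swap the two embeddings. When $n$ is odd, the twist is $u \mapsto (\lambda(g), u)\f = \eta_{\lambda(g)}(u)$, which is the non-trivial quadratic character of $\Of^*$ precisely when $\lambda(g)$ is a non-square; choosing $g$ with $\lambda(g) \notin {\F^*}^2$ (e.g. $g = i(\pi)$) then conjugates one embedding to the other. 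This completes both directions.

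I would need to be careful about one subtlety: to conclude that conjugation fixes the $\kspn$-part of the section exactly (not up to a sign depending on $k_{_1}$), I should use that the restriction of any automorphism of $\mspn$ lifting the identity on $\spn$ to $\kspn$ must preserve the unique splitting — this is where the uniqueness of $\iota_{2n}$ (cited from \cite{Mc} and \cite{Sz 4}) does the work. Alternatively, one can argue directly: an inner automorphism of $\mgspn$ restricted to $\overline{\kspn}$ is an automorphism covering conjugation on $\kspn \subseteq \spn$, and since $\iota_{2n}$ is characterized as the unique homomorphic section over $\kspn$, its image is preserved setwise, and since $\kspn$ is perfect-enough (or by directly tracking $\widetilde c$), the section itself is unchanged.

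The main obstacle I expect is the first part — pinning down that the only sections are the two listed ones. The conjugacy statement is essentially a one-line cocycle computation once part (1) is in hand. For part (1), the delicate point is handling the compact torus $\tgsppn \cap \kgspn$ and verifying that no "twisted" section survives there; the hint in the text that both maps in \eqref{map one} are identically $1$ on $\tgsppn \cap \kgspn$ suggests that the torus imposes no extra freedom and that the freedom is genuinely just the choice of $\eta$ on $\Of^* / {\Of^*}^2$. I would organize the proof so that this torus computation is isolated and done once, then bootstrap to all of $\kgspn$ using that $\kgspn = \kspn \cdot (\tgsppn \cap \kgspn) \cdot i(\Of^*)$ as a set, or more cleanly that $\kgspn$ is generated by $\kspn$ and $i(\Of^*)$.
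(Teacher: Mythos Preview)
Your Part 1 (classification of sections) matches the paper's argument: decompose $k = i(\lambda(k))\,k_{_1}$, use $\widetilde c\bigl(i(\F^*),\gspn\bigr)=1$ from Lemma~\ref{coc prop}(2) to see that the freedom is exactly a quadratic character of $\Of^*$.

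For Part 2, two issues. First, your twist formula ``$u \mapsto (\lambda(g), u^n)_\F$'' is not what Lemma~\ref{coc prop}(3) says, and your suggested conjugator $i(\pi)$ is not covered by that lemma at all. Lemma~\ref{coc prop}(3) concerns conjugation by scalar matrices $aI_{2n}$, and gives $(k,\iota'_{2n}(k))^{(aI_{2n},1)}=\bigl(k,\iota'_{2n}(k)(\lambda(k),a^n)_\F\bigr)$. The paper takes $a=\pi$: for $n$ odd this twist is $\eta_\pi(\lambda(k))$, which swaps the two sections. Note that with your formula and $g=aI_{2n}$ one has $\lambda(g)=a^2$, so your expression would be identically $1$ even for $n$ odd.

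Second, and more seriously, your $n$-even argument has a real gap. Even with the corrected formula, Lemma~\ref{coc prop}(3) only tells you what conjugation by \emph{scalar} matrices does; it says nothing about a general $(g,\epsilon)$ normalizing $\kgspn$. To prove non-conjugacy you must rule out all conjugating elements. The paper handles this by a normalizer computation you are missing: from the Cartan decomposition, the normalizer of $\kgspn$ in $\gspn$ is $Z(\gspn)\,\kgspn$. Since conjugation by anything in $\overline{\kgspn}$ (the sign being irrelevant) is an inner automorphism of each $K^\eta$ and hence fixes it, one may assume the conjugating element lies in $\overline{Z(\gspn)}$. For $n$ even, Lemma~\ref{coc prop}(4) gives $\overline{Z(\gspn)}=Z\bigl(\mgspn\bigr)$, so conjugation is trivial and the two embeddings cannot be exchanged. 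This reduction to the center is the step your outline lacks.
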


\begin{proof}

The restriction of $\iota'_{2n}$ to $\kspn$ must agree with $\iota_{2n}$. Thus $$\iota'_{2n}(k)= \iota'_{2n}\bigl(i(\lambda(k)\bigr) \iota_{2n}(k_{_1})\widetilde{c}\bigl(i(\lambda(k),k_{_1}\bigr).$$
Part 2 of Lemma \ref{coc prop} implies now that the restriction of $\iota'_{2n}$ to $i(\Of^*)$ must be a quadratic character and that
 $$\iota'_{2n}(k)= \iota'_{2n}\bigl(i(\lambda(k)\bigr) \iota_{2n}(k_{_1}).$$
This shows that $\iota'_{2n}$ must be one of the two maps given in \eqref{map one}. On the other hand, if $\iota'_{2n}$ is a splitting map, so does $\iota'_{2n} \cdot (\eta_\pi \! \circ \! \lambda)$.

Let $\iota'_{2n}$ be any of the two splittings given in \eqref{map one}. Suppose that $n$ is odd. By part 3 of Lemma \ref{coc prop}, $$\bigl(k,\iota'_{2n}(k)\bigr)^{(\pi I_{2n},1)}=\bigl(k,\iota'_{2n}(k) \eta_{\pi}(\lambda(k))\bigr).$$

Hence, if $n$ is odd, the two embeddings constructed here are conjugated. It remains to show that if $n$ is even then these two are not conjugated. From the Cartan decomposition it follows that the  normalizer of $\kgspn$ inside $\gspn$ is $Z\bigr(\gspn \bigl) \kgspn.$ Since the inner conjugation map of $\mgspn$, $$\sigma \mapsto \sigma^{(g,\epsilon)}$$ is independent of $\epsilon$,  it follows that if the two embeddings mentioned above are conjugated then a conjugating element must lie in $\overline{Z \bigl(\gspn \bigr)}$. However, if $n$ is even then by Lemma \ref{coc prop}, $ \overline{Z \bigl(\gspn \bigr)}=Z \bigl(\mgspn \bigr).$
\end{proof}
From this point we assume that $\kgspn$ is embedded in $\mgspn$ via $$k \mapsto \bigl(k,\iota^\eta_{2n}(k) \bigr),$$
where $\eta$ is one of the two characters above. We shall denote the image of $\kgspn$ under this embedding by $\mkgspn$.

\section{Representations} \label{rep sec}
\subsection{Genuine Principal series representations}

\begin{lem} \label{ind rep}Let $\chi$ be a character of the diagonal subgroup inside $\gln$ and let $\xi$ be a character of $\F^*$. \\
(1) The map \begin{equation} \label{mtspn char} \bigl([t,1],\epsilon \bigr) \mapsto \chi_\psi\bigl([t,1],\epsilon \bigr)=\epsilon \chi(t)\gamma_\psi(t)\end{equation} defines a genuine character of $\mtspn$ and all genuine characters of $\mtspn$ have this form. Also,
\begin{equation} \label{psi cha} \chi_{\psi_a}=(\chi \cdot \eta_a)_\psi. \end{equation}
(2) The map $$(aI_{2n},\epsilon) \mapsto \xi_\psi(aI_{2n},\epsilon)=\epsilon \xi(a)\gamma_\psi(a^n)$$
defines a genuine character of $Z \bigl( \mgpspn \bigr)$ and all genuine characters of $Z \bigl( \mgpspn \bigr)$ have this form.\\
(3) Assume in addition that $\xi(-1)=\chi(-I_n)$ or equivalently that $\xi_\psi$ and $\chi_\psi$ agree on  $Z \bigl( \mgpspn \bigr) \cap \mtspn.$
Then, we may define a genuine character of $\mtgsppn$ by
$$h=zg \mapsto (\chi \boxtimes \xi)_\psi (zg)=\chi_\psi(g)\xi_\psi(z).$$
Here $g \in \mtspn$ and $z \in Z \bigl( \mgpspn \bigr)$. All the genuine characters of $\mtgsppn$ have this form.\\
(4) $(\chi \boxtimes \xi)_\psi$ and $(\chi' \boxtimes \xi')_\psi$ have the same restriction to $Z\bigl(\mtgspn \bigr)$ if and only if they are conjugated by an element of $\mtgspn$ in which case \begin{equation} \label{model cha}(\chi' \boxtimes \xi')_\psi=(\chi \cdot \eta_c \boxtimes \xi\cdot \eta_c^n)_\psi=^{(i(c),1)} \! \! \bigl((\chi \boxtimes \xi)_\psi \bigr) \end{equation}
For some $c\in\F^*$.
\end{lem}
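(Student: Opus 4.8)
\emph{Proof proposal.} The plan is to make the $\mtgspn$-conjugation action on genuine characters of $\mtgsppn$ completely explicit, and then to check that restriction to $Z\bigl(\mtgspn\bigr)$ separates the conjugacy classes. Since $\lambda\bigl(i(c)\bigr)=c$, the elements $\bigl(i(c),1\bigr)$, with $c$ running over a set of representatives of $\F^*/{\F^*}^2$, form a transversal for $\mtgsppn$ in $\mtgspn$; as $\mtgsppn$ is abelian (Lemma \ref{coc prop}(7)), conjugation by its own elements fixes any character of it, so every $\mtgspn$-conjugate of $(\chi\boxtimes\xi)_\psi$ equals $^{(i(c),1)}\!\bigl((\chi\boxtimes\xi)_\psi\bigr)$ for some $c$. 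To identify this conjugate I would restrict it to the two subgroups $\mtspn$ and $Z\bigl(\mgpspn\bigr)$, which together generate $\mtgsppn$ (Lemma \ref{coc prop}(7)). Unwinding $\bigl(i(c),1\bigr)^{-1}\bigl([t,1],\epsilon\bigr)\bigl(i(c),1\bigr)$ with parts (1) and (2) of Lemma \ref{coc prop} gives $\bigl([t,1],\epsilon\bigr)^{(i(c),1)}=\bigl([t,1],(c,\det t)_\F\,\epsilon\bigr)$, so the conjugate restricts to $(\chi\eta_c)_\psi$ on $\mtspn$; and part (3) of Lemma \ref{coc prop} gives $\bigl(aI_{2n},\epsilon\bigr)^{(i(c),1)}=\bigl(aI_{2n},(c,a^n)_\F\,\epsilon\bigr)$, so it restricts to $(\xi\eta_c^n)_\psi$ on $Z\bigl(\mgpspn\bigr)$. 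Since by part (3) of this lemma a genuine character of $\mtgsppn$ is determined by its restrictions to these two subgroups, we get $^{(i(c),1)}\!\bigl((\chi\boxtimes\xi)_\psi\bigr)=(\chi\eta_c\boxtimes\xi\eta_c^n)_\psi$.

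\emph{The two implications.} If the two characters are $\mtgspn$-conjugate, then because $Z\bigl(\mtgspn\bigr)\subseteq\mtgsppn$ consists of elements central in $\mtgspn$ (Lemma \ref{coc prop}(6)), they agree on $Z\bigl(\mtgspn\bigr)$, and the displayed formula follows from the previous paragraph. Conversely, suppose they agree on $Z\bigl(\mtgspn\bigr)$. The quotient $\zeta:=(\chi'\boxtimes\xi')_\psi\cdot(\chi\boxtimes\xi)_\psi^{-1}$ is a character of $\mtgsppn$ trivial on $\bigl(I_{2n},-1\bigr)$, hence descends to a character of $\tgsppn$, and by hypothesis it is trivial on the image of $Z\bigl(\mtgspn\bigr)$, which by Lemma \ref{coc prop}(6) equals $\{[t,y]\in\tgsppn\mid\det t\in{\F^*}^2\}$. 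Since $[t,y]\mapsto\det t\bmod{\F^*}^2$ is a surjective homomorphism $\tgsppn\to\F^*/{\F^*}^2$ whose kernel is exactly that subgroup, $\zeta$ must be of the form $[t,y]\mapsto(c,\det t)_\F=\eta_c(\det t)$ for some $c\in\F^*$. Evaluating $(\chi'\boxtimes\xi')_\psi=\zeta\cdot(\chi\boxtimes\xi)_\psi$ on $\mtspn$ and on $Z\bigl(\mgpspn\bigr)$ then gives $\chi'=\chi\eta_c$ and $\xi'=\xi\eta_c^n$, i.e.\ $(\chi'\boxtimes\xi')_\psi=(\chi\eta_c\boxtimes\xi\eta_c^n)_\psi$, which by the first paragraph is $^{(i(c),1)}\!\bigl((\chi\boxtimes\xi)_\psi\bigr)$; hence the two characters are conjugate by $\bigl(i(c),1\bigr)\in\mtgspn$.

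\emph{Main obstacle.} The one step needing genuine care is the conjugation computation in the first paragraph: one must track the $\widetilde{c}$-corrections (Lemma \ref{coc prop}(1)--(3)) together with the Weil indices $\gamma_\psi$ entering \eqref{mtspn char} and verify that they recombine into $\eta_c\circ\det$ on \emph{both} generating subgroups simultaneously, with the \emph{same} $c$. Everything else is formal; the structural point making the converse work is that $Z\bigl(\mtgspn\bigr)$ is large enough (Lemma \ref{coc prop}(6)) that $\tgsppn$ modulo the image of $Z\bigl(\mtgspn\bigr)$ is just $\F^*/{\F^*}^2$, which is precisely what pins $\zeta$ down to a single quadratic character $\eta_c$.
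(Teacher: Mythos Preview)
Your argument for part (4) is correct. The conjugation computation and the forward implication match the paper's; the difference is in the converse. The paper argues by counting: since $[\mtgsppn:Z(\mtgspn)]=[\F^*:{\F^*}^2]$, a given restriction to $Z(\mtgspn)$ has exactly $[\F^*:{\F^*}^2]$ extensions to $\mtgsppn$, and the conjugates $^{(i(c),1)}\!\bigl((\chi\boxtimes\xi)_\psi\bigr)$ for $c$ ranging over $\F^*/{\F^*}^2$ are pairwise distinct (because $\eta_c$ is nontrivial for $c\notin{\F^*}^2$), so they exhaust all extensions. You instead form the ratio $\zeta$, observe it descends to a character of $\tgsppn/\{\det t\in{\F^*}^2\}\simeq\F^*/{\F^*}^2$, and invoke nondegeneracy of the Hilbert symbol to write $\zeta=\eta_c\circ\det$. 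Your route is more explicit (it produces the conjugating element directly); the paper's is a line shorter. They are equivalent reformulations of the same index computation.
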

\begin{proof} The fact that \eqref{mtspn char} defines a genuine character of $\mtspn$ follows from the first part of Lemma \ref{coc prop} and from \eqref{gammaprop}. Since the product of any two genuine characters of $\mtspn$ is a non-genuine character of $\mtspn$ and since any non-genuine character of $\mtspn$ may be viewed as a character $\tspn$, the first two assertions in (1) follow. \eqref{psi cha} also follows now from \eqref{gammaprop}. (2) is proven by similar arguments. (3) follows from (1), (2) and from the seventh part of Lemma \ref{coc prop}. The equality between the middle and right wings of \eqref{model cha} follows from the first part of  Lemma \ref{coc prop}. The fact the any two genuine characters of $\mtgsppn$ which satisfy the relation given in \eqref{model cha} have the same restriction to $Z\bigl(\mtgspn \bigr)$  follows from (6) and (7) of
Lemma \ref{coc prop}. Note now that $[\mtgsppn:Z\bigl(\mtgspn \bigr)]=[{\F^*}:{\F^*}^2]$ and that
$$\{(i(c),1) \mid c \in {\F^*}/{\F^*}^2 \}$$ is a set of representatives for $\mtgspn / \mtgsppn$. The proof of (4) is finished once we note that $c \notin {\F^*}^2$ implies that
$$(\chi \boxtimes \xi)_\psi \neq (\chi \cdot \eta_c \boxtimes \xi\cdot \eta_c^n)_\psi.$$

\end{proof}
Let $\omega$ be a genuine character of $Z\bigl(\mtgspn \bigr)$ we shall denote by $E(\omega)$ the set of its extensions to $\mtgsppn$. The group $\mtgspn$ is not Abelian. Its smooth admissible irreducible representations have the following form:
\begin{lem} \label{mc me} The isomorphism class of a smooth admissible genuine irreducible representation of $\mtgspn$ is determined by its central character. Any smooth admissible genuine irreducible representation of $\mtgspn$ is $[\F^*:{\F^*}^2]$ dimensional. It may be realized as an induction from a genuine character of $\mtgsppn$. If $\varphi$ and $\varphi'$ are two genuine characters of
$\mtgsppn$ then the corresponding inductions on $\mtgspn$ are isomorphic if and only if $\varphi, \varphi' \in E(\omega)$, where $\omega$ is a genuine character of $Z\bigl(\mtgspn \bigr)$.
\end{lem}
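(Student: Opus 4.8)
The plan is to run the standard Clifford--Mackey argument for the abelian normal subgroup $A:=\mtgsppn$ of $G:=\mtgspn$. The two structural facts I would invoke at the outset are: (i) $A$ is normal of finite index $m:=[\F^*:{\F^*}^2]$ in $G$ --- normality because $\mtspn=\mspn\cap\mtgspn$ is normal in $G$ (as $\mspn$ is normal in $\mgspn$) and $A$ is its centralizer by Lemma \ref{coc prop}(7), and finite index with $\{(i(c),1):c\in\F^*/{\F^*}^2\}$ a transversal, as recorded in the proof of Lemma \ref{ind rep}; and (ii) $Z(G)\subseteq A$ with $[A:Z(G)]=m$ (also recorded there), while by Lemma \ref{ind rep}(4) the conjugation action of $G/A$ on the set of genuine characters of $A$ is \emph{free}, with orbits exactly the fibres of restriction to $Z(G)$ --- so for each genuine character $\omega$ of $Z(G)$ the set $E(\omega)$ is a single free $G$-orbit of size $m$.

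First I would treat the induced modules. For a genuine character $\varphi$ of $A$, since $A$ is open of finite index in the locally profinite group $G$, $Ind_A^G\varphi$ is an honest $m$-dimensional genuine representation, hence admissible, and Mackey's restriction formula for the normal subgroup $A$ gives $Res_A\, Ind_A^G\varphi\cong\bigoplus_{g\in G/A}{}^g\varphi$, a multiplicity-free sum of the $m$ distinct characters in the orbit of $\varphi$. Because $G$ permutes these characters transitively, any nonzero $G$-submodule restricts on $A$ to the whole sum; hence $Ind_A^G\varphi$ is irreducible. Its central character is $\varphi|_{Z(G)}$, and $Ind_A^G({}^g\varphi)\cong Ind_A^G\varphi$ for any $g\in G$. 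Combined with Lemma \ref{ind rep}(4) this already yields the last assertion of the lemma: $Ind_A^G\varphi\cong Ind_A^G\varphi'$ forces $\varphi|_{Z(G)}=\varphi'|_{Z(G)}=:\omega$, i.e. $\varphi,\varphi'\in E(\omega)$; conversely, if $\varphi,\varphi'\in E(\omega)$ then they are $G$-conjugate, so the inductions are isomorphic.

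Next I would show that every smooth admissible genuine irreducible $(\pi,V)$ of $G$ is of this form. By Schur's lemma for admissible representations, $Z(G)$ acts on $V$ by a genuine character $\omega$. Pick $0\neq v\in V$; since $A$ is generated by $Z(G)$ together with a transversal of $A/Z(G)$ (finite, of order $m$) and $Z(G)$ acts by scalars, the span $\C[A]v$ is $A$-stable of dimension $\leq m$, so it contains an $A$-eigenvector, i.e. some genuine character $\varphi$ of $A$ embeds in $Res_A\,\pi$, necessarily with $\varphi|_{Z(G)}=\omega$. Frobenius reciprocity gives $Hom_G(Ind_A^G\varphi,\pi)\neq 0$, and both sides being irreducible forces $\pi\cong Ind_A^G\varphi$. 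This shows $\pi$ is $m$-dimensional and realized as an induction from a genuine character of $A$; and if $\pi'$ is another irreducible with the same central character $\omega$, then $\pi'\cong Ind_A^G\varphi'$ with $\varphi'\in E(\omega)$, so $\varphi'$ is $G$-conjugate to $\varphi$ and $\pi\cong\pi'$, proving that the central character determines the isomorphism class.

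The only delicate point --- the step I would flag as the main obstacle --- is the passage from the abstract, a priori infinite-dimensional module $V$ to a finite-dimensional $A$-stable subspace carrying an eigenvector; this is exactly where admissibility (through $Z(G)$ acting by a scalar character) and the finiteness $[A:Z(G)]=m$ are genuinely used. Everything else is bookkeeping with Lemmas \ref{coc prop} and \ref{ind rep} together with the standard properties of induction from an open finite-index subgroup of a locally profinite group.
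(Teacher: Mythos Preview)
Your argument is correct. The paper's own ``proof'' simply cites this as a variant of the Stone--von Neumann theorem (Theorem~3 of \cite{Mc}), recording only the two structural inputs: that $\mtgspn$ is two-step nilpotent and that $\mtgsppn$ is a maximal abelian subgroup. What you have written is precisely the standard proof of that Stone--von Neumann statement in this setting, carried out by hand via Clifford--Mackey; the freeness of the $G/A$-action on $E(\omega)$ that you extract from Lemma~\ref{ind rep}(4) is exactly the consequence of maximality of $A$ that makes $Ind_A^G\varphi$ irreducible, and the bound $\dim \C[A]v\le [A:Z(G)]$ is exactly where the two-step nilpotence (i.e.\ $Z(G)$ acting by a scalar and $[A:Z(G)]<\infty$) enters. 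So the approaches coincide in substance; you have simply supplied the argument the paper leaves to a reference.
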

\begin{proof} This is a variation of Stone-von Neumann Theorem, see Theorem 3 of \cite{Mc}. The crucial facts here are that $\mtgspn$ is a two step nilpotent group and that $\mtgsppn$ is a maximal Abelian subgroup.
\end{proof}

We shall denote by $\tau(\omega)$ the isomorphism class of irreducible admissible genuine representation of $\mtgspn$ whose central character is $\omega$. We denote its realization mentioned in the last Theorem by $$i \bigl((\chi \boxtimes \xi)_\psi \bigr)=Ind_{\mtgsppn} ^{\mtgspn}(\chi \boxtimes \xi)_\psi.$$

Due to part $8$ of Lemma \ref{coc prop} we shall regard, as in the linear case, each character (irreducible smooth admissible representation) of $\mtspn$ $ \bigl( \mtgspn \bigr)$ as a character (representation) of $\mbspn$ $ \bigl( \mbgspn \bigr)$ by extending it trivially on $(N_{2n}(\F),1)$. We shall always assume that a character (representation) of $\mbspn$ $\bigl( \mbgspn \bigr)$ has this form.
Genuine principal series representation of $\mspn$ $ \bigl( \mgspn \bigr)$ is a representation parabolically induced from a genuine character (irreducible smooth admissible genuine representation) of $\mbspn$ $\bigl( \mbgspn \bigr)$. We shall assume that the induction is normalized. We denote
$$I(\chi_\psi)=Ind^\mspn_\mbspn \chi _{\psi}.$$
For a genuine character $\omega$ of $Z \bigl( \mtgspn \bigr)$ we denote by $I(\omega)$ the isomorphism class of genuine principal series representation of $\mgspn$ induced from $\tau(\omega)$. By Lemma \ref{mc me}, $I(\omega)$ may be realized as $$I \bigl((\chi \boxtimes \xi)_\psi \bigr)=Ind_\mbgsppn ^{\mgspn}(\chi \boxtimes \xi)_\psi,$$
where $(\chi \boxtimes \xi)_\psi$ is any of the $[F^*:{F^*}^2]$ elements of  $E(\omega)$.

\subsection{Weyl group action}

Let $W_{2n}$ be the Weyl group of $\spn$. We choose representatives of $W_{2n}$ inside $\kspn$ and we take their images inside $\mkspn$. We shall continue do denote the outcome by $W_{2n}$. By (1.18) of Section 1.3 of \cite{Sz 4},
\begin{equation} \label{w on t} (t,1)^w=(t^w,1) \end{equation}
for any $t \in \tspn$ and $w \in W_{2n}$. In fact, \eqref{tp st} implies that \eqref{w on t} holds for all $t \in \tgsppn$. Thus,
$$^w (\chi_\psi)= \bigl({^w \! \chi})_\psi$$
and
\begin{equation} \label{to compare} ^w (\chi \boxtimes \xi)_\psi= ({^w \!\chi} \boxtimes \xi)_\psi\end{equation}
For any $w \in W_{2n}$. Here $^w \! \chi$ is defined via a similar action of $W_{2n}$ on $\tspn$ and its characters. Note that $W_{2n}$ also acts on the set of genuine charters of $Z\bigl(\mtgspn \bigr)$ and hence acts on isomorphism classes of genuine irreducible admissible representations of $\mtspn$. Clearly, $E(^w \! \omega)={^w \!E(\omega)}$ for any $w \in W_{2n}$. Given $\omega$, a genuine character of $Z\bigl(\mtgspn \bigr)$, fix  $(\chi \boxtimes \xi)_\psi \in E(\omega)$ and define $R(\omega)$ to be the the following subgroup of $\F^* /{\F^*}^2$:

$$R(\omega)=\{ c\in \F^* /{\F^*}^2 \mid \exists \omega \in W_{2n}. \, ^{(i(c),1)} \! (\chi \boxtimes \xi)_\psi={^{w} \! (\chi \boxtimes \xi)_\psi} \}.$$
Clearly, $R(\omega)$ is well defined, i.e., does not depend on the particular choice of $(\chi \boxtimes \xi)_\psi \in E(\omega)$. $R(\omega)$ is non-trivial if an only if there exits $w \in W_{2n}$ which fixes $\omega$ and induces a non-trivial permutation on $E(\omega)$.
\begin{lem} \label{act on E} Let $\omega$ be a genuine character of $Z\bigl(\mtgspn \bigr)$\\
(1) If $n$ is odd and $^w \! \omega = \omega$ for some  $w \in W_{2n}$ then $w$ preserves the elements of $E(\omega)$ pointwise. In particular, $R(\omega)$ is trivial for any $\omega$.\\
(2) If $n$ is even then there exists a genuine character $\omega_{_0}$ of $Z\bigl(\mtgspn \bigr)$ such that $R(\omega_{_0})$ is non-trivial. Furthermore, up to conjugation by $w \in W$, an element of $E(\omega_{_0})$ has the from
$(\chi_{_0} \boxtimes \xi)_\psi$  where
\begin{equation} \label{r not tri} \chi_{_0}[diag(t_n,t_{n-1},\ldots,t_1),1]=\chi_1(t_1t_2)\chi_2(t_3,t_4)\ldots \chi_{\frac n 2}(t_{n-1}t_n)\eta_c(t_2t_4\ldots t_{n}). \end{equation}
Here $\chi_1,...\chi_{\frac n 2}$ are characters of $F^*$ and $c$ is a non-square element in $\F^*$.

\end{lem}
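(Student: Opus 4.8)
The plan is to reduce everything to the explicit action of the Weyl group $W_{2n}$ of $\spn$ on genuine characters of $\mtspn$, using the dictionary furnished by Lemma \ref{coc prop} and Lemma \ref{ind rep}. Recall from \eqref{to compare} that $^w(\chi \boxtimes \xi)_\psi = ({^w\!\chi}\boxtimes \xi)_\psi$, so a Weyl element acts only through $\chi \mapsto {^w\!\chi}$; and from \eqref{model cha} that the various elements of $E(\omega)$ sitting above a fixed $\omega$ are exactly the $(\chi\cdot\eta_c \boxtimes \xi\cdot\eta_c^n)_\psi$ for $c \in \F^*/{\F^*}^2$. Thus $R(\omega)$ is nontrivial precisely when there is a $w \in W_{2n}$ and a non-square $c$ with ${^w\!\chi} = \chi\cdot\eta_c$ (as characters of $\tspn$) while simultaneously $\xi = \xi\cdot\eta_c^n$, i.e. $\eta_c^n = 1$, and $^w\!\omega = \omega$. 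Since $\eta_c$ has order $2$, the condition $\eta_c^n=1$ is automatic when $n$ is even and fails (for non-square $c$) when $n$ is odd. This is the crux of the parity dichotomy and already proves the ``in particular'' clause of (1): for $n$ odd, $^w\!\omega=\omega$ together with ${^w\!\chi}=\chi\cdot\eta_c$ (which must hold for \emph{some} $c$ since $^w(\chi\boxtimes\xi)_\psi$ and $(\chi\boxtimes\xi)_\psi$ agree on the center) forces $\eta_c^n \ne 1$ unless $c$ is a square, hence $c$ is a square and $w$ fixes $(\chi\boxtimes\xi)_\psi$.

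For part (1) I still need the full statement that $w$ preserves $E(\omega)$ pointwise, not merely that $R(\omega)$ is trivial for the chosen base point. But these are equivalent: if $\varphi' \in E(\omega)$ is arbitrary, write $\varphi' = {^{(i(d),1)}}\varphi$ for some $d$ by \eqref{model cha}; then $^w\!\varphi' = {^w}({^{(i(d),1)}}\varphi) = {^{(i(d),1)}}({^w\!\varphi}) = {^{(i(d),1)}}\varphi = \varphi'$, using that the $W_{2n}$-action and the $(i(d),1)$-conjugation commute on characters of $\mtgsppn$ (both act diagonally on the torus coordinates, this is a direct check from Lemma \ref{coc prop}(1),(3) and \eqref{w on t}). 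So the reduction to a single base point is legitimate, and (1) follows.

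For part (2), with $n$ even, the task is to exhibit an explicit $\omega_0$ (equivalently an explicit $(\chi_0\boxtimes\xi)_\psi$) and a Weyl element $w$ realizing a nontrivial permutation of $E(\omega_0)$. The natural choice is to take $\chi_0$ to be a character of $\tgln$ that is ``blockwise symmetric'' under pairing up coordinates $(t_1,t_2),(t_3,t_4),\dots,(t_{n-1},t_n)$ — this is the role of the factors $\chi_1(t_1 t_2),\dots,\chi_{n/2}(t_{n-1}t_n)$ in \eqref{r not tri} — but twisted by $\eta_c(t_2 t_4 \cdots t_n)$ with $c$ non-square. I would then take $w$ to be the product of the simple reflections swapping the two entries within each pair $(t_{2i-1},t_{2i})$ (equivalently, for the given ordering $\mathrm{diag}(t_n,\dots,t_1)$, the appropriate sign-change-free transpositions in the Weyl group of $Sp_{2n}$; one must be slightly careful that these are genuine torus-normalizing elements, which they are since they lie in $\kspn$ and \eqref{w on t} applies). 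Applying $w$ to $\chi_0$ swaps $t_{2i-1}\leftrightarrow t_{2i}$ in every pair: the symmetric part $\chi_i(t_{2i-1}t_{2i})$ is unchanged, while $\eta_c(t_2 t_4\cdots t_n)$ becomes $\eta_c(t_1 t_3 \cdots t_{n-1}) = \eta_c(t_1 t_2 \cdots t_n)\cdot\eta_c(t_2 t_4\cdots t_n)$; since $\eta_c(t_1\cdots t_n) = \eta_c(\det t)$ depends only on $\det t$ and $\chi_0$ times $\eta_c\circ\det$ represents the same $Z(\mtgspn)$-character (as $\eta_c^n = 1$ for $n$ even, the class on the center is unchanged — this is exactly where the parity is used again), one gets ${^w\!\chi_0} = \chi_0 \cdot (\eta_c\circ\det) = \chi_0\cdot\eta_c$ up to the identification, i.e. $^w(\chi_0\boxtimes\xi)_\psi = (\chi_0\cdot\eta_c \boxtimes \xi)_\psi \ne (\chi_0\boxtimes\xi)_\psi$ (they differ by $\eta_c$ on $\mtspn$, using the last line of the proof of Lemma \ref{ind rep}) while both lie in $E(\omega_0)$. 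Hence $c \in R(\omega_0)$ is nontrivial. One final bookkeeping point: one must choose $\xi$ with $\xi(-1) = \chi_0(-I_n)$ so that $(\chi_0\boxtimes\xi)_\psi$ is a legitimate character of $\mtgsppn$ by Lemma \ref{ind rep}(3); any such $\xi$ works and its existence is immediate.

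The main obstacle, as I see it, is not conceptual but is keeping the normalizations straight: matching the two descriptions of $E(\omega)$ (as $\eta_c$-twists of $\chi$ versus as $(i(c),1)$-conjugates), checking that the chosen permutation Weyl elements genuinely normalize $\mtspn$ with the trivial cocycle behaviour \eqref{w on t}, and verifying that the $\eta_c$-twist by $\det$ does not change the restriction to $Z(\mtgspn)$ precisely when $n$ is even. All of these are governed by Lemma \ref{coc prop} and Lemma \ref{ind rep}, so the proof is a matter of assembling those pieces carefully rather than introducing new ideas.
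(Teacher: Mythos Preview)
Your argument for part (1) and for the \emph{existence} claim in part (2) is correct and follows the same route as the paper: compare \eqref{to compare} with \eqref{model cha} to see that the $\xi$-component forces $\eta_c^n=1$, which for odd $n$ pins down $c\in{\F^*}^2$; and for even $n$, exhibit the explicit $\chi_0$ of the form \eqref{r not tri} together with the block-swap Weyl element $w_0$ to produce a nontrivial element of $R(\omega_0)$. Your commutativity check (that $\eta_c\circ\det$ is $W_{2n}$-invariant because $\eta_c$ is quadratic) is a clean way to pass from one element of $E(\omega)$ to all of them.

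However, you have not addressed the full content of the ``Furthermore'' clause in (2). Read in light of the paper's proof and of how the lemma is used later (Lemma \ref{r 2 unram}), this clause is a \emph{classification}: every $\omega_0$ with $R(\omega_0)$ nontrivial has an extension which, up to $W_{2n}$-conjugacy, is of the form \eqref{r not tri}. The paper sketches the converse direction by writing an arbitrary $\chi'=\prod_i\chi'_i(t_i)$ and arguing that if $\chi'$ is \emph{not} Weyl-conjugate to some $\chi_0$ of the shape \eqref{r not tri}, then some coordinate character $\chi'_i$ is not related by $(\,\cdot\,)^{\pm 1}$ to any other $\chi'_j$, and from this one deduces that no signed-permutation $w$ can realise ${^w\!\chi'}=\chi'\cdot\eta_c$ for non-square $c$. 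Your proposal treats part (2) as purely an existence statement (``the task is to exhibit an explicit $\omega_0$''), so this half is missing. It is not hard, but it does require a short combinatorial argument about signed permutations acting on the $n$-tuple $(\chi'_1,\ldots,\chi'_n)$, and you should supply it.
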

\begin{proof} The first assertion follows from a comparison of \eqref{model cha} and \eqref{to compare}. Next, if $n$ is even, define
 \begin{equation} \label{special w} w_{_0}=diag(w',w',\ldots,w') \in W_{2n}, \end{equation}
where $w'=\begin{pmatrix} _{0} &
_{1}\\_{1} & _{0} \end{pmatrix}.$
By a straight forward computation $$^{w_{_0}} \! \bigl(\chi_{_0} \boxtimes \xi)_\psi =( \chi_{_0} \eta_c \boxtimes \xi)_\psi.$$
$W_{2n}$ is generated by simple reflections. Denote these reflections by $w_{i,i+1}$ where $1 \leq i \leq n-1$ and by $w_n$ where $w_{i,i+1}$ acts on $[diag(t_n,t_{n-1},\ldots,t_1),1])$ by switching
$t_i$ and $t_{i+1}$ and where $w_{1}$ acts on $[diag(t_n,t_{n-1},\ldots,t_1),1] $ by inverting $t_1$. Thus, if $\chi'$
defined by $$diag(t_n,t_{n-1},\ldots,t_1)=\prod_{i=1}^{n}\chi'_i(t_i)$$
is not Weyl conjugated to $\chi_0$ then there exists $1 \leq i \leq n$ such that $\chi'_i \neq \chi_j^{\pm 1}$ for any $1 \leq j \leq n$. This shows that
$\chi_\psi'$ is not Weyl conjugated to any of its quadratic twists.

\end{proof}

\begin{thm} \label{basic red} Let $\omega$ be a genuine character of $Z\bigl(\mtgspn \bigr)$. Fix $(\chi \boxtimes \xi)_\psi \in E(\omega)$.\\
(1) $I(\omega)$ is irreducible if and only if $R(\omega)$ is trivial and $I(\chi_\psi)$ is an irreducible $\mspn$ module. \\
(2) If $I(\chi_\psi)$ is an irreducible $\mspn$ module and $R(\omega)$ has two elements then  $I(\omega)$ breaks into a direct sum of two $\mgspn$ irreducible modules.\\
(3) If the restriction of $\omega$ to $Z\bigl(\mtgspn \bigr) \cap \mspn$ is unitary and $\R(\omega)$ has two elements then  $I(\omega)$ breaks into a direct sum of two $\mgspn$ irreducible modules.\\
\end{thm}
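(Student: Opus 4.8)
The plan is to exhibit $I(\omega)$ as an induced representation from the finite-index normal subgroup $\mgsppn$ of $\mgspn$ and run Clifford theory, the quotient $A:=\mgspn/\mgsppn\cong\F^*/{\F^*}^2$ being a finite abelian group. By induction in stages, $I(\omega)=I\bigl((\chi\boxtimes\xi)_\psi\bigr)\cong Ind_{\mgsppn}^{\mgspn}\sigma^+$, where $\sigma^+:=Ind_{\mbgsppn}^{\mgsppn}(\chi\boxtimes\xi)_\psi$ is the $\mgsppn$-principal series. Three structural facts drive the argument. First, since $\mgsppn=Z\bigl(\mgpspn\bigr)\mspn$ with $Z\bigl(\mgpspn\bigr)$ acting by the scalar $\xi_\psi$, restriction to $\mspn$ identifies the $\mgsppn$-submodules of $\sigma^+$ with the $\mspn$-submodules of $Res_{\mspn}\sigma^+=I(\chi_\psi)$; in particular $\sigma^+$ is irreducible (resp.\ semisimple) iff $I(\chi_\psi)$ is. Second, taking $\{(i(c),1):c\in\F^*/{\F^*}^2\}$ as coset representatives for $A$, Mackey's formula gives $Res_{\mgsppn}I(\omega)=\bigoplus_{c\in A}\sigma^+_c$ with $\sigma^+_c:={}^{(i(c),1)}\sigma^+=Ind_{\mbgsppn}^{\mgsppn}(\chi\eta_c\boxtimes\xi\eta_c^n)_\psi$ by \eqref{model cha}. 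Third, comparing \eqref{model cha} with \eqref{to compare} and using that an isomorphism of genuine $\mgsppn$-principal series (even reducible ones) forces $W_{2n}$-conjugacy of the inducing characters — one direction from the $\mbgsppn$-Jacquet module, the other from intertwining operators — one gets $\sigma^+_c\cong\sigma^+_d$ iff $cd^{-1}\in R(\omega)$, so that $R(\omega)$ is exactly the stabilizer of $\sigma^+$ in $A$.

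Next I would record two generalities. First, semisimplicity ascends: if $Res_{\mgsppn}V$ is semisimple then so is $V$, by averaging an $\mgsppn$-equivariant projection over the finite group $A$ (this uses characteristic zero). Second, when $\sigma^+$ is irreducible each $\sigma^+_c$ is irreducible (being a conjugate), so $Res_{\mgsppn}I(\omega)$ is semisimple, hence $I(\omega)$ is semisimple, and Frobenius reciprocity together with Schur's lemma give
$$\dim_\C\mathrm{End}_{\mgspn}\bigl(I(\omega)\bigr)=\dim_\C\mathrm{Hom}_{\mgsppn}\Bigl(\sigma^+,\textstyle\bigoplus_{c\in A}\sigma^+_c\Bigr)=|R(\omega)|.$$

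Part (1) is then immediate: if $R(\omega)$ is trivial and $I(\chi_\psi)$ is irreducible, then $\sigma^+$ is irreducible and $I(\omega)$ is a semisimple module with one-dimensional endomorphism ring, hence irreducible; conversely, if $I(\chi_\psi)$ is reducible then $\sigma^+$ has a proper nonzero submodule $U$ and $Ind_{\mgsppn}^{\mgspn}U$ is a proper nonzero submodule of $I(\omega)$ (proper by restricting to $\mgsppn$), while if $I(\chi_\psi)$ is irreducible but $R(\omega)$ is nontrivial then $\dim\mathrm{End}_{\mgspn}(I(\omega))=|R(\omega)|\geq2$ — either way $I(\omega)$ is reducible. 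For part (2) the hypotheses make $\sigma^+$ irreducible with $|R(\omega)|=2$, so $I(\omega)$ is semisimple with two-dimensional endomorphism ring; for a semisimple module this forces $I(\omega)=V_1\oplus V_2$ with $V_1,V_2$ irreducible and non-isomorphic. For part (3) I would reduce to part (2): by Lemma \ref{coc prop}, $Z\bigl(\mtgspn\bigr)\cap\mspn=\{([t,1],\epsilon):\det t\in{\F^*}^2\}$, and on this group $\omega$ equals $([t,1],\epsilon)\mapsto\epsilon\chi(t)$ because $\gamma_\psi$ is trivial on squares, so the unitarity hypothesis says precisely that $\chi$ is unitary on the finite-index subgroup $\{t\in\tgln:\det t\in{\F^*}^2\}$ of $\tgln$, which forces $\chi$ itself to be unitary; and a genuine unitary principal series $I(\chi_\psi)$ of $\mspn$ is irreducible, since by rank-one reduction its only possible reducibility points (of the shape $\mu_i\mu_j^{\pm1}=|\cdot|^{\pm1}$ along a $GL_2$-root, or $\mu_i^2=|\cdot|^{\pm1}$ along an $\msl$-root) are incompatible with unitarity. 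Hence the hypotheses of part (2) hold.

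I expect the genuinely non-formal inputs to be the third structural fact — that an isomorphism of reducible genuine $\mgsppn$-principal series implies $W_{2n}$-conjugacy of the data, which rests on the geometric lemma in the metaplectic setting — and the irreducibility of genuine unitary principal series of $\mspn$ invoked in part (3); the rest (induction in stages, Mackey, ascent of semisimplicity, and the Frobenius/Schur count) is routine.
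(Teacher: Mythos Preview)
Your proof is correct and follows essentially the same strategy as the paper: realize $I(\omega)$ as $Ind_{\mgsppn}^{\mgspn}\sigma^+$ via induction in stages, apply Clifford theory, identify irreducibility of $\sigma^+$ with that of $I(\chi_\psi)$ via $\mgsppn=Z(\mgsppn)\mspn$, and compute the commuting algebra by Frobenius reciprocity. The paper organizes part (1) by splitting into odd and even $n$ --- using that for odd $n$ the conjugates $\sigma^+_c$ have distinct central characters on $\overline{\F^*I_{2n}}$ --- whereas you handle both parities uniformly through the Jacquet-module criterion; this is a cosmetic difference, since your argument recovers the odd-$n$ triviality of $R(\omega)$ from the fact that $W_{2n}$ fixes the $\xi$-component. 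Your parenthetical ``(even reducible ones)'' in the third structural fact is a slight overstatement for the direction ``Weyl-conjugate $\Rightarrow$ isomorphic'', but you never actually invoke it outside the irreducible case, so no harm is done. For part (3) the paper simply cites \cite{HM} and \cite{Sz 4} for the irreducibility of unitary genuine principal series of $\mspn$; your extraction of the unitarity of $\chi$ from the hypothesis is correct and a nice clarification, though your rank-one sketch of the irreducibility result would need those references to be made rigorous.
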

\begin{proof}
Using induction by stages we may realize $I(\omega)$ as
$$Ind_{\mgsppn}^{\mgspn} I'\bigl ((\! \chi\boxtimes \xi)_\psi\bigr),$$
where \begin{equation} \label{i prime} I'\bigl( (\! \chi \boxtimes \xi)_\psi\bigr)=Ind^{\mgsppn}_{\mbgsppn}(\! \chi \boxtimes \xi)_\psi. \end{equation}
Since $\mgsppn$ is a normal subgroup of finite index inside $\mgspn$ it follows from Clifford theory that $I(\omega)$ is irreducible if and only if
$I'\bigl( (\! \chi \boxtimes \xi)_\psi\bigr)$ is irreducible and it is not isomorphic to any of its conjugations by elements of $\mgspn$ which lies outside $\mgsppn$.  Note that since $\mgsppn=\mspn Z\bigl(\mgsppn \bigr)$, it follows that $I'\bigl( (\! \chi \boxtimes \xi)_\psi\bigr)$ is an irreducible $\mgsppn$ module if and only if $I \bigl( \chi_\psi \bigr)$ is an irreducible $\mspn$ module. If $S$ is a set of representatives of  ${\F^*}/{\F^*}^2$ then $\bigl(i(S),1\bigr)$ is a set of representatives of $\mgspn / \mgsppn$ and that for $c\in \F^*$ we have
$$^{(i(c),1)} \! I'\bigl( (\! \chi \boxtimes \xi)_\psi\bigr) \simeq I'\bigl( ^{(i(c),1)} \!(\! \chi \boxtimes \xi)_\psi \bigr).$$
Suppose now that $I'\bigl( (\! \chi \boxtimes \xi_a)_\psi\bigr)$ is irreducible. If $n$ is odd then by the first part of Lemma \ref{act on E}, $R(\omega)$ is trivial and for $c \notin {\F^*}^2$
$$I'\bigl( (\! \chi \boxtimes \xi)_\psi\bigr) \not \simeq {^{(i(c),1)} \! I'\bigl( (\! \chi \boxtimes \xi)_\psi\bigr)}$$ since by the third part of Lemma \ref{coc prop}, these two representations have different central characters. This proves (1) for odd $n$. \emph{}Suppose now that $n$ is even. Then, by \eqref{model cha}, $I'\bigl( (\! \chi \boxtimes \xi)_\psi\bigr)$ and $I'\bigl( ^{(i(c),1)} \!(\! \chi \boxtimes \xi)_\psi \bigr)$ have the same cental character. Hence,
$$I'\bigl( (\! \chi \boxtimes \xi)_\psi\bigr) \simeq {^{(i(c),1)} \! I'\bigl( (\! \chi \boxtimes \xi)_\psi\bigr)}$$
if and only if \begin{equation} \label{in sec}I \bigl( \chi_\psi \bigr) \simeq I \bigl(  (\chi \eta_c)_\psi \bigr).\end{equation}
Since we assume that  $I \bigl(\chi_\psi \bigr)$ is irreducible it follows, similar to linear groups, that\eqref{in sec} holds if and only if
 $$ (\chi \eta_c)_\psi= (^w \!\chi)_\psi$$
for some $w \in W_{2n}$. This finishes the proof of the first assertion.

We now prove the second assertion. Since we assume that $I(\chi_\psi)$ is irreducible, then, by Clifford theory, the assertion will follow once we show that $A(\omega)$, the commuting algebra of
$$Ind_{\mgsppn}^{\mgspn} I'\bigl (( \chi \boxtimes \xi)_\psi\bigr)$$
is 2 dimensional. By Frobeniuns reciprocity
$$A(\omega) \simeq Hom_{\mgsppn}  \Bigl(Ind_{\mgsppn}^{\mgspn} I'\bigl ((\! \chi \boxtimes \xi)_\psi\bigr),I'\bigl ((\! \chi \boxtimes \xi)_\psi\bigr) \Bigr) \simeq$$
$$\bigoplus_{y \in F^*/{F^*}^2} Hom_{\mgsppn}  \Bigl(Ind^{\mgsppn}_{\mbgsppn}\bigl( (\chi \eta_y \boxtimes \xi)_\psi \bigr),Ind^{\mgsppn}_{\mbgsppn}(\! \chi\boxtimes \xi)_\psi \Bigr) \simeq$$ $$\bigoplus_{y \in Y} Hom_{\mspn} \Bigl( I\bigl(   \chi \eta_y)_\psi\bigr), I \bigl(\chi_\psi \bigr)  \Bigr).$$
By an argument we already used and by definition of $R(\omega)$ we finish. The third assertion follows from the second and from The fact
that, regardless of the parity of $n$, any genuine principal series of $\mspn$ induced from a unitary character is irreducible, see \cite{HM} or Theorem 5.1 of \cite{Sz 4}.

\end{proof}
Note: We focus here on the case where $R(\omega)$ has two elements since, as we shall see in Section \ref{app}, if $\omega$ is unramified then $R(\omega)$ has at most two elements. However it is quite possible, for ramified $\omega$, that $R(\omega)$ will have 4 elements, which is the upper bound if  $\F$ is a p-adic field of odd residual characteristic. Indeed, if in \eqref{r not tri} we assume that $\chi_1^2=\chi_2^2=\ldots=\chi_n^2=\eta_d$ where $d$ is a non square element in $\F^*$ such that $dc^{-1} \not \in {\F^*}^2$ then it is easy to see that since
$^{J_{2n}} \! \bigl(\chi_{_0} \boxtimes \xi)_\psi =( \chi_{_0} \eta_d \boxtimes \xi)_\psi$, it follows that $R(\omega)$ has 4 elements.

\subsection{Whittaker functionals}
Let $\theta$ be  a non-degenerate character of $\zspn$ and let $\sigma$ be a genuine representation of $\mspn$ $ \bigl( \mgspn \bigr)$. A $\theta$-Whittaker model for $\sigma$ is a non-zero image of $\sigma$ inside $Ind_{\mzspn}^{\mspn} \theta$ $ \bigl(Ind_{\mzspn}^{\mgspn} \theta \bigr)$ under an $\mspn$ $ \bigl( \mgspn \bigr)$ map. $\sigma$ is called $\theta$- generic if it has a $\theta$-Whittaker model. Note that if $\theta$ and $\theta'$ are conjugated by an element of $\mtspn$ $ \bigl( \mtgspn \bigr)$ then $\sigma$ is $\theta$-generic if and only if it is $\theta'$-generic. Note that there is only one orbit of non-degenerate characters of $\zspn$ under the action of $\mtgspn$ while there are $[\F^* :{\F^*}^2]$ orbits of non-degenerate characters of $\zspn$ under the action of $\mtspn$. If $y$ varies over a set  of representatives of $\F^* /{\F^*}^2$ then $$\theta_y={^{(i(y),1)} \! \theta}$$ varies over all $\tspn$ orbits of a non-degenerate genuine characters of $\zspn$. Note that we may choose a set of representatives $\F^* /{\F^*}^2$ which consists of elements whose order is 0 or 1. Any genuine principal series representation $\sigma$ is $\theta$-generic. Its $\theta$-Whittaker model is unique, see \cite{BFH} or \cite{Sz 1}.
\begin{lem} For $f \in I(\chi_\psi)$ and $g \in \mspn$ the integral
$$W_f(g)=\int_{N_{2n}(\F)} f\bigl((J_{2n}u,1)g \bigr)\theta^{-1}(u,1) \, du. $$ converges absolutely in some cone inside the set of genuine characters of $\mtspn$ and has an analytic continuation to the full set of genuine characters. The map $f\mapsto W_f$ is the unique (up to a scalar) map from $I(\chi_\psi)$ to its $\theta$-Whittaker model.
\end{lem}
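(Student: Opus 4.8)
The plan is to run the classical Jacquet-integral argument on the cover $\mspn$ in three stages: absolute convergence in a cone, meromorphic continuation by reduction to rank one, and then the characterization of $f\mapsto W_f$, which I expect to fall out formally from the genericity of $I(\chi_\psi)$ and the uniqueness of its $\theta$-Whittaker model already recorded above (\cite{BFH}, \cite{Sz 1}).

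First I would prove absolute convergence. Passing to the two-fold cover does not change absolute values, so I would estimate $\lvert f((J_{2n}u,1)g)\rvert$ via the Iwasawa decomposition of $J_{2n}u$ in $\spn$: writing $J_{2n}u=\bar n\cdot a(u)\cdot k(u)$ with $k(u)$ in a fixed maximal compact subgroup, genuineness and the transformation law of $f$ give $\lvert f((J_{2n}u,1)g)\rvert\le C_g\,\delta(a(u))^{1/2}\lvert\chi(a(u))\rvert$, where $\delta$ is the modular character of the Borel of $\spn$ and $C_g$ bounds $\lvert f\rvert$ on the relevant compact set. The resulting majorant $\int_{\zspn}\delta(a(u))^{1/2}\lvert\chi(a(u))\rvert\,du$ is the usual Gindikin--Karpelevich integral, convergent precisely when $\chi$ lies in a suitable open cone of genuine characters of $\mtspn$; this is the first assertion.

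For the continuation I would fix a reduced expression of (a representative of) the long element $J_{2n}$ as a product of simple reflections and use that $J_{2n}$ conjugates the opposite maximal unipotent $N_{2n}^-(\F)$ onto $\zspn$ to factor the integration over $\zspn$ accordingly. This writes $W_f$ as a composition of finitely many rank-one integrations, each over a single root subgroup $\cong\F$; conjugating each into the corresponding rank-one subgroup of $\mspn$ (a copy of $SL_2(\F)$ or of its metaplectic cover $\msl$) turns it into a Tate-type local integral whose value on a flat section is a rational function of the relevant unramified parameter times a ratio of abelian gamma factors, hence continues meromorphically; assembling the finitely many factors continues $W_f$ to the whole space of genuine characters of $\mtspn$. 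This rank-one reduction on the cover is the step I expect to be the main obstacle — bookkeeping the cocycle contributions in each factorization and matching them with the known $\msl$-computations — although it is by now routine (compare \cite{CS} and the $\msl$ calculation underlying \cite{BFH}).

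Finally, the Whittaker transformation law $W_f((n_0,1)g)=\theta(n_0,1)W_f(g)$ for $n_0\in\zspn$ I would obtain from the change of variables $u\mapsto un_0$, the cocycle contributions being trivial by part $8$ of Lemma \ref{coc prop}; together with the obvious identity $W_{\sigma(h)f}(g)=W_f(gh)$ this shows $f\mapsto W_f$ is an $\mspn$-morphism $I(\chi_\psi)\to Ind^{\mspn}_{\mzspn}\theta$. To see $W$ is nonzero I would, for $\chi$ in the cone of convergence, take $f$ supported in a small neighbourhood of $(J_{2n},1)$ inside the open Bruhat cell $\mbspn(J_{2n},1)\zspn$, so that the integral collapses to a single nonvanishing value of $f$ and $W_f(1)\neq 0$; for the remaining parameters the rank-one factorization, together with the fact that the suitably normalized integral is holomorphic and nowhere zero (as in the computation of \cite{BFH}), shows the continued functional is again nonzero. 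Since by \cite{BFH} and \cite{Sz 1} the space $Hom_{\mzspn}(I(\chi_\psi),\theta)$ is one-dimensional, hence so is $Hom_{\mspn}\bigl(I(\chi_\psi),Ind^{\mspn}_{\mzspn}\theta\bigr)$ by Frobenius reciprocity, and $f\mapsto W_f$ is a nonzero element of the latter, every such map is a scalar multiple of it; this gives the uniqueness statement.
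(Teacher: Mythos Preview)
The paper does not actually prove this lemma: it simply records that the statement is well known and cites Theorem 5.2 of \cite{BZ}, together with Chapter 4 of \cite{BanPhD} and Theorem 6.3 of \cite{Mc}. So there is no detailed argument in the paper to compare against; your convergence paragraph and your uniqueness paragraph are both standard and correct, and they are precisely what those references unpack.

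Your continuation step, however, has a genuine gap. The Jacquet integral $\int_{\zspn} f((J_{2n}u,1)g)\,\theta^{-1}(u)\,du$ does \emph{not} factor into a string of independent rank-one ``Tate-type'' integrals the way the intertwining operator $A_{w_0}$ does. The additive character $\theta^{-1}$ is nontrivial on every simple root subgroup, so after integrating out the first root subgroup you are no longer evaluating a vector in a principal series of a smaller group against a multiplicative character; the remaining integrand is coupled to all the other variables through $f$ and through $\theta^{-1}$. What you have written is really the factorization of $A_{w_0}$, not of the Whittaker functional. The two standard routes to continuation are different from this: either Bernstein's finiteness argument (this is what lies behind the Bernstein--Zelevinsky citation) showing that the functional on a flat section satisfies a polynomial system forcing holomorphy in the inducing parameter, or the Casselman--Shalika/Shahidi approach in which one proves a functional equation $\lambda_{\chi}=c(w,\chi)\,\lambda_{{}^{w}\!\chi}\circ A_w$ for each simple reflection and bootstraps across Weyl chambers. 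The rank-one $\msl$ computation from \cite{BFH} that you allude to enters in the second approach, but as the scalar in such a functional equation, not as a literal factor of the original integral. If you rewrite your middle paragraph along either of those lines, the argument goes through.
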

\begin{proof} This is well known and follows from Theorem 5.2 of \cite{BZ}. See  for example Chapter 4 of \cite{BanPhD} or Theorem 6.3 of \cite{Mc}.
\end{proof}
Whittaker models for $\mgspn$ are not unique. For principal series representations we have the following
\begin{lem} \label{basis lemma} Fix $A$, a set of $[\F^*:{\F^*}^2]$ representatives of $\mtgspn / \mtgsppn$. For $f \in I\bigl(\chi \boxtimes \xi)_\psi\bigr), \, g \in \mgspn, \, a \in A$ the integrals

$$\widetilde{W}^a_f(g)=\int_{N_{2n}(\F)} f\bigl(a(J_{2n}u,1)g \bigr)\theta^{-1}(u,1) \, du $$

converges absolutely in some cone inside the set of genuine characters of $\mtgsppn$ and has an analytic continuation to the full set of genuine characters. The set of maps
$$\{f\mapsto \widetilde{W}^a_f \mid a\in A\}$$ form a basis for  $$Hom_{\mgspn}\Bigl(I\bigl((\chi \boxtimes \xi)_\psi\bigr), Ind^{\mgspn}_{\overline {N_{2n}(\F)}}\theta\Bigr).$$
\end{lem}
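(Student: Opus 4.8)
The plan is to deduce everything from the decomposition
$$I\bigl((\chi \boxtimes \xi)_\psi \bigr)= \bigoplus_{c \in \F^* / {\F^*}^2} I^{c}\bigl((\chi \boxtimes \xi)_\psi \bigr)$$
described in the introduction, together with the uniqueness of the $\theta$-Whittaker model for $\mspn$ (the previous lemma) and the Clifford-theoretic identification of the space of Whittaker functionals. First I would establish convergence and meromorphic continuation of $\widetilde{W}^a_f$: the integral $\int_{N_{2n}(\F)} f\bigl(a(J_{2n}u,1)g\bigr)\theta^{-1}(u,1)\,du$ is, after translating by $a$ and using that $a$ normalizes $\bigl(\zspn,1\bigr)$ (part 8 of Lemma \ref{coc prop}), a Jacquet-type integral for the representation $I\bigl(^a(\chi \boxtimes \xi)_\psi\bigr)$ against the character $\theta_a$; restricting $f$ to the summand on which $\lambda$ takes values in $\lambda(a){\F^*}^2$ identifies it with the analogous integral for the $\mspn$-module $I\bigl((\chi\eta_{\lambda(a)})_\psi\bigr)$ against $\theta_a$, which converges in a cone and continues meromorphically by the previous lemma (Theorem 5.2 of \cite{BZ}). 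So each $\widetilde{W}^a_f$ is a well-defined element of $Hom_{\mgspn}\bigl(I((\chi\boxtimes\xi)_\psi),Ind^{\mgspn}_{\overline{N_{2n}(\F)}}\theta\bigr)$.

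Next I would compute the dimension of the target Hom space. By Frobenius reciprocity this is $\dim Hom_{\mzspn}\bigl(I((\chi\boxtimes\xi)_\psi),\theta\bigr)$, the space of $\theta$-Whittaker functionals. Decomposing $I((\chi\boxtimes\xi)_\psi)$ along the $\mspn$-invariant (indeed $\mgsppn$-invariant) summands $I^c$, a Whittaker functional is a tuple of functionals, one on each $I^c\bigl((\chi\boxtimes\xi)_\psi\bigr)\simeq I\bigl((\chi\eta_c)_\psi\bigr)$ as an $\mspn$-module, relative to the appropriate translate of $\theta$; since each genuine principal series of $\mspn$ has a one-dimensional space of $\theta'$-Whittaker functionals for every non-degenerate $\theta'$ (the previous lemma, valid on all $\mtspn$-orbits), each summand contributes exactly one dimension, giving total dimension $[\F^*:{\F^*}^2]$. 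This matches the cardinality of $A$.

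It then remains to show the $[\F^*:{\F^*}^2]$ functionals $f\mapsto\widetilde{W}^a_f$ are linearly independent. For this I would use the support-decomposition: the functional $f\mapsto\widetilde{W}^a_f$, restricted to $I^c\bigl((\chi\boxtimes\xi)_\psi\bigr)$, is built from the translate-by-$a$ of the Jacquet integral and is supported (in the sense of the decomposition on the Whittaker side / by following where $\lambda$ lives) on the single coset $\lambda(a){\F^*}^2$. More precisely, for $f$ supported in $I^c$, a change of variables moving $a$ past $J_{2n}u$ shows $\widetilde{W}^a_f$ is, up to the isomorphism $I^c\simeq I((\chi\eta_c)_\psi)$, the value at the long Weyl element of the nonzero $\mspn$-Jacquet integral composed with the isomorphism $f\mapsto f_a$ — so it is a \emph{nonzero} functional precisely on the summand indexed by $c=\lambda(a)\bmod{\F^*}^2$ and can be arranged to vanish on the other summands by the calculation in the introduction (the functional $\lambda_{(\chi\boxtimes\xi)_\psi,a,\theta_a}$ vanishes on $I_c$). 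Since distinct $a\in A$ hit distinct cosets $\lambda(a){\F^*}^2$, the $[\F^*:{\F^*}^2]$ functionals are supported on pairwise distinct direct summands, hence are linearly independent, hence form a basis.

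The main obstacle I expect is the bookkeeping in the last step: carefully justifying that $\widetilde{W}^a_f$, when restricted to the various summands $I^c$, really does pick out exactly one summand — i.e., tracking how the outer translation by $a$ interacts with the long Weyl element $J_{2n}$, the unipotent integration, and the support condition on $\lambda$, including the cocycle factors that appear when one commutes $a\in\mtgspn$ past elements of $\mspn$. The earlier exposition (the identities for $\lambda_{(\chi\boxtimes\xi)_\psi,t,\theta_t}$ and the fact that $A_w$ preserves $I^c$) essentially records these computations, and Lemma \ref{coc prop}(2),(8) controls the cocycle contributions, so the argument is routine once organized, but it is the place where a sign or support error would hide.
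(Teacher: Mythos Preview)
Your argument is correct, but it takes a different route from the paper's formal proof. The paper simply cites Lemmas~1.3.1 and~1.3.2 of Kazhdan--Patterson: the space of $\theta$-Whittaker functionals on $I\bigl((\chi\boxtimes\xi)_\psi\bigr)$ is identified, via the Jacquet integral, with the linear dual of the finite-dimensional torus representation $i\bigl((\chi\boxtimes\xi)_\psi\bigr)=\mathrm{Ind}_{\mtgsppn}^{\mtgspn}(\chi\boxtimes\xi)_\psi$, and $\{f\mapsto f(a)\mid a\in A\}$ is a basis of that dual. This is the general covering-group machinery and requires no decomposition along $\F^*/{\F^*}^2$.

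What you do instead is exactly the mechanism the paper lays out in its \emph{introduction} (and alludes to as ``the arguments in Section~5 of \cite{Sz 5}''): split $I\bigl((\chi\boxtimes\xi)_\psi\bigr)$ into the $\mgsppn$-stable pieces $I^c$, invoke uniqueness of Whittaker models for $\mspn$ on each, and check that the functional $f\mapsto\widetilde{W}^a_f(e)$ is supported on $I^{\lambda(a)}$ because $a(J_{2n}u,1)$ has similitude $\lambda(a)$. Two small corrections: after the isomorphism $f\mapsto f_a$ the Jacquet integral is against $\theta$, not $\theta_a$ (the character only becomes $\theta_y$ when you move the representative to the \emph{right}, as in Lemma~\ref{basis lemma again}); and the restriction of a $\theta$-Whittaker functional to $I^c$ is still a $\theta$-functional, not a translate --- but this does not affect your dimension count. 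The support argument for linear independence is actually simpler than you suggest: no change of variables is needed, just the observation that $f(a(J_{2n}u,1))=0$ for $f\in I^c$ with $c\not\equiv\lambda(a)$. Your approach buys a self-contained proof that makes the role of $\mspn$-uniqueness explicit; the paper's citation buys brevity and places the result in the general Kazhdan--Patterson framework.
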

\begin{proof} This is proven exactly as in Lemmas 1.3.1 and 1.3.2 of \cite{KP}. The idea here if that $$\{f\mapsto f(a) \mid a\in A\}$$ is a basis of the space of functionals on $i \bigl((\chi \boxtimes \xi)_\psi \bigr)$ and that the Jacquet integral is an isomorphism from this space to $Hom_{\mgspn}\bigl(I\bigl(\chi \boxtimes \xi)_\psi\bigr), Ind^{\mgspn}_{\overline {N_{2n}(\F)}}\theta\bigr).$ This theorem may also be deduced from the arguments in Section 5 of \cite{Sz 5}.
\end{proof}

\begin{cor} \label{n odd stru} Keep the notation of Lemma \ref{basis lemma} and assume that $n$ is odd. Denote by $\omega'$ the central character of  $I\bigl((\chi \boxtimes \xi)_\psi\bigr)$. $\omega'$ is the restriction of $(\chi \boxtimes \xi)_\psi$ to $Z \bigl(\mgspn \bigr)=\overline{{F^*}^2I_{2n}}$. Let $\Omega$ be the set of the $[F^*:{\F^*}^2]$ extensions of $\omega'$ to $Z \bigl(\mgsppn \bigr)=\overline{{F^*}I_{2n}}$. For each $\mu \in \Omega$, exactly one of the maps $f\mapsto \widetilde{W}^a_f$ is an element in the one dimensional space

$$Hom_{\mgspn}\Bigl(I\bigl((\chi \boxtimes \xi)_\psi\bigr), Ind^{\mgspn}_{Z \bigl(\mgspn \bigr) \times \overline {N_{2n}(\F)}} \mu \times\theta\Bigr).$$
\end{cor}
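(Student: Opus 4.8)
The plan is to combine the basis statement of Lemma~\ref{basis lemma} with the third part of Lemma~\ref{coc prop}, which records how conjugation by $(i(c),1)$ twists the value of a genuine character on $\overline{\F^* I_{2n}}$. Since $n$ is odd, $Z(\mgspn)=\overline{{\F^*}^2 I_{2n}}$ while $Z(\mgsppn)=\overline{\F^* I_{2n}}$, so $[\,Z(\mgsppn):Z(\mgspn)\,]=[\F^*:{\F^*}^2]$ and $\Omega$ has exactly $[\F^*:{\F^*}^2]$ elements. First I would fix the set $A$ of representatives of $\mtgspn/\mtgsppn$; by Lemma~\ref{ind rep}(4) one may and I will take $A=\{(i(a),1)\mid a\in\F^*/{\F^*}^2\}$. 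The first step is to identify, for each $a\in A$, the character by which $Z(\mgsppn)$ acts on the image of $f\mapsto\widetilde W^a_f$. Because $\widetilde W^a_f(g)=\int_{N_{2n}(\F)} f\bigl(a(J_{2n}u,1)g\bigr)\theta^{-1}(u,1)\,du$ and $Z(\mgsppn)=\overline{\F^* I_{2n}}$ is central in $\mgspn$, for $z\in Z(\mgsppn)$ we get $\widetilde W^a_f(zg)=\widetilde W^a_f(gz)$ and, pushing $z$ through to act on $f$ via the central character of $I\bigl((\chi\boxtimes\xi)_\psi\bigr)$ restricted to $Z(\mgsppn)$, one finds $\widetilde W^a_f(zg)=\bigl(^{a}(\chi\boxtimes\xi)_\psi\bigr)(z)\,\widetilde W^a_f(g)$; that is, the image of $f\mapsto\widetilde W^a_f$ lies in the $\mu_a$-eigenspace, where $\mu_a$ is the restriction of $^{(i(a),1)}(\chi\boxtimes\xi)_\psi$ to $Z(\mgsppn)$.

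The second step is to show that $a\mapsto\mu_a$ is a bijection from $A$ onto $\Omega$. By part~(3) of Lemma~\ref{coc prop}, $\bigl(aI_{2n},\epsilon\bigr)^{(i(c),1)}=\bigl(aI_{2n},\epsilon(\lambda(i(c)),a^n)_\F\bigr)=\bigl(aI_{2n},\epsilon(c,a^n)_\F\bigr)$; since $n$ is odd this is $\bigl(aI_{2n},\epsilon\,\eta_c(a)\bigr)$, so $^{(i(c),1)}(\chi\boxtimes\xi)_\psi$ differs from $(\chi\boxtimes\xi)_\psi$ on $Z(\mgsppn)=\overline{\F^* I_{2n}}$ precisely by the quadratic character $aI_{2n}\mapsto\eta_c(a)$. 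As $c$ runs over $\F^*/{\F^*}^2$ these quadratic twists are pairwise distinct (non-degeneracy of the Hilbert symbol), and they all restrict to the same character $\omega'$ on $Z(\mgspn)=\overline{{\F^*}^2 I_{2n}}$ because $\eta_c$ is trivial on ${\F^*}^2$. Hence $\{\mu_a\mid a\in A\}$ is exactly the set of $[\F^*:{\F^*}^2]$ distinct extensions of $\omega'$ to $Z(\mgsppn)$, i.e.\ $\Omega$, and the map $a\mapsto\mu_a$ is a bijection.

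The third step assembles the conclusion. The space $Hom_{\mgspn}\bigl(I\bigl((\chi\boxtimes\xi)_\psi\bigr),Ind^{\mgspn}_{Z(\mgspn)\times\overline{N_{2n}(\F)}}\mu\times\theta\bigr)$ is the subspace of the Whittaker $\mathrm{Hom}$-space from Lemma~\ref{basis lemma} consisting of those functionals on which $Z(\mgspn)$ acts by $\mu'$ and which additionally are $\mu$-equivariant for the larger group $Z(\mgsppn)$; since $I\bigl((\chi\boxtimes\xi)_\psi\bigr)$ already has central character $\omega'$ under $Z(\mgspn)$, every element of the Lemma~\ref{basis lemma} basis lies in the $\omega'$-isotypic part, and the refinement is exactly the decomposition of the $[\F^*:{\F^*}^2]$-dimensional Whittaker $\mathrm{Hom}$-space into the eigenlines for $Z(\mgsppn)/Z(\mgspn)\cong\F^*/{\F^*}^2$. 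By Step~1 each basis element $f\mapsto\widetilde W^a_f$ is a $\mu_a$-eigenvector, by Step~2 the $\mu_a$ are distinct and exhaust $\Omega$, and a basis of a space decomposed into distinct characters of a finite abelian group has exactly one member in each eigenline; this eigenline is one-dimensional. That gives the statement. The main obstacle I anticipate is the bookkeeping in Step~1 — verifying rigorously that the central action of $Z(\mgsppn)$ on the image of the (analytically continued) Jacquet integral $\widetilde W^a_f$ is genuinely $^{(i(a),1)}(\chi\boxtimes\xi)_\psi$ and not some other twist, which requires being careful about where $a$ sits relative to $J_{2n}$, $u$, and $z$, and using that $z$ is truly central in $\mgspn$ (not merely in $\mgsppn$) so that it commutes past the representative $a$; once that eigencharacter is pinned down, Steps~2 and~3 are formal consequences of Lemma~\ref{coc prop}(3) and Lemma~\ref{basis lemma}.
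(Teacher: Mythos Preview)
Your overall approach is the same as the paper's: examine the action of $Z\bigl(\mgsppn\bigr)$ on the basis of Whittaker functionals supplied by Lemma~\ref{basis lemma}, show via Lemma~\ref{coc prop}(3) that the resulting eigencharacters are pairwise distinct and exhaust $\Omega$, and conclude by Frobenius reciprocity. Steps~2 and~3 are correct, and the final identification $\mu_a={}^{a}(\chi\boxtimes\xi)_\psi\big|_{Z(\mgsppn)}$ is the right one.

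There is, however, a real confusion in the justification of Step~1. You assert that $Z\bigl(\mgsppn\bigr)=\overline{\F^* I_{2n}}$ is central in $\mgspn$, and hence that $\widetilde W^a_f(zg)=\widetilde W^a_f(gz)$. This is false precisely when $n$ is odd: by Lemma~\ref{coc prop}(4), $Z\bigl(\mgspn\bigr)=\overline{{\F^*}^2 I_{2n}}\subsetneq\overline{\F^* I_{2n}}$. Worse, if $z$ really were central in $\mgspn$ it would commute with the representative $a$, and you would obtain $(\chi\boxtimes\xi)_\psi(z)$ with no $a$-dependence at all, directly contradicting your Step~2. The correct mechanism is the following: $z$ commutes with $(J_{2n}u,1)\in\mspn\subset\mgsppn$ because $z$ is central in $\mgsppn$, and then one writes $az=(aza^{-1})a$; by Lemma~\ref{coc prop}(3), for $z=(bI_{2n},\epsilon)$ and $a=(i(c),1)$ one has $aza^{-1}=\bigl(bI_{2n},\epsilon\,(c,b^n)_\F\bigr)$, which for $n$ odd is $\bigl(bI_{2n},\epsilon\,\eta_c(b)\bigr)$. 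Applying the left $\mbgsppn$-equivariance of $f$ then gives $\widetilde W^a_f(zg)=(\chi\boxtimes\xi)_\psi(z)\,\eta_c(b)\,\widetilde W^a_f(g)$, which is your $\mu_a$. Your closing caveat shows you sensed the issue, but the resolution is the opposite of what you suggest there: it is exactly the \emph{failure} of $z$ to commute past $a$ that produces the distinct eigencharacters. Once this is corrected, your argument matches the paper's.
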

\begin{proof}
Let $W$ be the space of $\theta$ Whittaker functionals on $I\bigl((\chi \boxtimes \xi)_\psi\bigr)$, i.e., the set of functionals on $I\bigl((\chi \boxtimes \xi)_\psi\bigr)$ such that $\lambda(\rho(n)f)=\theta(n)\lambda(f)$ for any $n \in \mzspn$. Here $\rho$ stands for right translations. By Frobenius reciprocity $$W \simeq Hom_{\mgspn}\Bigl(I\bigl((\chi \boxtimes \xi)_\psi\bigr), Ind^{\mgspn}_{\overline {N_{2n}(\F)}}\theta\Bigr)$$
and $$\{f\mapsto \widetilde{W}^a_f(I_{2n},1) \mid a\in A\}$$ form a basis for $W$. Clearly $Z \bigl(\mgsppn \bigr)$ acts on $W$. By examining its action on this basis it follows that $W$ decomposes over $Z \bigl(\mgsppn \bigr)$  with multiplicity 1 to $[F^*:{F^*}^2]$ one dimensional spaces and that $\Omega$ is the set of $Z \bigl(\mgsppn \bigr)$ eigen values. The Lemma follows now from Frobenius reciprocity.
\end{proof}
\begin{lem} \label{basis lemma again}
Let $Y$ be a set of representatives of $[\F^*:{\F^*}^2]$. For $f \in I\bigl((\chi \boxtimes \xi)_\psi\bigr)$ and $g \in \mgspn, \, y \in Y$
define $W^y_f(g)$ to be the analytic continuation of $$\int_{N_{2n}(\F)} f\bigl((J_{2n}u,1)(i(y),1)g \bigr)\theta_y^{-1}(u,1) \, du. $$
The set of maps
$$\{f\mapsto W^a_f \mid a\in A\}$$ form a basis for  $$Hom_{\mgspn}\Bigl(I\bigl((\chi \boxtimes \xi)_\psi\bigr), Ind^{\mgspn}_{\overline {N_{2n}(\F)}}\theta\Bigr).$$
\end{lem}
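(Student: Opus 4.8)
The plan is to imitate the proof of Lemma \ref{basis lemma} (that is, of Lemmas 1.3.1--1.3.2 of \cite{KP}); the new features, the twisted characters $\theta_y$ and the shifts by $(i(y),1)$, are absorbed using that $(i(y),1)\in\mtgspn$ normalizes $\zzspn$ on the trivial section (part (8) of Lemma \ref{coc prop}). Fix $y\in Y$ and write $W^y_f(g)=V^y_f\bigl((i(y),1)g\bigr)$, where $V^y_f(h)=\int_{\zspn}f\bigl((J_{2n}u,1)h\bigr)\theta_y^{-1}(u,1)\,du$ is the $\theta_y$-Jacquet integral. Since $\theta_y$ is again a non-degenerate character of $\zspn$, Lemma \ref{basis lemma} applied with $\theta_y$ in place of $\theta$ and $a=(I_{2n},1)$ gives that $V^y_f$ converges in a cone of genuine characters, continues to all of them, and that $f\mapsto V^y_f$ lies in $Hom_{\mgspn}\bigl(I\bigl((\chi\boxtimes\xi)_\psi\bigr),Ind^{\mgspn}_{\mzspn}\theta_y\bigr)$; the right shift by $(i(y),1)$ preserves these properties, and the one point to verify is that $W^y_f$ transforms on the left by $\theta$ rather than by $\theta_y$. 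This is immediate from part (8) of Lemma \ref{coc prop}: for $n\in\zspn$ one has $(i(y),1)(n,1)=(n_1,1)(i(y),1)$ with $n_1=i(y)ni(y)^{-1}\in\zspn$ and no cocycle correction, so
$$W^y_f\bigl((n,1)g\bigr)=V^y_f\bigl((n_1,1)(i(y),1)g\bigr)=\theta_y(n_1,1)\,V^y_f\bigl((i(y),1)g\bigr)=\theta(n)\,W^y_f(g),$$
the last step because $\theta_y(n_1,1)={}^{(i(y),1)}\theta(n_1,1)=\theta\bigl(i(y)^{-1}n_1i(y),1\bigr)=\theta(n)$. Thus $f\mapsto W^y_f$ lies in $Hom_{\mgspn}\bigl(I\bigl((\chi\boxtimes\xi)_\psi\bigr),Ind^{\mgspn}_{\mzspn}\theta\bigr)$ for every $y\in Y$; equivalently, by Frobenius reciprocity, $\lambda^y\colon f\mapsto W^y_f(I_{2n},1)$ is a genuine $\theta$-Whittaker functional on $I\bigl((\chi\boxtimes\xi)_\psi\bigr)$.

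For linear independence I would use the elementary support decomposition $I\bigl((\chi\boxtimes\xi)_\psi\bigr)=\bigoplus_{c\in\F^*/{\F^*}^2}I^c\bigl((\chi\boxtimes\xi)_\psi\bigr)$, where $I^c$ consists of the functions supported on $\{g:\lambda(g)\in c{\F^*}^2\}$; this holds because $\mgsppn$ is normal in $\mgspn$ with quotient represented by $\{(i(c),1)\}_{c\in\F^*/{\F^*}^2}$. Since $\lambda\bigl((J_{2n}u,1)(i(y),1)\bigr)=y$ for every $u\in\zspn$, the functional $\lambda^y$ annihilates $I^d\bigl((\chi\boxtimes\xi)_\psi\bigr)$ whenever $d\notin y{\F^*}^2$ (first in the cone, then for all parameters by analytic continuation), while it is non-zero on $I^y\bigl((\chi\boxtimes\xi)_\psi\bigr)$: this one sees, for generic $\chi$ and then in general by analytic continuation, by feeding $V^y$ a function $f\in I^y$ supported modulo $\mbgsppn$ in a small neighbourhood of the big-cell point $J_{2n}i(y)$ on which $\theta_y$ does not oscillate --- precisely the non-vanishing step of \cite{KP}. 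Hence the $[\F^*:{\F^*}^2]$ functionals $\lambda^y$, $y\in Y$, live on pairwise complementary summands and are therefore linearly independent, and so are the maps $f\mapsto W^y_f$.

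Finally, Lemma \ref{basis lemma} gives $\dim Hom_{\mgspn}\bigl(I\bigl((\chi\boxtimes\xi)_\psi\bigr),Ind^{\mgspn}_{\mzspn}\theta\bigr)=[\F^*:{\F^*}^2]=|Y|$, so the $|Y|$ linearly independent maps $f\mapsto W^y_f$ form a basis. (Alternatively, and bypassing the dimension count, one can show directly that $W^y_f$ is a non-zero multiple of one of the basis vectors $\widetilde W^a_f$ of Lemma \ref{basis lemma}: using $J_{2n}i(y)=\mathrm{diag}(yI_n,I_n)J_{2n}=(yI_{2n})i(y)^{-1}J_{2n}$ one moves $(i(y),1)$ past $J_{2n}$, then extracts the central factor $(yI_{2n},\ast)\in Z(\mgsppn)$ from $f$ and changes variables $u\mapsto i(y)^{-1}ui(y)$, which also converts $\theta_y$ back into $\theta$, identifying $W^y$ with such a multiple of $\widetilde W^a$ for a representative $a$ of the coset $(i(y),1)\mtgsppn$.) The step I expect to be the real obstacle is this identification --- the bookkeeping of Rao's cocycle through the matrix identity and the change of variables --- or, in the route taken above, the equivalent \cite{KP}-style analytic-continuation argument for the non-vanishing of $\lambda^y$ on $I^y$ at non-generic $\chi$. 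Everything else is formal.
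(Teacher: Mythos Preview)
Your proof is correct, and your parenthetical ``alternative'' is in fact the paper's own argument. The paper simply takes $A=\{(\mathrm{diag}(yI_n,I_n),1)\mid y\in Y\}$ as the set of representatives in Lemma~\ref{basis lemma}, observes the matrix identity $\mathrm{diag}(yI_n,I_n)\,J_{2n}=J_{2n}\,i(y)$ (so that $(\mathrm{diag}(yI_n,I_n),1)(J_{2n}u,1)=(J_{2n}u',1)(i(y),1)$ up to $(I_{2n},\pm 1)$, with $u'=i(y)ui(y)^{-1}$), and changes the integration variable $u\mapsto u^{(i(y),1)}$; this converts $\widetilde W^a_f$ literally into $W^y_f$ and converts $\theta^{-1}$ into $\theta_y^{-1}$. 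No separate membership check, no support decomposition, no non-vanishing argument --- Lemma~\ref{basis lemma} already carries everything.

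Your main route (verify $W^y_f\in Hom$ directly via part (8) of Lemma~\ref{coc prop}, then get linear independence from the similitude-support decomposition, then count dimensions) is a genuine alternative. It has the virtue of making transparent \emph{why} this particular basis is adapted to the decomposition $I=\bigoplus_c I^c$ --- each $\lambda^y$ is supported on a single summand --- which is exactly the point stressed in the Introduction. The cost is the non-vanishing step: your appeal to analytic continuation for ``$\lambda^y\neq 0$ on $I^y$ at non-generic $\chi$'' needs the standard fact (Bernstein's argument, or the \cite{KP}/\cite{BZ} theory) that the Jacquet functional is everywhere non-zero as a functional, which is more than you get from checking it at generic points. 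The paper sidesteps this entirely by inheriting both existence and independence from Lemma~\ref{basis lemma}.
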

\begin{proof} As a set of representatives of $\mtgspn / \mtgsppn$ we choose $$\{\bigl(diag(yI_{2n},I_{2n}),1 \bigr)\mid y \in Y \}.$$
We note that up to $(I_{2n},\pm 1)$
$$(diag(yI_{2n},I_{2n}),1 \bigr)(J_{2n}u,1)=(J_{2n}u,1)(i(y),1).$$ The lemma follows now from the Lemma \ref{basis lemma} by changing integration variable $u \mapsto u^{(i(y),1)}$.
\end{proof}
\section{Extension of a result of Bump, Friedberg and Hoffstein}  \label{BFH sec}
From this point until the end of this paper, we assume that the residual characteristic of $\F$ is odd. We fix $\psi$, a normalized character of $\F$, and $\theta$, a non-degenerate normalized character of $N_{2n}(\F)$ which agrees with $\psi^{-1}$ on the long root.

A genuine character of $\mtspn$ is called unramified if it is trivial on $\mkspn \cap \mtspn$. Fix $\alpha=(\alpha_1, \alpha_2,\ldots,\alpha_n) \in {\C^*}^n$ and define $\chi_\alpha$ to be a character of $\tspn$ by
$$\chi_\alpha(diag(a_n,a_{n-1},\ldots,a_1, a_n^{-1}, a_{n-1}^{-1},\ldots,  a_1 ^{-1}) =\prod_{i=1}^{n}\alpha_i^{ord(a_i)}.$$
Since $\psi$ has an even conductor, $\gamma_\psi(\Of^*)=1$. Thus, any  unramified genuine character of $\mtspn$ may be written as $({\chi_\alpha})_\psi$ for some $\alpha \in {\C^*}^n$. Note that the action of $W_{2n}$ maps an unramified character of $\mtspn$ to an unramified character. Denote
$$^w({\chi_\alpha})_\psi= ({\chi_{^w \! \alpha}})_\psi.$$
This action induces an action of $W_{2n}$ on the polynomial ring $$R=\C[\alpha_1,\alpha_2,\ldots,\alpha_n,\alpha_1^{-1},\alpha^{-1}_2,\ldots,\alpha^{-1}_n].$$ Let $A$ be the following linear map on $R$:
$$A(p)=\sum_{w\in W_{2n}} (-1)^{length(w)} \, {^w \! p}.$$
Define $$\Delta(\alpha)=A\bigl(\prod_{i=1}^n \alpha_i^i\bigr).$$

If $({\chi_\alpha})_\psi$ is a genuine unramified character of $\mtspn$, then $I \bigl(({\chi_\alpha})_\psi \bigr)$  contains a one dimensional $\mkspn$ fixed subspace.  We denote by $f^0_{({\chi_\alpha})_\psi}$ the normalized Spherical function of $I \bigl(({\chi_\alpha})_\psi \bigr)$, i.e., the unique $\mkspn$ invariant element inside $I \bigl(({\chi_\alpha})_\psi \bigr)$ such that $$f^0_{({\chi_\alpha})_\psi}(I_{2n},1)=1.$$
An image of $f^0_{({\chi_\alpha})_\psi}$ inside the $\theta_y$-Whittaker model of $I \bigl(({\chi_\alpha})_\psi \bigr)$, which is unique up to a scalar is called a Spherical $\theta_y$-Whittaker function attached to $I \bigl(({\chi_\alpha})_\psi \bigr)$. A Spherical $\theta_y$ Whittaker function of $W^0_{\alpha,\psi,y,\theta}$ is called symmetric if
\begin{equation} \label{func eq} W^0_{\alpha,\psi,y,\theta}=W^0_{^w \! \alpha,\psi,y,\theta} \end{equation} for any $w \in W_{2n}$. Following \cite{BFH}, We shall now give an explicit formulas for
these elements where the order of $y$ is either 0 or 1. Define
$$D(\alpha,y)=\prod_{i<j}\bigl((1-q^{-1}\alpha_j\alpha_i^{-1})(1-q^{-1}\alpha_j\alpha_i)\bigr) \begin{cases}\prod_{i=1}^{n} (1+\eta_\pi(-y)q^{-\half}\alpha_i)  & \ab y \ab=1, \\ \prod_{i=1}^{n} (1-q^{-1}\alpha_i^2)  & \ab y \ab=q^{-1} \end{cases} $$
and define
$$W^0_{\alpha,\psi,y,\theta}(g)=D(\alpha,y)^{-1}\int_{N_{2n}(\F)} f^0_{\chi_{\alpha,\psi}}\bigl((J_{_{2n}}n,1)g\bigr) \theta^{-1}_y(n) \, dn$$
\begin{lem} \label{BFH res}
For $t= diag(a_n,a_{n-1},\ldots,a_1) \in \tgln$ denote $$ord(a_i)=k_i.$$ For $y\in \Of^*$ We have
\begin{equation} \label{nor orbits}  W^0_{\alpha,\psi,y,\theta} \bigl([t,1],1 \bigr)=$$ $$ \Delta^{-1}(\alpha)\delta(t)^{\half} \gamma_{\psi}^{-1}(t)\begin{cases}  A\bigl(\prod_{i=1}^n (1-\eta_\pi(-y) q^{\frac{-1}{2}}\alpha_i^{-1}) \alpha_i^{k_i+i}\bigr) & 0\leq k_1 \leq k_2 \leq \ldots \leq k_n \\ 0 & othewise \end{cases}. \end{equation}
For $y$ such that $\ab y \ab=q^{-1}$ we have
\begin{equation} \label{unnor orbits} W^0_{\alpha,\psi,y,\theta}(t,1)=\\ \Delta^{-1}(\alpha)\delta(t)^{\half} \gamma_{\psi}^{-1}(t)\begin{cases}  A\bigl(\prod_{i=1}^n (\alpha_i^{k_i+i}\bigr) & 0\leq k_1 \leq k_2 \leq \ldots \leq k_n \\ 0 & othewise \end{cases}.\end{equation}
In particular, for any $y \in \F^*$ whose order is either 0 or 1, $W^0_{\alpha,\psi,y,\theta}$ is a symmetric Spherical $\theta_{y}$-Whittaker function attached to $I \bigl(({\chi_\alpha})_\psi \bigr)$.
\end{lem}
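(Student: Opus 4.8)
The plan is to reduce the computation of $W^0_{\alpha,\psi,y,\theta}$ to the case $y=1$ already handled in \cite{BFH}, using the isomorphism between $\mspn$-modules induced by translation and the twisting relation $\chi_{\psi_a}=(\chi\cdot\eta_a)_\psi$ from \eqref{psi cha}. Concretely, for $y\in\F^*$ with $ord(y)\in\{0,1\}$, the substitution $u\mapsto u^{(i(y),1)}$ inside the Jacquet integral (exactly as in Lemma \ref{basis lemma again}) rewrites $\int f^0_{\chi_{\alpha,\psi}}\bigl((J_{2n}u,1)(i(y),1)g\bigr)\theta_y^{-1}(u)\,du$ as an integral of the translated spherical vector against $\theta$; this identifies the $\theta_y$-Whittaker function of $I((\chi_\alpha)_\psi)$ with (a constant multiple of) the $\theta$-Whittaker function of a twisted principal series. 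The point of the two cases in the definition of $D(\alpha,y)$ is precisely that the twist depends only on $\eta_\pi(-y)$ (for $ord(y)=0$) or is quadratically trivial (for $ord(y)=1$) after accounting for the square class; I would track the Weil-index factor $\gamma_\psi$ carefully here, since that is where \eqref{gammaprop} enters and where the $\eta_\pi(-y)$ and the $(1-q^{-1}\alpha_i^2)$ versus $(1+\eta_\pi(-y)q^{-1/2}\alpha_i)$ dichotomy gets produced.

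Next I would quote the explicit Casselman--Shalika-type formula of Bump--Friedberg--Hoffstein for the $\theta$-spherical Whittaker function on $\mspn$ — this is the $y=1$ instance of \eqref{nor orbits} — and feed the twisted data $\chi_{\alpha}\cdot\eta_y$ into it. The Weyl-group orbit of an unramified character stays unramified, and the $W_{2n}$-action on $\alpha$ is the standard signed-permutation action on $R$; so the antisymmetrizer $A$ and the Weyl denominator $\Delta(\alpha)=A(\prod\alpha_i^i)$ behave equivariantly. The effect of the quadratic twist by $\eta_y$ on the spherical vector is multiplication of each $\alpha_i$ by the sign $\eta_\pi(-y)$ (in the $ord(y)=0$ case) in the local factors, or a genuine shift to the ``unnormalized'' orbit (in the $ord(y)=q^{-1}$ case); matching this against the normalization constant $D(\alpha,y)^{-1}$ is what yields the clean closed forms \eqref{nor orbits} and \eqref{unnor orbits}, with the factor $\prod_i(1-\eta_\pi(-y)q^{-1/2}\alpha_i^{-1})$ or its trivial analogue surviving inside the antisymmetrizer. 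The support condition $0\le k_1\le\cdots\le k_n$ is inherited verbatim from the BFH formula, since translation by $i(y)$ with $y$ a unit does not move $t$ out of $\tspn$ and for $ord(y)=1$ the relevant coset bookkeeping is absorbed into $\delta(t)^{1/2}\gamma_\psi^{-1}(t)$.

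Finally, symmetry \eqref{func eq} is immediate once the explicit formula is in hand: both $\Delta(\alpha)$ and the bracketed antisymmetrized sum in \eqref{nor orbits}/\eqref{unnor orbits} are, by construction, alternating under $W_{2n}$ — applying $w$ multiplies numerator and denominator by the same sign $(-1)^{length(w)}$ — so the quotient is $W_{2n}$-invariant, and the prefactor $\delta(t)^{1/2}\gamma_\psi^{-1}(t)$ does not involve $\alpha$ at all. The main obstacle, I expect, is not the conceptual reduction but the precise normalization: pinning down the constant relating $W^0_{\alpha,\psi,y,\theta}$ to the BFH function, including the exact power of $q$ and the exact sign, and verifying that $D(\alpha,y)$ as defined is the correct normalizer making $W^0_{\alpha,\psi,y,\theta}(I_{2n},1)$ come out to the value forced by the formula (i.e.\ that the antisymmetrized product really does specialize correctly at $t=I_{2n}$, where $k_i=0$). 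This requires care with the conductor-zero hypothesis on $\psi$, the fact that $\gamma_\psi(\Of^*)=1$, and the $e=[\Of:2\Of]$ bookkeeping that appears in the $n=1$ comparison in the introduction; the cases $ord(y)=0$ and $ord(y)=1$ must be normalized separately, which is exactly why $D(\alpha,y)$ is piecewise-defined.
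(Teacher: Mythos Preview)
Your approach for $y \in \Of^*$ is essentially the paper's: one rewrites $(\chi_\alpha)_\psi$ as $(\chi_{\pm\alpha})_{\psi_y}$ via \eqref{psi cha} and \eqref{unram quad}, notes that $\psi_y$ is still normalized so the BFH theorem applies verbatim with $\psi$ replaced by $\psi_y$, and tracks the resulting sign changes through $\Delta$ and the antisymmetrizer. One caveat: the integral defining $W^0_{\alpha,\psi,y,\theta}$ lives entirely in $\mspn$ and contains no $(i(y),1)$ factor. The translation-by-$i(y)$ maneuver you invoke from Lemma~\ref{basis lemma again} is a $\mgspn$ device and does not literally apply here, since $f^0_{(\chi_\alpha)_\psi}$ is only defined on $\mspn$ and $(i(y),1)\notin\mspn$. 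What actually does the work in the unit case is the change of additive character, not a group translation.

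The genuine gap is the case $ord(y)=1$. Your reduction to a twisted principal series fails here: $\eta_\pi$ is the \emph{ramified} quadratic character, so $(\chi_\alpha\eta_\pi)_\psi$ is not unramified and the corresponding principal series has no $\mkspn$-fixed vector---there is no spherical Whittaker function to which the BFH formula could be applied. Equivalently, $\psi_y$ now has conductor $-1$, so the normalization hypothesis underlying \cite{BFH} is violated on the long root. The paper does not reduce this case to BFH at all; instead it redoes the rank-one $\msl$ Jacquet integral directly with the conductor-$(-1)$ character $\psi_\pi$, obtaining the factor $(1-q^{-1}\alpha_1^2)$ in place of $(1+\eta_\pi(-y)q^{-1/2}\alpha_1)$. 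This is precisely why both $D(\alpha,y)$ and the numerator in \eqref{unnor orbits} look structurally different from \eqref{nor orbits}. The paper then re-runs the remaining steps of the BFH argument (functional equation, support condition, Iwahori expansion, evaluation at the identity) and verifies that each goes through for this Whittaker character. Your phrase ``genuine shift to the unnormalized orbit'' names the phenomenon but does not supply the missing rank-one computation, which is the actual content of this case.
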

\begin{proof} We first prove \eqref{nor orbits}. For $y=1$ this is exactly Theorems 1.1 and 1.2 of \cite{BFH}. Note that we denote by $\theta$ what these authors denote by $\theta^{-1}$, see page 384 of \cite{BFH}. However, this difference is canceled since we use $\theta^{-1}$ rather then $\theta$ in the Jacquet integral.  Given that the Whittaker character $\theta_y$ is normalized, the only relation of it with $\psi$ used in the proof of these Theorems is that $\psi_y$, which is the restriction of $\theta_y$ to the long simple root, satisfies $\gamma_\psi=\gamma_{\psi_y}$. Recall that $\gamma_{\psi_y}=\gamma_{\psi}\eta_y$. This explains why the result for any $y \in {\Of^*}\!^2$ follows from Theorem 1.1 and 1.2 of \cite{BFH}. For $y\in \Of^*$ which are not squares one can still use the results in \cite{BFH}: From \eqref{unram quad} and \eqref{psi cha} it follows that
$$ (\chi_\alpha)_\psi=(\chi_{(y,\pi)\alpha})_{\psi_y}=(\chi_{-\alpha})_{\psi_y}, \, \, \, \, \gamma_{\psi_y}(t)=\gamma_{\psi}(t)(-1)^{k_1+k_2+\ldots k_n}.$$
Also,
\begin{equation} \label{to be used again} \Delta(\alpha)={(-1)}^{(1+2+\ldots+n)}\Delta(-\alpha).\end{equation}
This implies that by changing $\theta$ to $\theta_y$, $\gamma_\psi$ to $\gamma_{\psi_y}$ and $\alpha_i$ to $-\alpha_i$ in page 384-386 of \cite{BFH} one obtains the result above.

We shall now explain how to modify the proofs in \cite{BFH} to prove \eqref{unnor orbits}. We start by establishing the absolute convergence of the Jacquet integral in some cone in $\C^n$, its holomorphic extension to $\C^n$ and the functional equation satisfied by $W^0_{\chi_{\alpha,\psi},y,\theta}$, namely \eqref{func eq}. The only modification one needs to do in the proof of Theorem 1.1 of \cite{BFH} is the $\msl$ computation. By a direct computation, similar to one given in page 387 of \cite{BFH}, one proves that if $Re(\alpha)>0$ then for $t=u\pi^{k_1}$, where $u \in \Of^*$,
$$\int_\F f^0_{(\chi_{\alpha_1})_\psi}
\Biggl( \bigl(\begin{pmatrix} _{0} & _{1}\\_{-1} &
_{0}\end{pmatrix}\begin{pmatrix} _{1} & _{x}\\_{0} &
_{1}\end{pmatrix},1\bigr) \bigl(\begin{pmatrix} _{t} & _{0}\\_{0} &
_{t^{-1}}\end{pmatrix},1 \bigr) \Biggr) \psi_\pi^{-1}(x) dx=$$
$$(1-q^{-1}\alpha_1^2) \delta(t)^{\half} \gamma_{\psi}^{-1}(t)\begin{cases}  \frac{(a_1^{k_1+1}-a_1^{-k_1-1})}{\alpha_1-\alpha_1^{-1}}\ & 0\leq k_1  \\ 0 & othewise \end{cases}. $$
Since the restriction of $\theta_{y}$ to the short simple roots are normalized the rest of the argument goes word for word as the proof of Theorem 1.1 of \cite{BFH}. Next, the fact that $W^0_{\alpha,\psi,y,\theta}(t,1)$ vanishes unless $0\leq k_1 \leq k_2 \leq \ldots \leq k_n$ is proven by the same simple argument that is given in page 392 of \cite{BFH}. Indeed, this argument works for any Whittaker character under the assumption that its restriction to any short simple root subgroup is non-trivial on $\Pf^{-1}$ and that its restriction to the long simple root subgroup is non-trivial on $\Pf^{-2}$. The main remaining ingredient in the proof is an expansion of $f^0_{\chi_{\alpha,\psi}}$ as a linear combination of Iwahori fixed vectors. This step is independent of the Whittaker character. Last, since $\theta_y$ is trivial on $N_{2n}(\Of)$ it follows that
$$\int_{N_{2n}(\F)} \phi_J(Ju,1) \theta_{i(y)}^{-1}(u) \, du=1,$$ where $\phi_J$ is the Iwahori fixed vector defined in page 392 of \cite{BFH}. This gives an exact analog to (3.4) in page 395 of \cite{BFH}. From this formula and from the functional equation established already, the explicit formula \eqref{unnor orbits} follows exactly as in pages 395-396 of \cite{BFH}.
\end{proof}
Note that as in \cite{BFH}, the normalizing factors $D(\alpha,y)$ are the values of the Jacquet integrals at the identity.

\section{Unramified Principal series representation of $\mgspn$.} \label{last}

\subsection{A standard model}
Let $\omega$ be a genuine character of $Z \bigl(\mtgspn \bigr)$. We call $\omega$, $\tau(\omega)$ and $I(\omega)$ unramified if the restriction of $\omega$ to $Z \bigl(\mtgspn \bigr) \cap \mkgspn$ is trivial. Note that by Lemma \ref{2 split}, this definition does not depend on $\eta$. From the fourth part of Lemma \ref{ind rep} it follows that if $\omega$ is unramified then exactly two of the four elements of $E(\omega)$ are trivial on $ \mtgsppn \cap \mkgspn$. We call these two extensions, which do not depend on $\eta$, standard extensions. If $(\chi \boxtimes \xi)_\psi$ is one of them we call $i \bigl((\chi \boxtimes \xi)_\psi \bigr)$ and $I \bigl((\chi \boxtimes \xi)_\psi \bigr)$ standard models for $\tau(\omega)$ and $I(\omega)$ respectively. Given that $\psi$ has an even conductor, a genuine character $(\chi \boxtimes \xi)_\psi$ of $\mtgsppn$ has a trivial restriction to $\mtgsppn  \cap \mkgspn$ if and only if it has the form
$$(\chi \boxtimes \xi)_\psi=(\alpha \boxtimes \beta)^+_ \psi={(\chi_\alpha \boxtimes \xi_\beta)}_\psi,$$
where $\beta \in \C^*$ and $\xi_\beta$ is the unramified character of $\F^*$ defined by $x \mapsto \beta^{ord(x)}$.
Note that the restriction of $(\chi_\alpha \boxtimes \xi_\beta)_\psi$ to $\mtspn$ is $(\chi_\alpha)_\psi$. Also note that the two standard extensions  ${(\chi_\alpha \boxtimes \xi_\beta)}_\psi$ and ${(\chi_{\alpha'} \boxtimes \xi_{\beta'})}_\psi$ of $\omega$  satisfy
$${(\chi_{\alpha'} \boxtimes \xi_{\beta'})}_\psi= ^{(i(u_{_0},1))}\! \! {(\chi_\alpha \boxtimes \xi_\beta)}_\psi ={(\chi_{-\alpha} \boxtimes \xi_{(-1)^n\beta})}_\psi.$$
Thus, if $(\alpha \boxtimes \beta)^+_ \psi$ is one of the two standard extensions of an unramified genuine character $\omega$ of $Z\bigl(\mtgspn)$ to $\mtgsppn$ then the two non standard extensions are $$(\alpha \boxtimes \beta)^-_ \psi= ^{(\i(\pi),1)} \! \! \bigl((\alpha \boxtimes \beta)^+_ \psi\bigr)$$ and $(-\alpha \boxtimes (-1)^n\beta)^-_ \psi$.

Remark: Let $\omega$ be a genuine unramified character of $Z \bigl(\mtgspn \bigr)$. Let $E'(\omega)$ be the set of restrictions of the elements of $E(\omega)$ to $\mtspn$. Note that $E'(\omega)$ has four elements. From Section 3.3 of \cite{Sz 5} it follows that the restriction of $I(\omega)$ to $\mspn$ is isomorphic to $$\bigoplus_{\chi \in E'(\omega)} Ind_{\overline{B_{2n}(\F)}}^\mspn \chi.$$
Exactly two of the four summands here have a $\mkspn$ invariant element. These are the two representations induced from restrictions of standard extensions of $\omega$. The other two summands have a $\mkspn^{(i(\pi),1)}$ invariant element. See section 2.6 of \cite{GS}.

\begin{lem}
Let $\omega$ be a genuine character of $Z \bigl(\mtgspn \bigr)$. $\tau(\omega)$ has a non trivial $\mkgspn$ invariant subspace if and only if $\omega$ unramified. In this case the dimension of the invariant subspace is 1. In particular, if $\eta_1\neq \eta_2$ are the two quadratic characters of $\Of^*$ then $\tau(\omega)$ contains a fixed vector under the action of $K^{\eta_1}_{2n}(\F)$ if and only if it contains a fixed vector under the action of $K^{\eta_2}_{2n}(\F)$.
\end{lem}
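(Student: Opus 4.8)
The plan is to read off $\dim\tau(\omega)^{\mkgspn}$ from the realization $\tau(\omega)=Ind_{\mtgsppn}^{\mtgspn}(\chi\boxtimes\xi)_\psi$ of Lemma \ref{mc me}, for an arbitrary $(\chi\boxtimes\xi)_\psi\in E(\omega)$. Since $\tau(\omega)$ is a representation of $\mtgspn$, an ``$\mkgspn$-invariant subspace'' means a subspace invariant under $H:=\mkgspn\cap\mtgspn$, and the claim amounts to showing that $\dim\tau(\omega)^{H}$ is $1$ when $\omega$ is unramified and $0$ otherwise; the last sentence of the lemma will then drop out once we observe that the data in this computation do not depend on $\eta$.

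First I would identify $H$ explicitly. It is the image of $\{[t,y]\mid y\in\Of^*,\ t\in T_{GL_n}(\Of)\}$ under $k\mapsto(k,\iota^{\eta}_{2n}(k))$. The $\spn$-part $diag(t,t^{-1})$ of $[t,y]$ lies in $T_{Sp_{2n}}(\Of)\subseteq\bspn\cap\kspn$, on which $\iota_{2n}\equiv1$ (Section 1.4 of \cite{Sz 4}), so by \eqref{map one} $\iota^{\eta}_{2n}([t,y])=\eta(y)$ and
$$H=\{([t,y],\eta(y))\mid y\in\Of^*,\ t\in T_{GL_n}(\Of)\}.$$
Under the isomorphism $\mtgspn/\mtgsppn\cong\F^*/{\F^*}^2$ induced by the similitude character, $H$ maps onto the index-two subgroup $\Of^*{\F^*}^2/{\F^*}^2=\{1,u_{_0}\}$, with kernel $H\cap\mtgsppn=\mkgspn\cap\mtgsppn=\{([t,y],1)\mid y\in{\Of^*}^2,\ t\in T_{GL_n}(\Of)\}$, a description not involving $\eta$; the same holds for $Z\bigl(\mtgspn\bigr)\cap\mkgspn\subseteq\mkgspn\cap\mtgsppn$, hence for the notion ``$\omega$ unramified''. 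This reduces the last sentence of the lemma to the main assertion.

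Next I would apply Mackey's formula. Because $\mtgsppn$ is the centralizer of the normal subgroup $\mtspn$ of $\mtgspn$, it is itself normal in $\mtgspn$; and $H$ has exactly two orbits on $\mtgspn/\mtgsppn$, with coset representatives $(I_{2n},1)$ and $(i(\pi),1)$. Hence
$$\tau(\omega)\big|_{H}\simeq Ind_{\mkgspn\cap\mtgsppn}^{H}\bigl((\chi\boxtimes\xi)_\psi\big|_{\mkgspn\cap\mtgsppn}\bigr)\oplus Ind_{\mkgspn\cap\mtgsppn}^{H}\bigl({}^{(i(\pi),1)}(\chi\boxtimes\xi)_\psi\big|_{\mkgspn\cap\mtgsppn}\bigr),$$
and by Frobenius reciprocity $\dim\tau(\omega)^{H}$ is the number of the two characters $(\chi\boxtimes\xi)_\psi$ and ${}^{(i(\pi),1)}(\chi\boxtimes\xi)_\psi$ whose restriction to $\mkgspn\cap\mtgsppn$ is trivial. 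Now by \eqref{model cha} the four elements of $E(\omega)$ are the conjugates ${}^{(i(c),1)}(\chi\boxtimes\xi)_\psi$ for $c\in\{1,u_{_0},\pi,\pi u_{_0}\}$; since $\eta_{u_{_0}}$ is trivial on $\Of^*$ by \eqref{unram quad}, and hence on $\mkgspn\cap\mtgsppn$, conjugation by $(i(u_{_0}),1)$ does not change the restriction to $\mkgspn\cap\mtgsppn$. Thus $E(\omega)$ splits into two pairs with equal restriction to $\mtgsppn\cap\mkgspn$, represented by $(\chi\boxtimes\xi)_\psi$ and ${}^{(i(\pi),1)}(\chi\boxtimes\xi)_\psi$, so
$$\dim\tau(\omega)^{H}=\tfrac{1}{2}\,\#\{\varphi\in E(\omega)\mid\varphi|_{\mtgsppn\cap\mkgspn}=1\}.$$

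Finally I would invoke the facts recorded just before the lemma. If $\omega$ is unramified, then by the fourth part of Lemma \ref{ind rep} exactly two of the four elements of $E(\omega)$ — the standard extensions — are trivial on $\mtgsppn\cap\mkgspn$, so $\dim\tau(\omega)^{H}=1$. If $\omega$ is not unramified, then no $\varphi\in E(\omega)$ is trivial on $\mtgsppn\cap\mkgspn$: otherwise $\omega=\varphi|_{Z(\mtgspn)}$ would be trivial on $Z\bigl(\mtgspn\bigr)\cap\mkgspn\subseteq\mtgsppn\cap\mkgspn$, contrary to assumption; so $\dim\tau(\omega)^{H}=0$. This gives both the equivalence and the one-dimensionality, and the last sentence follows because $\mtgsppn\cap\mkgspn$ and the condition of unramifiedness are the same for $\eta_1$ and for $\eta_2$. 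I expect the delicate point to be the description of $H=\mkgspn\cap\mtgspn$ and the observation that it surjects only onto an index-two subgroup of $\mtgspn/\mtgsppn$ — this is precisely what produces the pairing of $E(\omega)$ and makes the invariant space one- rather than two-dimensional.
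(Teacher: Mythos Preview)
Your proof is correct and follows essentially the same route as the paper: both arguments hinge on the double coset decomposition $\mtgsppn\backslash\mtgspn/\bigl(\mtgspn\cap\mkgspn\bigr)$ with representatives $(I_{2n},1)$ and $(i(\pi),1)$. The paper chooses a standard model from the start and then computes directly with the cocycle to show that the $(i(\pi),1)$-coset cannot support an invariant function, while you package the same computation via Mackey's formula and Frobenius reciprocity, reducing to the count $\tfrac{1}{2}\#\{\varphi\in E(\omega):\varphi|_{\mtgsppn\cap\mkgspn}=1\}$ and then invoking the ``exactly two standard extensions'' fact recorded just before the lemma. Your organization has the minor advantage of handling the unramified and ramified cases, and the independence from $\eta$, in a single uniform statement; the paper's direct cocycle calculation is more hands-on but amounts to the same thing.
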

\begin{proof}
Clearly, the existence of a non-trivial $\mkgspn$ invariant subspace implies that $\omega$ is unramified. Conversely, assume that $\omega$ is unramified and that $i\bigl((\chi \boxtimes \xi)_\psi \bigr)$ is a standard model for $\tau(\omega)$. As a set of representatives of $\mtgsppn \backslash \mtgspn / \mtgspn \cap \mkgspn$ we can take $$\{(I_{2n},1), \, (i(\pi),1)\}.$$ Note that if $f \in i\bigl((\chi \boxtimes \xi)_\psi \bigr)$ is invariant under $\mtgspn \cap \mkgspn$, it follows from the first assertion of Lemma \ref{coc prop} that for any $([u,1],1) \in \mtgsppn \cap \mkgspn$ such that $det(u) \notin {F^*}^2$
$$f(i(\pi),1)=f\bigl((i(\pi),1)([u,1],1)\bigr)=f\bigl(([u,1],-1)(i(\pi),1)\bigr)=-f(i(\pi),1).$$
Thus, $(i(\pi),1)$ can not support a $\mtgspn \cap \mkgspn$ invariant function. On the other hand,
$$f(t)=\begin{cases}(\chi \boxtimes \xi)_\psi(t^{^+})  & t=t^{^+} \! u \, \,  \, where \, \, \,  t^{^+} \in \mtgsppn, \, u \in  \mtgspn \cap \mkgspn  \\ 0  &  otherwise \end{cases}$$
defines a non zero $\mtgspn \cap \mkgspn$ invariant function inside $i\bigl((\chi \boxtimes \xi)_\psi \bigr)$.
\end{proof}

From this Lemma and from the Iwasawa decomposition, it follows that an unramified genuine principal series representation $I(\omega)$ of $\mgspn$ contains a 1 dimensional $\mkgspn$ invariant subspace. If $I\bigl((\chi \boxtimes \xi)_\psi \bigr)$ is a standard model then
\begin{equation} \label{exp sph} f^\eta_{(\chi \boxtimes \xi)_\psi}(g)=\begin{cases} \delta^{\half}(b)(\chi \boxtimes \xi)_\psi(b)  & g=bk, \, where \, \, b\in \mpb, \, k\in \mkgspn \\ 0 & othewise \end{cases}\end{equation}
is the unique $\mkgspn$ invariant function inside $I\bigl((\chi \boxtimes \xi)_\psi \bigr)$ such that $f^\eta_{(\chi \boxtimes \xi)_\psi}(I_{2n},1)=1$.

\subsection{Main result}
Assume that $\psi$ and $\theta$ are as in Section \ref{BFH sec}. For $\alpha \in {\C^*}^n, \, \beta \in {\C^*}$ define two functions $${k^{\eta \, \pm}_{\alpha, \beta}}: \mtgspn \rightarrow \C$$ by

\begin{equation} \label{main def 1} {k^{\eta \, +}_{\alpha, \beta}}(h)=\epsilon \eta \bigl(\lambda(h) \bigr)\gamma_\psi^{-1}(b^n)\beta^l \Delta^{-1}(\alpha)\delta(t)^{\half} \gamma_{\psi}^{-1}(t) \times $$ $$\begin{cases}  A\bigl(\prod_{i=1}^n (1-\eta_\pi(-1) q^{\frac{-1}{2}}\alpha_i^{-1}) \alpha_i^{k_i+i}\bigr) & 0\leq k_1 \leq k_2 \leq \ldots \leq k_n, \, m \!= \! 0\\ 0 & othewise \end{cases} \end{equation} and

\begin{equation} \label{main def 2} {k^{\eta \, -}_{\alpha, \beta}}(h)_{{(\alpha \boxtimes \beta)}_\psi,\pi,\theta}(h)=\epsilon \eta \bigl(\pi\lambda(h) \bigr)\gamma_\psi^{-1}(b^n)\beta^l \Delta^{-1}(\alpha)\delta(t)^{\half} \gamma_{\psi}^{-1}(t) \times $$ $$\begin{cases}  A\bigl(\prod_{i=1}^n \alpha_i^{k_i+i}\bigr) & 0\leq k_1 \leq k_2 \leq \ldots \leq k_n, \, m \!= \! 1\\ 0 & othewise \end{cases}.\end{equation}
Here
\begin{equation} \label{h element} h=\bigl(i(\pi^{-m}),1 \bigr) \bigl(bI_{2n},1 \bigr) \bigl([t,1], 1\bigr) \bigl(i(u),\epsilon \bigr) \end{equation}

where $m \in \{ 0,1 \}, \, b \in \F^*, \, u \in \Of^*$ and $t= diag(a_n,a_{n-1},\ldots,a_1) \in \tgln$ and where we denote $ord(a_i)=k_i, \, ord(b)=l$ (note that $\lambda(h)=b^2u\pi^m$). We now extend these functions to
$${k^{\eta \, \pm}_{\alpha, \beta, \theta}}: \mgspn \rightarrow \C$$ by setting
$${k^{\eta \, \pm}_{\alpha, \beta, \theta}}\bigl((n,1)hu \bigr)=\theta(n){k^\eta_{\alpha, \beta}}^{\pm}(h),$$
where $n \in \zspn, \, u \in \mkgspn$. From the Theorem below it follows that these extensions are well defined. Observe that this definition implies that these are symmetric functions, i.e,
\begin{equation} \label{this is it}{k^{\eta \, \pm}_{\alpha, \beta, \theta}}={k^{\eta \, \pm}_{^w \! \alpha, \beta, \theta}} \end{equation} for any $w \in W_{2n}$.

\begin{thm} \label{main} Let $I(\omega)$ be a genuine unramified principal series representation of $\mgspn$. Suppose
$$E(\omega)=\{ (\alpha \boxtimes \beta)^ {\pm}_ \psi, \, (-\alpha \boxtimes (-1)^n\beta)^ {\pm}_ \psi \}.$$  Then, the set

\begin{equation} \label{main basis} \{ {k^{\eta \, \pm}_{\alpha, \beta, \theta}}, \, {k^{\eta \, \pm}_{-\alpha, (-1)^n\beta, \theta}} \} \end{equation}
form a symmetric spanning set for the space of genuine Spherical $\theta$-Whittaker functions attached to $I(\omega)$.
\end{thm}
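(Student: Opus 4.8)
The plan is to combine the decomposition of $I(\omega)$ over $\mspn$ with the explicit symmetric Whittaker formulas of Lemma \ref{BFH res} and the basis description of Lemma \ref{basis lemma} (equivalently Lemma \ref{basis lemma again}). First I would fix a standard model $I\bigl((\chi_\alpha \boxtimes \xi_\beta)_\psi\bigr)$ for $I(\omega)$, and recall from the Remark following the standard-model discussion that as an $\mspn$ module it decomposes as $\bigoplus_{c} I^{c}\bigl((\chi_\alpha \boxtimes \xi_\beta)_\psi\bigr)$ with $I^{c} \simeq I\bigl((\chi_\alpha \eta_c)_\psi\bigr)$, where $c$ runs over the representatives $\{1,u_0,\pi,\pi u_0\}$ of $\F^*/{\F^*}^2$. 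Since the spherical vector $f^\eta_{(\chi_\alpha \boxtimes \xi_\beta)_\psi}$ of \eqref{exp sph} is supported (in the similitude variable) on ${\F^*}^2$, it lies entirely in the $c=1$ summand, and its restriction there is precisely the $\mspn$-spherical vector $f^0_{(\chi_\alpha)_\psi}$ of $I\bigl((\chi_\alpha)_\psi\bigr)$ up to the explicit scalars coming from $\gamma_\psi$, $\eta$, $\delta^{1/2}$ and the $\beta^l$ factor recorded in the definitions \eqref{main def 1}, \eqref{main def 2}.

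Next I would evaluate each basis Whittaker map $f \mapsto W^y_f$ of Lemma \ref{basis lemma again} on the spherical vector. By the support computation above, only the summand $I^{y}\bigl((\chi_\alpha \boxtimes \xi_\beta)_\psi\bigr) \simeq I\bigl((\chi_\alpha \eta_y)_\psi\bigr)$ contributes, and on that summand the Jacquet integral defining $W^y_f$ reduces, after the change of variables and using that $\theta_y$ is conjugate to the normalized character attached to $\psi_y$, to the $\mspn$ Jacquet integral computed in Lemma \ref{BFH res} for the representation $I\bigl((\chi_\alpha \eta_y)_\psi\bigr)$. Using \eqref{unram quad}, \eqref{psi cha}, and the identities $(\chi_\alpha)_{\psi_y} = (\chi_{\pm\alpha})_\psi$ from the proof of Lemma \ref{BFH res}, the four values $y \in \{1,u_0,\pi,\pi u_0\}$ give, up to renaming $\alpha \mapsto -\alpha$ and $\beta \mapsto (-1)^n\beta$ and up to the $\eta$-twist $\eta(\lambda(h))$ or $\eta(\pi\lambda(h))$, exactly the four functions listed in \eqref{main basis}. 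One also must check the Iwasawa-type reduction formula $k^{\eta\,\pm}_{\alpha,\beta,\theta}\bigl((n,1)hu\bigr) = \theta(n)\,k^{\eta\,\pm}_{\alpha,\beta}(h)$ is well defined: this follows because the $\mkgspn$-invariance of the spherical vector forces the Whittaker function to transform by $\theta$ on the left under $\zspn$ and to be right-invariant under $\mkgspn$, while the element $h$ of \eqref{h element} ranges over a set of representatives of $\zspn \backslash \mgspn / \mkgspn$ by the Iwasawa decomposition together with the normalization $\iota^\eta_{2n}$ being trivial on $\tgsppn \cap \kgspn$.

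The symmetry assertion \eqref{this is it} is then immediate: each $W^0_{\alpha,\psi,y,\theta}$ is symmetric by the last sentence of Lemma \ref{BFH res}, and the extra scalar factors $\gamma_\psi^{-1}(b^n)\beta^l\eta(\lambda(h))$ do not involve $\alpha$, so $k^{\eta\,\pm}_{\alpha,\beta,\theta} = k^{\eta\,\pm}_{{}^w\!\alpha,\beta,\theta}$ for all $w \in W_{2n}$. That these four functions span the space of genuine spherical $\theta$-Whittaker functions attached to $I(\omega)$ follows because, by Lemma \ref{basis lemma}, the maps $f \mapsto \widetilde{W}^a_f$ form a \emph{basis} of $\mathrm{Hom}_{\mgspn}\bigl(I(\omega), \mathrm{Ind}^{\mgspn}_{\overline{N_{2n}(\F)}}\theta\bigr)$, hence their images on the (up to scalar unique) spherical vector span the full space of spherical Whittaker functions; and the change-of-basis between $\{\widetilde{W}^a_f\}$ and $\{W^y_f\}$ is the invertible one recorded in the proof of Lemma \ref{basis lemma again}.

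The main obstacle I expect is the bookkeeping in the second step: carefully tracking how the Weil-index factors $\gamma_\psi$, the quadratic characters $\eta_\pi(-y)$, $\eta_y$, and the normalization $\eta(\lambda(h))$ transform under the substitution $\psi \rightsquigarrow \psi_y$ and $\alpha_i \rightsquigarrow \pm\alpha_i$, so that the four cases $y\in\{1,u_0,\pi,\pi u_0\}$ land exactly on the two $\pm$ variants of $k^\eta_{\alpha,\beta,\theta}$ and $k^\eta_{-\alpha,(-1)^n\beta,\theta}$ with the stated constants — in particular reconciling the $\ab y\ab=1$ case (which produces the factor $1-\eta_\pi(-1)q^{-1/2}\alpha_i^{-1}$ after absorbing $\eta_\pi(-y)$ into the renaming) with the $\ab y\ab = q^{-1}$ case (which produces no such factor). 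Everything else is a routine assembly of results already in the excerpt.
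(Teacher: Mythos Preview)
Your strategy is the paper's: apply the four Whittaker maps of Lemma~\ref{basis lemma again} to the spherical vector, reduce each to an $\mspn$ spherical Whittaker computation via Lemma~\ref{BFH res}, and track the scalar factors to land on \eqref{main basis}. The paper does exactly this, verifying four identities it labels \eqref{main 1}--\eqref{main 4}; your final paragraph correctly identifies the sign bookkeeping as the only substantive work.

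One point in your setup needs correction, and it is not cosmetic. The spherical vector $f^\eta$ is \emph{not} supported on similitude ${\F^*}^2$: by \eqref{exp sph} its support is $\mbgsppn\cdot\mkgspn$, and since $\lambda\bigl(\mkgspn\bigr)=\Of^*$ the similitude support is $\Of^*{\F^*}^2$, i.e.\ the two square classes $\{1,u_0\}$. Thus $f^\eta\in I^1\oplus I^{u_0}$, not $I^1$ alone. If you combine your one-coset claim with ``only $I^y$ contributes to $W^y$'' you would conclude, for instance, that $W^{u_0}_{f^\eta}(I_{2n},1)=0$, whereas in fact it equals $D^\eta(\alpha,u_0)\neq 0$; and you would get each $W^y_{f^\eta}$ supported on a single similitude coset rather than on the two cosets $m=\mathrm{ord}(y)$ recorded in \eqref{main def 1}--\eqref{main def 2}. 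The paper sidesteps the $I^c$ language at this point and argues directly from \eqref{exp sph}: the support condition gives the vanishing for $m\neq\mathrm{ord}(y)$, and then one pulls the central factor $(bI_{2n},1)$ and the $\mkgspn$ factor $(i(u),\epsilon)$ to the outside (producing $\gamma_\psi^{-1}(b^n)\beta^l$ and $\epsilon\eta(u)$), so that what remains is the restriction of $f^\eta$ to $\mspn$, namely $f^0_{(\chi_\alpha)_\psi}$, and Lemma~\ref{BFH res} applies. With the support statement corrected to $\Of^*{\F^*}^2$, your $I^c$ framing reaches the same reduction.
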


\begin{proof} Denote $$Y=\{1,u_{_0},\pi,\pi u_{_0}\}.$$ As explained in Section \ref{gen not},
$Y$ is a set of representatives of
$F^* / {F^*}^2$. For $y \in Y$ define

$$D^{\eta}(\alpha,y)=\eta(y\pi^{-ord(y)})D(\alpha,y)$$
and define

$$W^{\eta}_{{(\alpha \boxtimes \beta)}_\psi,y,\theta}(g)={D^{\eta}(\alpha,y)}^{-1}\int_{N_{2n}(\F)} f^{\eta}_{{(\chi_\alpha \boxtimes \xi_\beta)}_\psi}\bigl((J_{_{2n}}n,1)(i(y),1)g\bigr) \theta_{i(y)}^{-1}(n) \, dn.$$
By Lemma \ref{basis lemma again}. $\{W^{\eta}_{{(\alpha \boxtimes \beta)}_\psi,y,\theta} \mid y \in Y \}$
is a spanning set for the space of genuine Spherical $\theta$-Whittaker functions attached to $I(\omega)$. Thus, the proof of this Theorem amounts to showing  the following:
\begin{equation} \label{main 1} W^{\eta}_{{(\alpha \boxtimes \beta)}_\psi,1,\theta}(h)={k^{\eta \, +}_{\alpha,\beta, \theta}}(h), \end{equation}

\begin{equation} \label{main 2} W^{\eta}_{{(\alpha \boxtimes \beta)}_\psi,\pi,\theta}(h)={k^{\eta \, -}_{\alpha, \beta, \theta}}(h), \end{equation}

\begin{equation} \label{main 3} W^{\eta}_{{(\alpha \boxtimes \beta)}_\psi,u_{_0},\theta}=W^{\eta}_{{(\chi_{-\alpha} \boxtimes \xi_{(-1)^n\beta})}_\psi,1\theta}\end{equation}

and \begin{equation} \label{main 4} W^{\eta}_{{(\alpha \boxtimes \beta)}_\psi,\pi u_{_0},\theta}=W^{\eta}_{{(\chi_{-\alpha} \boxtimes \xi_{(-1)^n\beta})}_\psi,\pi,\theta}.\end{equation}
Here $h$ is the $\mtgspn$ element defined in \eqref{h element}.

By \eqref{exp sph}, $f^\eta_{(\chi_\alpha \boxtimes \xi_\beta)_\psi}(g)$  vanishes if $\lambda(g) \not \in \Of^* {\F^*}^2$. This implies that
$W^{\eta}_{{(\alpha \boxtimes \beta)}_\psi,y,\theta}(h)$ vanish if $m \neq ord(y)$. We compute $W^{\eta}_{{(\alpha \boxtimes \beta)}_\psi,1,\theta}(h)$. Recall that $Z \bigl(\mgpspn \bigr)= \overline{\F^*I_{2n}}$ and that
$$f^{\eta}_{{(\chi_\alpha \boxtimes \xi_\beta)}_\psi}\bigl(g (i(k),\epsilon) \bigr)=\epsilon \eta(k)f^{\eta}_{{(\chi_\alpha \boxtimes \xi_\beta)}_\psi}(g).$$
Thus, if $m=0$,
$$W^{\eta}_{{(\alpha \boxtimes \beta)}_\psi,1,\theta}(h)=\epsilon\eta(u)\gamma_\psi^{-1}(b^n)\beta^l W^{\eta}_{{(\alpha \boxtimes \beta)}_\psi,1,\theta}\bigl(t,1 \bigr).$$
Since the restriction of $W^{\eta}_{{(\alpha \boxtimes \beta)}_\psi,1,\theta}$ to $\mspn$ is  $W^0_{\alpha,\psi,1,\theta}$, \eqref{main 1} now follows from \eqref{nor orbits}. To compute $W^{\eta}_{{(\alpha \boxtimes \beta)}_\psi,u_{_0},\theta}(h)$ we note that
$$\bigr(i(u_{_0}),1\bigl)h\bigr(i(u_{_0}),1\bigl)^{-1}=\bigl(I_{2n}, (u_{_0},b^na_1,a_2\ldots a_n)\f \bigr)h $$
and that $$(u_{_0},b^na_1,a_2\ldots a_n)\f={(-1)}^{(ln+k_1+k_2\ldots +k_n)}.$$
Thus, arguing as before, if $m=0$,
$$W^{\eta}_{{(\alpha \boxtimes \beta)}_\psi,u_{_0},\theta}(h)=
{(-1)}^{(ln+k_1+k_2\ldots +k_n)}\eta(u)\gamma_\psi^{-1}(b^n)\beta^l \Delta^{-1}(\alpha)\delta(t)^{\half}\gamma_\psi^{-1}(t)$$
$$\begin{cases}  A\bigl(\prod_{i=1}^n (1-\eta_\pi(-u_{_0}) q^{\frac{-1}{2}}\alpha_i^{-1}) \alpha_i^{k_i+i}\bigr) & 0\leq k_1 \leq k_2 \leq \ldots \leq k_n \\ 0 & othewise \end{cases}.$$
\eqref{main 3} follows now from \eqref{unram quad} and  \eqref{to be used again}. \eqref{main 2} and \eqref{main 4} are proven by similar arguments which ultimately utilize \eqref{unnor orbits}.
\end{proof}
Note that from the proof above it follows that for $y \in Y.$
$$D^{\eta}({\alpha},y)=\int_{N_{2n}(\F)} f^{\eta}_{{(\chi_\alpha \boxtimes \xi_\beta)}_\psi}\bigl((J_{_{2n}}n,1)(i(y\pi^{-ord(y)}),1)g\bigr) \theta_{y}^{-1}(n) \, dn.$$

\begin{thm} \label{Final}If $n$ is odd then the spanning set constructed in Theorem \ref{main} has an additional property: Let $\omega'$ be the central character of $I(\omega)$. $\omega'$ is the restriction of $\omega$ to $Z \bigl(\mgspn \bigr)$. Define $\Omega$ to be the set of four extensions of $\omega'$ to $Z \bigl(\mgsppn \bigr)$. For each $\mu$ in $\Omega$ exactly one of the functions in \eqref{main basis} has the property
$$f(tg)=\mu(t)f(g)$$ for any $t \in Z \bigl(\mgsppn \bigr), \, g \in \mgspn$
\end{thm}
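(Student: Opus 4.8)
The statement is the incarnation, for the spherical vector, of Corollary \ref{n odd stru}, and the plan is to deduce it from the same computation. Recall from the proof of Theorem \ref{main} --- equations \eqref{main 1}--\eqref{main 4} --- that the four functions in \eqref{main basis} are exactly the normalized spherical Whittaker functions $W^{\eta}_{(\alpha\boxtimes\beta)_\psi,y,\theta}$ as $y$ runs over $Y=\{1,u_{_0},\pi,\pi u_{_0}\}$, with $k^{\eta\,+}_{\alpha,\beta,\theta}\leftrightarrow y=1$, $k^{\eta\,-}_{\alpha,\beta,\theta}\leftrightarrow y=\pi$, $k^{\eta\,+}_{-\alpha,(-1)^n\beta,\theta}\leftrightarrow y=u_{_0}$ and $k^{\eta\,-}_{-\alpha,(-1)^n\beta,\theta}\leftrightarrow y=\pi u_{_0}$; each is $D^{\eta}(\alpha,y)^{-1}$ times the image of the normalized spherical vector $f^{\eta}_{(\chi_\alpha\boxtimes\xi_\beta)_\psi}$ of a standard model $I\bigl((\chi_\alpha\boxtimes\xi_\beta)_\psi\bigr)$ of $I(\omega)$ under the Jacquet map $W^{y}$ of Lemma \ref{basis lemma again}. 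Since the equivariance $f(tg)=\mu(t)f(g)$ is unaffected by rescaling $f$ by a nonzero constant, it suffices to attach to each $y\in Y$ a character $\mu_y$ of $Z\bigl(\mgsppn\bigr)$ with $W^{y}_f(tg)=\mu_y(t)W^{y}_f(g)$ for all $t\in Z\bigl(\mgsppn\bigr)$, $g\in\mgspn$, and to check that the four $\mu_y$ are pairwise distinct and all restrict to $\omega'$ on $Z\bigl(\mgspn\bigr)$.

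For the equivariance I would slide a central-torus element through the Jacquet integral. Write $t=(cI_{2n},1)\in Z\bigl(\mgsppn\bigr)=\overline{\F^{*}I_{2n}}$. By Lemma \ref{coc prop}(3), since $\lambda(J_{2n}u)=1$ for $u\in\zspn$ we have $(J_{2n}u,1)^{t}=(J_{2n}u,1)$, and since $n$ is odd, $(i(y),1)^{t}=(i(y),(y,c)_\F)$; hence $(J_{2n}u,1)(i(y),1)\,t=t\,(J_{2n}u,1)(i(y),1)\,(I_{2n},(y,c)_\F)$. Moving $t$ to the far left, applying the left $\mbgsppn$-equivariance of $f\in I\bigl((\chi_\alpha\boxtimes\xi_\beta)_\psi\bigr)$ (on which the modular factor is trivial, $t$ being central in $\mgsppn$) together with the genuineness of $f$, the Jacquet integral defining $W^{y}_f(tg)$ equals $(\chi_\alpha\boxtimes\xi_\beta)_\psi(t)\,(y,c)_\F$ times the one defining $W^{y}_f(g)$. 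Since $(\chi_\alpha\boxtimes\xi_\beta)_\psi(cI_{2n},1)=\xi_\beta(c)\gamma_\psi(c^{n})$ by Lemma \ref{ind rep}(2),(3), one obtains
$$W^{y}_f(tg)=\mu_y(t)\,W^{y}_f(g),\qquad \mu_y(cI_{2n},1)=\xi_\beta(c)\,\gamma_\psi(c^{n})\,(c,y)_\F,$$
which is indeed a character of $Z\bigl(\mgsppn\bigr)$; as $W^{y}_f$ is also $\theta$-equivariant under $\mzspn$, the function $W^{\eta}_{(\alpha\boxtimes\beta)_\psi,y,\theta}$ lies in $Hom_{\mgspn}\!\bigl(I(\omega),Ind^{\mgspn}_{Z(\mgsppn)\times\mzspn}\mu_y\times\theta\bigr)$. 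This paragraph is exactly the computation behind Corollary \ref{n odd stru} and may instead be quoted from its proof, together with the identification of the basis $\{W^{y}\}$ with the basis $\{\widetilde W^{a}\}$ made in the proof of Lemma \ref{basis lemma again}.

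Finally, because $n$ is odd, Lemma \ref{coc prop}(4) gives $Z\bigl(\mgspn\bigr)=\overline{{\F^{*}}^{2}I_{2n}}$, of index $[\F^{*}:{\F^{*}}^{2}]$ in $Z\bigl(\mgsppn\bigr)$, so $\Omega$ has four elements. For $c\in{\F^{*}}^{2}$ we have $(c,y)_\F=1=\gamma_\psi(c^{n})$, so every $\mu_y$ agrees with $\omega'$ on $Z\bigl(\mgspn\bigr)$, i.e. $\mu_y\in\Omega$; and as $y$ runs over the representatives $Y$ of $\F^{*}/{\F^{*}}^{2}$ the quadratic characters $c\mapsto(c,y)_\F$ are pairwise distinct by non-degeneracy of the Hilbert symbol, hence so are the $\mu_y$. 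Therefore $\{\mu_y:y\in Y\}=\Omega$, and each $\mu\in\Omega$ is realized by exactly one of the four functions in \eqref{main basis}, which proves Theorem \ref{Final}. I expect the only delicate point to be the sign bookkeeping in the second paragraph --- tracking the Hilbert-symbol signs produced when the central element crosses the long Weyl element and the representatives $i(y)$ --- which is precisely what is carried out in the proof of Corollary \ref{n odd stru}; everything else is formal. (For $n$ even, $Z(\mgspn)=Z(\mgsppn)$ by Lemma \ref{coc prop}(4), so $|\Omega|=1$ and the assertion is vacuous, which explains the hypothesis that $n$ be odd.)
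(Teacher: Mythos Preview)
Your proof is correct and follows essentially the same route as the paper, which simply states that the result ``is verified at once by direct computation using the cocycle properties given in Lemma \ref{coc prop}'' and that ``in fact it follows from Corollary \ref{n odd stru}.'' You carry out precisely that direct computation --- sliding $(cI_{2n},1)$ through the Jacquet integral via Lemma \ref{coc prop}(3), reading off $\mu_y(cI_{2n},1)=\xi_\beta(c)\gamma_\psi(c^n)(c,y)_\F$, and then checking distinctness and agreement with $\omega'$ on $Z\bigl(\mgspn\bigr)$ --- and you correctly identify this as the content of Corollary \ref{n odd stru} specialized to the spherical vector.
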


\begin{proof}  This is verified at once by direct computation using the cocycle properties given in Lemma \ref{coc prop}. In fact it follows from Corollary \ref{n odd stru}.
\end{proof}
\section{Reducible unramified principal representations} \label{app}
Fix $\psi$, a normalized character of $\F$. Through this section, let $\omega$ be an unramified genuine character of $Z \bigl(\mtgspn \bigr)$ and let $I(\omega)$ be an unramified principal series representation of $\mgspn$ with $I\bigl((\chi_\alpha \boxtimes \xi_\beta)_\psi\bigr)$ and $I\bigl((\chi_{-\alpha} \boxtimes \xi_{(-1)^n\beta})_\psi\bigr)$ as its two standard models.
\begin{lem} \label{r 2 unram}$R(\omega)>1$ if and only if $n$ is even and up to conjugating by elements of $W_{2n}$, $\alpha$ equals
\begin{equation} \label{special alpha} \alpha_{_0}= (\alpha_1, -\alpha_1,\alpha_2, -\alpha_2,\ldots,\alpha_{\frac n 2}, -\alpha_{\frac n 2}) \end{equation}
In this case $R(\omega)=2$.
\end{lem}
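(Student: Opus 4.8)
The plan is to reduce the question about $R(\omega)$ to a combinatorial statement about when an unramified genuine character $(\chi_\alpha)_\psi$ of $\mtspn$ is Weyl-conjugate to its quadratic twist by a non-square $c$, and then to identify exactly which $\alpha$ have this property. First I would recall from Lemma \ref{act on E}(1) that for $n$ odd $R(\omega)$ is always trivial, so I may assume $n$ even; this gives the forward implication's parity constraint for free. For $n$ even, by definition $R(\omega)>1$ means there is $w \in W_{2n}$ and a non-square $c \in \F^*$ with ${}^{(i(c),1)}(\chi_\alpha \boxtimes \xi_\beta)_\psi = {}^w(\chi_\alpha \boxtimes \xi_\beta)_\psi$. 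Using \eqref{model cha} the left side is $(\chi_\alpha \eta_c \boxtimes \xi_\beta \eta_c^n)_\psi = (\chi_\alpha \eta_c \boxtimes \xi_\beta)_\psi$ (since $n$ is even), and by \eqref{to compare} the right side is $({}^w\chi_\alpha \boxtimes \xi_\beta)_\psi$; so the condition collapses to $\chi_\alpha \eta_c = {}^w\chi_\alpha$ on $\tspn$, i.e. the unramified character $\chi_\alpha$ is Weyl-conjugate to its twist by the unramified quadratic character $\eta_c$. Because $c$ is a non-square, $\eta_c$ restricted to the coordinates $a_i$ contributes a sign $(-1)^{\mathrm{ord}(a_i)}$ when $c \equiv u_0$, and contributes $\eta_\pi$-type signs when $\mathrm{ord}(c)$ is odd; I would handle the residue-characteristic-odd bookkeeping exactly as in the proof of Lemma \ref{act on E}, reducing in both cases to: $(\alpha_1,\dots,\alpha_n)$ is a permutation-with-inversions (an element of the hyperoctahedral group $W_{2n}$ acting on $({\C^*})^n$) of $(\pm\alpha_1,\dots,\pm\alpha_n)$ with an \emph{odd} number of sign flips among the $-$'s dictated by $\eta_c$.

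The core combinatorial step is then: the multiset $\{\alpha_1^{\pm1},\dots,\alpha_n^{\pm1}\}$ must be invariant under multiplying a prescribed sub-collection of its entries by $-1$. I would argue that this forces the entries to pair up as $\{\gamma, -\gamma\}$: if $\alpha_j$ appears and the twist sends some $\alpha_j \mapsto -\alpha_j$, then $-\alpha_j$ (or $-\alpha_j^{-1}$) must already be in the list, and a counting argument on how many entries get flipped (an odd number, by non-squareness of $c$) shows one cannot have all flips cancel in pairs of the form $\{\gamma,\gamma\}$; the only consistent possibility is that the $n$ values split into $\tfrac n2$ pairs $\{\alpha_i, -\alpha_i\}$, which after reordering by $W_{2n}$ is precisely $\alpha_{_0}$ as in \eqref{special alpha}. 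Conversely, for $\alpha = \alpha_{_0}$ I would exhibit the element $w_{_0} = \mathrm{diag}(w',\dots,w')$ of \eqref{special w}: a direct computation (the same one used in Lemma \ref{act on E}) gives ${}^{w_{_0}}(\chi_{\alpha_{_0}} \boxtimes \xi)_\psi = (\chi_{\alpha_{_0}}\eta_c \boxtimes \xi)_\psi = {}^{(i(c),1)}(\chi_{\alpha_{_0}} \boxtimes \xi)_\psi$ for the appropriate non-square $c$, so $c \in R(\omega)$ and hence $R(\omega)>1$.

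Finally, to see $R(\omega)=2$ exactly (not $4$) in this case, I would note that $R(\omega)$ is a subgroup of $\F^*/{\F^*}^2$ and that the Remark following Theorem \ref{basic red} already isolates the extra condition ($\chi_1^2 = \cdots = \chi_n^2 = \eta_d$ for a second independent non-square $d$) needed to push $|R(\omega)|$ up to $4$; for $\alpha = \alpha_{_0}$ one checks the $\alpha_i$'s are unconstrained beyond the pairing, so generically — and in fact for \emph{every} $\alpha_{_0}$ of this shape, since the relevant equation $\chi_\alpha \eta_{d} = {}^{w}\chi_\alpha$ with $\mathrm{ord}(d)$ odd would force an $\alpha_i$ with $\alpha_i^2 = \eta_\pi$-value, contradicting that $\alpha \in ({\C^*})^n$ is a free parameter — only the single non-trivial twist by $c$ survives. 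I expect the main obstacle to be the careful sign bookkeeping in the non-square case: precisely tracking which entries $\eta_c$ flips and proving that an odd number of flips is incompatible with any pairing other than $\{\gamma,-\gamma\}$, rather than the (routine) verification via $w_{_0}$ in the converse direction.
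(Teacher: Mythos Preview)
Your reduction to the equation $\chi_\alpha\eta_c={}^{w}\chi_\alpha$ and your appeal to Lemma~\ref{act on E} for the $n$ odd case and for the converse via $w_{_0}$ are fine, and this is also what the paper does. But you miss the one observation that makes the paper's proof a two-line argument, and in its place your handling of the case $\mathrm{ord}(c)$ odd is not correct.

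The paper's key point is that $W_{2n}$ sends a \emph{standard} extension of $\omega$ to a standard extension: the Weyl action $(\chi_\alpha)_\psi\mapsto(\chi_{{}^{w}\alpha})_\psi$ manifestly preserves unramifiedness. Since only two of the four elements of $E(\omega)$ are standard, namely $(\chi_{\alpha}\boxtimes\xi_\beta)_\psi$ and $(\chi_{-\alpha}\boxtimes\xi_{(-1)^n\beta})_\psi={}^{(i(u_{_0}),1)}(\chi_{\alpha}\boxtimes\xi_\beta)_\psi$, the only non-square $c$ that can possibly lie in $R(\omega)$ is $u_{_0}$. This gives $R(\omega)\leq 2$ immediately, with no case analysis. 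The remaining question---when is $u_{_0}\in R(\omega)$, i.e.\ when is ${}^{w}\alpha=-\alpha$ for some $w$---is exactly what Lemma~\ref{act on E}(2) answers, and in the unramified setting the character $\chi_{_0}$ of \eqref{r not tri} with $c=u_{_0}$ is precisely $\chi_{\alpha_{_0}}$.

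Your attempt to treat ``both cases'' of $c$ by sign-flip combinatorics breaks down for $\mathrm{ord}(c)$ odd: then $\eta_c$ is nontrivial on $\Of^*$ (see \eqref{non trivial eta}), so $\chi_\alpha\eta_c$ is \emph{ramified} and cannot equal $\chi_{{}^{w}\alpha}$ for any $w$ and any $\alpha$ whatsoever. This is a ramification obstruction, not a constraint on the $\alpha_i$; your phrase ``would force an $\alpha_i$ with $\alpha_i^2=\eta_\pi$-value, contradicting that $\alpha$ is a free parameter'' conflates characters with Satake parameters and, worse, a ``generic'' argument cannot establish a claim for every $\alpha_{_0}$. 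Separately, for $c=u_{_0}$ the twist $\eta_{u_{_0}}$ multiplies \emph{every} $\alpha_i$ by $-1$ (since $\eta_{u_{_0}}(a)=(-1)^{\mathrm{ord}(a)}$), so there is no ``odd number of sign flips''; the condition is simply ${}^{w}\alpha=-\alpha$, and the pairing into $\{\gamma,-\gamma\}$ is what Lemma~\ref{act on E}(2) already packages.
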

\begin{proof} Since $W_{2n}$ maps a standard extension of an unramified genuine charter of $Z \bigl(\mtgspn \bigr)$ to another standard extension of an unramified genuine charter of $Z \bigl(\mtgspn \bigr)$ it follows that $R(\omega) \leq 2$. The rest of this Lemma follows from Lemma \ref{act on E}.
\end{proof}
\begin{lem} \label{short and nice}If $R(\omega)>1$ then the dimension of the space of Whittaker functionals on the space generated by the $\mkgspn$ invariant space inside $I(\omega)$ is 2. In particular, $I(\omega)$ has more then one generic constituent.
\end{lem}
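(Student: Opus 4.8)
The plan is to identify $\dim\mathrm{Hom}_{\mzspn}(V^0,\theta)$ — where $V^0\subseteq I(\omega)$ is the $\mgspn$-submodule generated by the one-dimensional $\mkgspn$-fixed subspace, i.e.\ by the spherical vector $f^{\eta}_{{(\chi_\alpha \boxtimes \xi_\beta)}_\psi}$ of \eqref{exp sph} — with the dimension of the span of the four symmetric spherical Whittaker functions built in Theorem~\ref{main}, and then to observe that $R(\omega)>1$ forces those four functions to collapse to two. For the first step I would use that the twisted Jacquet functor $V\mapsto V_{\zspn,\theta}$ is exact and sends admissible representations to finite-dimensional spaces: dualizing $0\to V^0\to I(\omega)\to I(\omega)/V^0\to 0$ then shows the restriction $\mathrm{Hom}_{\mzspn}(I(\omega),\theta)\twoheadrightarrow\mathrm{Hom}_{\mzspn}(V^0,\theta)$ is onto, with kernel the functionals that vanish on $\rho(\mgspn)f^{\eta}_{{(\chi_\alpha \boxtimes \xi_\beta)}_\psi}$, equivalently whose associated spherical Whittaker function is identically zero. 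By Lemma~\ref{basis lemma again} a basis of $\mathrm{Hom}_{\mzspn}(I(\omega),\theta)$ is given by the $[\F^*:{\F^*}^2]=4$ analytically continued Jacquet integrals indexed by $y\in Y$, and evaluating these on $\rho(g)f^{\eta}_{{(\chi_\alpha \boxtimes \xi_\beta)}_\psi}$ returns, by the very definition of $W^{\eta}_{{(\alpha \boxtimes \beta)}_\psi,y,\theta}$, the function $D^{\eta}(\alpha,y)\,W^{\eta}_{{(\alpha \boxtimes \beta)}_\psi,y,\theta}$. Hence $\dim\mathrm{Hom}_{\mzspn}(V^0,\theta)$ is the dimension of the span of $\{W^{\eta}_{{(\alpha \boxtimes \beta)}_\psi,y,\theta}:y\in Y\}$, which by \eqref{main 1}--\eqref{main 4} is the span of the four functions in \eqref{main basis}.

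Next I would bring in $R(\omega)>1$. By Lemma~\ref{r 2 unram}, $n$ is even and, after a $W_{2n}$-conjugation of $\omega$ that changes nothing thanks to the symmetry \eqref{this is it}, one may take $\alpha=\alpha_{_0}$ as in \eqref{special alpha}, so also $(-1)^n\beta=\beta$. The distinguished Weyl element $w_{_0}$ of \eqref{special w} satisfies ${}^{w_{_0}}\!\chi_{\alpha_{_0}}=\chi_{-\alpha_{_0}}$ — this is precisely the computation carried out in the proof of Lemma~\ref{act on E}, read through the unramified parametrization, the twist by $\eta_{u_{_0}}$ negating each $\alpha_i$. Feeding this into \eqref{this is it} gives $k^{\eta\,\pm}_{\alpha_{_0},\beta,\theta}=k^{\eta\,\pm}_{-\alpha_{_0},\beta,\theta}$, so the four functions of \eqref{main basis} are really only the two functions $k^{\eta\,+}_{\alpha_{_0},\beta,\theta}$ and $k^{\eta\,-}_{\alpha_{_0},\beta,\theta}$, and the span has dimension at most $2$. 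To see it is exactly $2$ I would note that these two functions are supported on disjoint open sets: by \eqref{main def 1}--\eqref{main def 2} the first is supported on the elements of $\mgspn$ with similitude factor in $\Of^*{\F^*}^2$ and the second on those with similitude factor in $\pi\Of^*{\F^*}^2$; and neither is identically zero, each being a normalized spherical Whittaker function of the (generic) representation $I(\omega)$. Hence $\dim\mathrm{Hom}_{\mzspn}(V^0,\theta)=2$.

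For the final assertion: $R(\omega)\ne 1$ already forces $I(\omega)$ reducible by Theorem~\ref{basic red}(1), and in the situations of parts (2)--(3) of that theorem — in particular whenever the restriction of $\omega$ to $Z(\mtgspn)\cap\mspn$ is unitary, the case relevant to Section~\ref{app} — one has $I(\omega)=V_1\oplus V_2$ with $V_1\not\simeq V_2$ irreducible. The $\mkgspn$-fixed line lies in one summand, say $V_1$, so $V^0=V_1$ and $\dim\mathrm{Hom}_{\mzspn}(V_1,\theta)=2$ by the above; since the total dimension is $4$ (Lemma~\ref{basis lemma}) and Whittaker functionals split over direct sums, also $\dim\mathrm{Hom}_{\mzspn}(V_2,\theta)=2>0$, so $V_2$ is generic too and $I(\omega)$ has two non-isomorphic generic constituents; in the remaining cases the same count applied to $V^0$ and to the nonzero quotient $I(\omega)/V^0$ exhibits a generic constituent on each side. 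I expect the only genuinely substantive point to be the collapse in the middle paragraph — forced by feeding the Weyl identity ${}^{w_{_0}}\!\alpha_{_0}=-\alpha_{_0}$ of Lemma~\ref{r 2 unram} into the built-in symmetry \eqref{this is it} — with the one place needing even a short argument being the disjoint-support observation that rules out a further collapse to dimension $1$.
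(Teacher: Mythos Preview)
Your proof is correct and follows essentially the same route as the paper: invoke Lemma~\ref{r 2 unram} to get $n$ even and ${}^{w}\!\alpha=-\alpha$, feed this into the symmetry \eqref{this is it} to collapse the four spherical Whittaker functions of Theorem~\ref{main} to two, and observe these two have disjoint supports. You spell out more explicitly than the paper does why the dimension in question equals the span of those functions (via exactness of the twisted Jacquet functor) and you give a fuller argument for the ``in particular'' clause, but the core is identical; note also that you need not normalize to $\alpha=\alpha_{_0}$---the paper simply uses the existence of some $w$ with ${}^{w}\!\alpha=-\alpha$ and applies \eqref{this is it} directly.
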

\begin{proof}
By Lemma \ref{r 2 unram}, $R(\omega)>1$ implies that $n$ is even and that $^w \! \alpha=-\alpha$ for some $w \in W_{2n}$. Thus, by  \eqref{this is it}
$${k^{\eta \, +}_{\alpha, \beta}}={k^{\eta \, +}_{-\alpha, \beta}}, \, \, \, \,{k^{\eta \, -}_{\alpha, \beta}}={k^{\eta \, -}_{-\alpha, \beta}}.$$
On the other hand ${k^{\eta \, +}_{\alpha, \beta}}$ and ${k^{\eta \, -}_{\alpha, \beta}}$ are linearly independent since these two functions have disjoint supports. Hence, the lemma  follows from the symmetry property of the spanning set constructed in Theorem \ref{main}.
\end{proof}
From this point we assume that $n$ is even. For a character $\chi$ of $\tspn$ and  $$\bold{s}=(s_1,s_2,...s_n)\in {\C^*}^n$$ let $\chi^{\bold{s}}$ be the character of  $\tspn$ given by
$$t=[diag(t_n,t_{n-1},\ldots,t_1),1] \mapsto \chi(t) \prod_{i=1}^n \ab t_i \ab^{s_i}.$$
Let $w_{_0}$ be the $W_{2n}$ element defined  in \eqref{special w}. For $s=(s_1,s_2,...s_n)\in {\C^*}^n$ define and $f \in I\bigl((\chi^{\bold{s}})_\psi\bigr)$ define a complex function $A_{\chi,s}(f)=A_s(f)$ on $\mspn$ by
\begin{equation} \label{my inter}\bigl(A_s(f)\bigr)(g)=\int_{\zspn \cap w_{_0} N_{2n}^-(\F)w_{_0}^{-1}} f\bigl((w_{_0}n,1)g\bigr) \, dn.\end{equation}
Here $N_{2n}^-(\F)=J_{2n} \zspn J_{2n}^{-1}$ is the unipotent radical of $\mspn$ opposite to $\zspn$.

\begin{lem} \label{inter my self} (1) The integral defined in \eqref{my inter} converge in some cone inside ${\C^*}^n$ and has a meromorphic extension to ${\C^*}^n$. Away from its polls it defines an $\mspn$ intertwining map
$$A_s:I\bigl((\chi^{\bold{s}})_\psi\bigr) \rightarrow I\bigl(^{w_{_0}} \! (\chi^{\bold{s}})_\psi\bigr).$$
(2) Let $\chi_{_0}$ be as in \eqref{r not tri}. Then $A_{\chi_{_0},s}$ is analytic  at $s=0$, i.e, at $\chi_{_0}$.\\
(3) Let $\chi_{_0}=\chi_{\alpha_0}$ where $\alpha_0$ is as in \eqref{special alpha} then
$$A_{\chi_{\alpha_0},s}\bigl(f^0_{(\chi_{\alpha_0})_\psi}\bigr)=\Bigl( \frac {L(\eta_{u_0},0)}{L(\eta_{u_0},1)}\Bigr)^{\frac n 2} \bigl(f^0_{(\chi_{-\alpha_0})_\psi}\bigr)$$
\end{lem}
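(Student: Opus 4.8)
\noindent\emph{Approach.} The plan is to write $w_{_0}$ as a product of mutually commuting rank-one reflections, so that the operator in \eqref{my inter} becomes a composition of $n/2$ rank-one intertwining integrals, and then to observe that each of these rank-one factors is actually \emph{linear} (not genuinely metaplectic); everything then reduces to the classical $SL_2(\F)$ Gindikin--Karpelevich computation. Concretely, in the coordinates $\mathrm{diag}(t_n,\dots,t_1,t_n^{-1},\dots,t_1^{-1})$ for $\tspn$, the element $w_{_0}=\mathrm{diag}(w',\dots,w')$ is the product $s_{\alpha_1}s_{\alpha_3}\cdots s_{\alpha_{n-1}}$ of the reflections in the odd-indexed simple roots $\alpha_{2j-1}=e_{2j-1}-e_{2j}$; these are pairwise non-adjacent, hence commute, so $\ell(w_{_0})=n/2$ and $\zspn\cap w_{_0}N_{2n}^-(\F)w_{_0}^{-1}=\prod_{j=1}^{n/2}U_{\alpha_{2j-1}}$ is a direct product of $n/2$ one-parameter root subgroups. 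Let $G_j\simeq SL_2(\F)$ be generated by $U_{\pm\alpha_{2j-1}}$. Since $G_j\subseteq SL_n(\F)\subseteq GL_n(\F)$ (the Siegel Levi) and, by the cocycle formulas of \cite{Sz 1} (cf.\ Lemma \ref{coc prop}(1)), the restriction of $\widetilde c$ to $GL_n(\F)$ is cohomologous to $(g,h)\mapsto(\det g,\det h)\f$, the cover $\mspn$ splits over $G_j$, compatibly with the trivial section on $U_{\pm\alpha_{2j-1}}$ (Lemma \ref{coc prop}(8)). Using the reduced decomposition of $w_{_0}$ and $\ell(w_{_0})=\sum_j\ell(s_{\alpha_{2j-1}})$, the integral in \eqref{my inter} therefore factors as $A_s=M_{s_{\alpha_1}}\circ M_{s_{\alpha_3}}\circ\cdots\circ M_{s_{\alpha_{n-1}}}$, each $M_{s_{\alpha_{2j-1}}}$ being literally a linear $SL_2(\F)$ intertwining integral attached to the embedded $G_j$.

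\noindent\emph{Parts (1) and (2).} For (1): absolute convergence in a cone, meromorphic continuation, and the intertwining property of each $M_{s_{\alpha_{2j-1}}}$ are the classical statements for $SL_2(\F)$ (see \cite{BZ}, and \cite{Mc} for the covering setting), and these pass to the composition; that the target is $I\bigl(^{w_{_0}}(\chi^{\bold s})_\psi\bigr)$ is the usual formal verification using that $\widetilde c$ is trivial on $\zspn$. For (2): the linear operator $M_{s_{\alpha_{2j-1}}}$ has a pole only where the character restricted along the coroot $\alpha_{2j-1}^\vee$ is trivial. For $\chi_{_0}$ as in \eqref{r not tri} a direct evaluation gives $\chi_{_0}^{\bold s}\bigl(\alpha_{2j-1}^\vee(x)\bigr)=\eta_c(x)\,|x|^{\,s_{2j-1}-s_{2j}}$: the factors $\chi_i(t_{2i-1}t_{2i})$ are trivial on $\alpha_{2j-1}^\vee(x)$ since there $t_{2j-1}t_{2j}=1$, while $\eta_c$ detects $t_{2j}=x^{-1}$. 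At $\bold s=0$ this equals the nontrivial character $\eta_c$ (as $c\notin{\F^*}^2$), so no factor, hence $A_{\chi_{_0},s}$, has a pole at $\bold s=0$.

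\noindent\emph{Part (3).} Take $\chi_{_0}=\chi_{\alpha_{_0}}$ with $\alpha_{_0}$ as in \eqref{special alpha}; by \eqref{unram quad} this is of the form \eqref{r not tri} with $c=u_{_0}$, so by (2) the value $A:=A_{\chi_{\alpha_{_0}},s}\big|_{\bold s=0}$ is a well-defined $\mspn$-map $I\bigl((\chi_{\alpha_{_0}})_\psi\bigr)\to I\bigl(^{w_{_0}}(\chi_{\alpha_{_0}})_\psi\bigr)$, and one checks $^{w_{_0}}(\chi_{\alpha_{_0}})_\psi=(\chi_{\alpha_{_0}}\eta_{u_{_0}})_\psi=(\chi_{-\alpha_{_0}})_\psi$. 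Being $\mkspn$-equivariant between spaces each carrying a one-dimensional spherical line, $A$ must send $f^0_{(\chi_{\alpha_{_0}})_\psi}$ to a scalar multiple of $f^0_{(\chi_{-\alpha_{_0}})_\psi}$, and the scalar is its value at the identity, $\int_{U}f^0_{(\chi_{\alpha_{_0}})_\psi}\bigl((w_{_0}n,1)\bigr)\,dn$ with $U=\zspn\cap w_{_0}N_{2n}^-(\F)w_{_0}^{-1}$. By the rank-one factorization and the Gindikin--Karpelevich formula for linear $SL_2(\F)$, the $j$-th factor equals $\frac{1-q^{-1}z_j}{1-z_j}$ with $z_j=\chi_{\alpha_{_0}}\bigl(\alpha_{2j-1}^\vee(\pi)\bigr)=\alpha_j/(-\alpha_j)=-1$, i.e.\ $\frac{1+q^{-1}}{2}$. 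Since $\eta_{u_{_0}}$ is unramified with $\eta_{u_{_0}}(\pi)=-1$ we have $L(\eta_{u_{_0}},s)=(1+q^{-s})^{-1}$, hence $L(\eta_{u_{_0}},0)/L(\eta_{u_{_0}},1)=\tfrac{1+q^{-1}}{2}$; multiplying the $n/2$ factors yields the claimed identity.

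\noindent\emph{Main obstacle.} The single point that is not routine is the splitting of $\mspn$ over the short-root subgroups $G_j$ — this is exactly what forces the rank-one factors to be \emph{linear} rather than genuinely metaplectic, the latter producing a pole at $\bold s=0$ incompatible with (2) — together with the bookkeeping of the $\gamma_\psi$-twists in the Weyl-group action needed to pin down $^{w_{_0}}(\chi_{\alpha_{_0}})_\psi$ as precisely $(\chi_{-\alpha_{_0}})_\psi$ and the Satake parameters $z_j$ as precisely $-1$; the remaining ingredients are standard $SL_2$ theory.
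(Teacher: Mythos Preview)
Your proof is correct and follows essentially the same approach as the paper: both decompose $w_{_0}$ as a product of $n/2$ commuting short-root simple reflections, use that the cover splits over the Siegel Levi $GL_n(\F)$ (the paper via the explicit Rao formula \eqref{rao rao} and the map $C_j$, you via the determinantal form of the cocycle) to reduce each rank-one factor to a linear $GL_2(\F)$/$SL_2(\F)$ intertwining computation, and then read off analyticity and the spherical constant from the standard Gindikin--Karpelevich formula (the paper citing Bump's Propositions 4.5.7 and 4.6.7, you computing $z_j=-1$ directly). The only cosmetic difference is that you invoke $SL_2$ and the explicit value $(1+q^{-1})/2$ where the paper invokes $GL_2$ and the $L$-function ratio, but these are the same computation.
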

\begin{proof} (1) is well known. It is proven by the same standard arguments used for linear groups. To prove (2) we may assume the $g$ in \eqref{my inter} is $(I_{2n},1)$. Also by a standard argument, we decompose $A_s$
to a rank one intertwining operators:

$$A_s=A_s^{\frac n 2} \circ A_s^{\frac {n-1} {2}} \circ \dots \circ A_s^1 $$ where
$$A^j_s:I\bigl(^{w_{j-1}w_{j-2}\ldots{w_1}} \! (\chi_{_0}^{\bold{s}})_\psi\bigr) \rightarrow I\bigl(^{w_{j}w_{j-1}\ldots{w_1}} \! (\chi_{_0}^{\bold{s}})_\psi\bigr).$$ is defined by (the meromorphic continuation of)

$$\bigl(A^j_s (f) \bigr)(g)=\int_{\F} f\bigl((w^j_{_0}n^j(x),1)g\bigr) \, dx.$$
Here $$w^j_{_0}=diag \bigl(I_{n-2j},w',I_{2(j-1)},I_{n-2j},w',I_{2(j-1)}\bigr),$$
where $w'=\begin{pmatrix} _{0} &
_{1}\\_{1} & _{0} \end{pmatrix}$ and $$n^j(x)=diag \bigl(I_{n-2j},n'(x),I_{2(j-1)},I_{n-2j},^t n(-x),I_{2(j-1)}\bigr),$$
where $n'(x)=\begin{pmatrix} _{1} &
_{x}\\_{0} & _{1} \end{pmatrix}.$
Since the $\C^1$ cover of $\spn$ splits over the Siegel parabolic subgroup, the proof of (2) and (3) is now reduced to the well known $GL_2(\F)$ computations. More precisely, by Rao cocycle Formula given in Theorem 5.3 of \cite{R},
\begin{equation} \label{rao rao} c\Bigr(\begin{pmatrix} _{A} &
_{B}\\_{0} & _{^{t}A^{-1}} \end{pmatrix}, \begin{pmatrix} _{A'} &
_{B'}\\_{0} & _{^{t}A'^{-1}} \end{pmatrix} \Bigr)=\bigl(det(A),det(A')\bigr)\f. \end{equation}
For $f \in I\bigl((\chi^{\bold{s}})_\psi \bigr)$ and $g \in GL_2(\F)$ define
$$\bigl(C_j(f)\bigr)(g)=\gamma_\psi(g)\ab det(g) \ab^{\half-2j}f\bigl(I_{n-2j},g,I_{2(j-1)},I_{n-2j},^t g ^{-1},I_{2(j-1)}\bigr).$$
From \eqref{gammaprop} and \eqref{rao rao} it follows that
$$C^j(f)\in Ind^{GL_2(\F)}_{B(\F)} (\chi^{\bold{s}})^j.$$
Here $B(\F)$ is the standard Borel subgroup of $GL_2(\F)$ and $$ (\chi^{\bold{s}})^j(t_2,t_1)=\chi^{\bold{s}}\bigl(I_{n-2j},t_2,t_1,I_{2(j-1)},I_{n-2j},t_2^{-1},t_1^{-1},I_{2(j-1)}\bigr).$$
(the term $ \ab det(g) \ab^{\half-2j}$ appears here to balance the difference between the modular functions of $B_{2n}(\F)$ and $B(\F)$). The point is that for $f \in I\bigl((\chi^{\bold{s}})_\psi \bigr)$,

$$\bigl(A_{\chi^{\bold{s}}}^j (f)\bigr)(I_{2n},1)=\bigl(A_{(\chi^{\bold{s}})^j} (C_j(f)\bigr)(I_2),$$
where $A_{(\chi^{\bold{s}})^j}$ is the standard $GL_2(\F)$ intertwining integral, see page 478 of \cite{B} for example. This completes the reduction to $GL_2(F)$: (2) follows now from Proposition 4.5.7 of \cite{B} and (3) follows from Proposition 4.6.7 of \cite{B}.

\end{proof}
\begin{cor} \label{this is the point} For $f \in I\bigl((\chi_{\alpha_{0}} \boxtimes \xi_\beta)_\psi \bigr)$ and $g \in \mgspn$, define $\bigl(A(f)\bigr)(g)$ to be the meromorphic continuation of $$\bigl(A_s(f)\bigr)(g)=\int_{\zspn \cap w_{_0} N_{2n}^-(\F)w_{_0}^{-1}} f\bigl((w_{_0}n,1)g\bigr) \, dn.$$ The map $$A:I\bigl((\chi_{\alpha_0} \boxtimes \xi_\beta)_\psi\bigr) \rightarrow I\bigl((\chi_{-\alpha_0} \boxtimes \xi_\beta)_\psi\bigr)$$
is a well defined $\mgspn$ intertwining map and $$A\bigl(f^{\eta}_{(\chi_{\alpha_0} \boxtimes \xi_\beta)_\psi} \bigr)=\Bigl( \frac {L(\eta_{u_0},0)}{L(\eta_{u_0},1)}\Bigr)^{\frac n 2}f^{\eta}_{(\chi_{-\alpha_0} \boxtimes \xi_\beta)_\psi}.$$
\end{cor}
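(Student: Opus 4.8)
The plan is to recognize $A$ as the standard intertwining operator $A_{w_{_0}}$ of the Introduction (with $w_{_0}$ as in \eqref{special w}), to deduce from Lemma \ref{inter my self} that it is holomorphic at the unramified point $(\chi_{\alpha_0}\boxtimes\xi_\beta)_\psi$, and then to compute its value on $f^{\eta}_{(\chi_{\alpha_0}\boxtimes\xi_\beta)_\psi}$ by evaluating at $(I_{2n},1)$, where the integral \eqref{my inter} takes place entirely inside $\mspn$ and so is governed directly by Lemma \ref{inter my self}(3).

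First I would pin down the target. Since $w_{_0}\in W_{2n}$, formula \eqref{to compare} gives $^{w_{_0}}(\chi_{\alpha_0}\boxtimes\xi_\beta)_\psi=(^{w_{_0}}\chi_{\alpha_0}\boxtimes\xi_\beta)_\psi$, with the $\xi_\beta$ unchanged because $w_{_0}$ acts trivially on the central part; and the computation made in the proof of Lemma \ref{act on E} (or a direct check: for $\alpha_0$ of the shape \eqref{special alpha} the within-pairs permutation induced by $w_{_0}$ coincides with negation) shows $^{w_{_0}}\chi_{\alpha_0}=\chi_{-\alpha_0}$. Thus $A=A_{w_{_0}}$ is a priori a meromorphic $\mgspn$-intertwining map $I\bigl((\chi_{\alpha_0}\boxtimes\xi_\beta)_\psi\bigr)\to I\bigl((\chi_{-\alpha_0}\boxtimes\xi_\beta)_\psi\bigr)$. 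To upgrade "meromorphic" to "holomorphic at our point" I would restrict to $\mspn$: by Section 3.3 of \cite{Sz 5} (cf. the Remark preceding this subsection) $I\bigl((\chi_{\alpha_0}\boxtimes\xi_\beta)_\psi\bigr)|_{\mspn}=\bigoplus_{c\in\F^*/{\F^*}^2}I^{c}\bigl((\chi_{\alpha_0}\boxtimes\xi_\beta)_\psi\bigr)$ with $I^{c}\simeq I\bigl((\chi_{\alpha_0}\eta_c)_\psi\bigr)$, and, as noted in the Introduction, $A_{w_{_0}}$ preserves each $I^{c}$. Since $A_{w_{_0}}$ is given by the very same integral \eqref{my inter} as the $\mspn$-operator of Lemma \ref{inter my self}(1), on $I^{c}$ it is identified, through these isomorphisms, with that $\mspn$-operator for the character $\chi_{\alpha_0}\eta_c$. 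For each of the four classes $c$ the character $\chi_{\alpha_0}\eta_c$ is still of the form \eqref{r not tri} — for $c\in\{1,u_{_0}\}$ it is $\chi_{\pm\alpha_0}$, while for $c\in\{\pi,\pi u_{_0}\}$ one replaces each $\chi_i$ in \eqref{r not tri} by $\chi_i\eta_\pi$ and keeps the same non-square $u_{_0}$ — so Lemma \ref{inter my self}(2) applies to each summand and $A_{w_{_0}}$ is holomorphic at the unramified point on all of $I\bigl((\chi_{\alpha_0}\boxtimes\xi_\beta)_\psi\bigr)$. This yields the first assertion.

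For the value on the spherical vector, observe that $A\bigl(f^{\eta}_{(\chi_{\alpha_0}\boxtimes\xi_\beta)_\psi}\bigr)$ is a $\mkgspn$-fixed vector in $I\bigl((\chi_{-\alpha_0}\boxtimes\xi_\beta)_\psi\bigr)$, whose space of $\mkgspn$-fixed vectors is one dimensional and spanned by $f^{\eta}_{(\chi_{-\alpha_0}\boxtimes\xi_\beta)_\psi}$; hence $A\bigl(f^{\eta}_{(\chi_{\alpha_0}\boxtimes\xi_\beta)_\psi}\bigr)=c\, f^{\eta}_{(\chi_{-\alpha_0}\boxtimes\xi_\beta)_\psi}$, and evaluating at $(I_{2n},1)$, where the right side equals $1$, gives $c=\int_{\zspn\cap w_{_0}N_{2n}^-(\F)w_{_0}^{-1}}f^{\eta}_{(\chi_{\alpha_0}\boxtimes\xi_\beta)_\psi}\bigl((w_{_0}n,1)\bigr)\,dn$. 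Every point $(w_{_0}n,1)$ in this integral lies in $\mspn$ (similitude $1$), so only the $I^{1}$-component of $f^{\eta}_{(\chi_{\alpha_0}\boxtimes\xi_\beta)_\psi}$ enters; by the Iwasawa formula \eqref{exp sph} together with the description of standard models, that component is precisely the normalized $\mspn$-spherical vector $f^0_{(\chi_{\alpha_0})_\psi}$. Therefore $c=\bigl(A_{\chi_{\alpha_0},0}(f^0_{(\chi_{\alpha_0})_\psi})\bigr)(I_{2n},1)$, which by Lemma \ref{inter my self}(3) equals $\bigl(L(\eta_{u_0},0)/L(\eta_{u_0},1)\bigr)^{\frac n 2}$, as claimed.

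The step I expect to be the main obstacle is the bookkeeping underlying the second and third paragraphs: checking that $A_{w_{_0}}$, restricted to each $\mspn$-summand $I^{c}$ and transported through the isomorphism $I^{c}\simeq I\bigl((\chi_{\alpha_0}\eta_c)_\psi\bigr)$, is literally the operator of Lemma \ref{inter my self} (including compatibility of Rao's cocycle and of the spherical normalizations under that isomorphism), that all four twists $\chi_{\alpha_0}\eta_c$ genuinely fall under the hypothesis \eqref{r not tri} of Lemma \ref{inter my self}(2), and that the $I^{1}$-component of the $\mgspn$-spherical vector is normalized to $f^0_{(\chi_{\alpha_0})_\psi}$ itself rather than to a scalar multiple of it.
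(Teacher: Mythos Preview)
Your proof is correct and follows essentially the same approach as the paper: both reduce the well-definedness and the spherical computation to Lemma \ref{inter my self}, using one-dimensionality of the $\mkgspn$-fixed space and the fact that the restriction of $f^{\eta}_{(\chi_{\alpha_0}\boxtimes\xi_\beta)_\psi}$ to $\mspn$ is exactly $f^0_{(\chi_{\alpha_0})_\psi}$. The one minor difference is in establishing holomorphicity: the paper extends the $\mspn$-operator first to $\mgsppn$ via $\mgsppn=Z(\mgsppn)\mspn$ and then to $\mgspn$ by induction in stages (as in the proof of Theorem \ref{basic red}), so a single application of Lemma \ref{inter my self}(2) at $\chi_{\alpha_0}$ suffices; you instead decompose the $\mspn$-restriction into the four summands $I^c$ and verify that each twist $\chi_{\alpha_0}\eta_c$ is again of the form \eqref{r not tri}, which works but requires four checks rather than one.
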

\begin{proof}
First, by using the fact that $\mgsppn= Z(\mgsppn)\mspn$, we extend $A_s$ to be a $\mgsppn$ intertwining map from $I'\bigl((\chi_{\alpha_0} \boxtimes \xi_\beta)_\psi\bigr)$ to $I'\bigl((\chi_{-\alpha_0} \boxtimes \xi_\beta)_\psi\bigr)$ (the notation $I'$ was defined in \eqref{i prime}). Then, using the same induction by stages argument used in the beginning of  proof of Theorem \ref{basic red}, we push $A_s$ to be the $\mgspn$ intertwining map defined in this corollary. Part (2) of Lemma \ref{inter my self} grantees that  $A$ is holomorphic at $(\chi_{\alpha_0} \boxtimes \xi_\beta)_\psi$.

Since the $\mgspn$ invariant space inside $I\bigl((\chi_\alpha \boxtimes \xi_\beta)_\psi\bigr)$ is one dimensional it follows that
$$A\bigl(f^{\eta}_{(\chi_{\alpha_0} \boxtimes \xi_\beta)_\psi} \bigr)=c_{\alpha_0}f^{\eta}_{(\chi_{-\alpha_0} \boxtimes \xi_\beta)_\psi},$$
where $c_{\alpha_0}$ is the analytic continuation of
$$\int_{\zspn \cap w_{_0} N_{2n}^-(\F)w_{_0}^{-1}} f^{\eta}_{(\chi_{\alpha_0} \boxtimes \xi_\beta)_\psi} (w_{_0}n)  \, dn. $$
Note that since the restriction of both $f^{\pm}_{(\chi_\alpha \boxtimes \xi_\beta)_\psi}$to $\mspn$ is $f^{0}_{(\chi_\alpha)_\psi}$, it follows that $c_{\alpha_0}$ does not depend on $\eta$. In fact, by (3) of Lemma \ref{inter my self}
$$c_{\alpha_0}=\Bigl( \frac {L(\eta_{u_0},0)}{L(\eta_{u_0},1)}\Bigr)^{\frac n 2} \neq 0.$$
\end{proof}
Recall that $\eta_1$ and $\eta_\pi$ are respectively the trivial and non-trial quadratic characters of $\Of^*$. We shall denote
$$\mkgspnp=K^{\eta_1}_{2n}(\F), \, \, \, \mkgspnm=K^{\eta_{u_{0}}}_{2n}(\F).$$
We shall also denote by $I^+(\omega)$ and $I^-(\omega)$ the (isomorphism classes) of the sub representations of $I(\omega)$ generated by the $\mkgspnp$ and $\mkgspnm$ invariant subspaces.

\begin{thm} \label{the end}Assume that  $I \bigl( (\chi_\alpha)_\psi \bigr)$ is an irreducible $\mspn$ module. Then, $I(\omega)$ is reducible if and only if  $R(\omega)>1$.  In this case $I^+(\omega)$ and $I^-(\omega)$ are irreducible and $$I(\omega) \simeq I^+(\omega) \oplus I^-(\omega)$$ is a decomposition of $I(\omega)$ into a direct sum of two irreducible non-isomorphic generic subspaces, each has a space of Whittaker functionals of dimension 2.
\end{thm}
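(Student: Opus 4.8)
The reducibility dichotomy and the coarse decomposition are read off from Theorem \ref{basic red}. If $R(\omega)=1$, then since $I\bigl((\chi_\alpha)_\psi\bigr)$ is irreducible, part (1) of that theorem gives $I(\omega)$ irreducible. If $R(\omega)>1$, then Lemma \ref{r 2 unram} forces $n$ even and $R(\omega)=2$, and part (2) of Theorem \ref{basic red} gives $I(\omega)=J_1\oplus J_2$ with $J_1,J_2$ irreducible $\mgspn$-modules; since the proof of that part computes the commuting algebra of $I(\omega)$ to be two-dimensional, $J_1\not\simeq J_2$. By Lemma \ref{r 2 unram} I may conjugate by $W_{2n}$ — which affects neither the isomorphism class of $I(\omega)$ nor the statement — so that $\alpha=\alpha_{_0}$ is as in \eqref{special alpha}, hence ${}^{w_{_0}}\!\alpha_{_0}=-\alpha_{_0}$; I work inside the standard model $I\bigl((\chi_{\alpha_{_0}}\boxtimes\xi_\beta)_\psi\bigr)$.

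The central step is to match $J_1,J_2$ with $I^+(\omega),I^-(\omega)$. Let $v^{+}$ and $v^{-}$ be the normalized $\mkgspnp$- and $\mkgspnm$-invariant vectors of $I(\omega)$ given by \eqref{exp sph}; by the Lemma preceding \eqref{exp sph} each spans a one-dimensional invariant line. Since $J_1$ and $J_2$ are $\mgspn$-stable, each of $v^{\pm}$ lies in a single summand and generates it; relabeling $J_1,J_2$ so that $v^{+}\in J_1$, we get $I^+(\omega)=J_1$, while $I^-(\omega)$ is $J_1$ or $J_2$. I claim $v^{-}\notin J_1$. Both $v^{+}$ and $v^{-}$ are supported on $\lambda^{-1}(\Of^*{\F^*}^2)$, and they coincide on $\lambda^{-1}({\F^*}^2)$: for $g$ with $\lambda(g)\in{\F^*}^2$ one writes $g=zb'k'$ with $z\in Z\bigl(\mgsppn\bigr)$, $b'\in\mbspn$ and $k'\in\mkspn$, and since $\mkspn$ embeds into $\mkgspnp$ and into $\mkgspnm$ in the same way, \eqref{exp sph} returns the same value. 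Hence $w_0:=v^{+}-v^{-}$ is supported on $\lambda^{-1}(u_0{\F^*}^2)$, and it is nonzero — otherwise $v^{+}=v^{-}$ would be invariant under the subgroup generated by $\mkgspnp$ and $\mkgspnm$, which contains the nontrivial element $(I_{2n},-1)$ (compare the two splittings on a $k\in\kgspn$ with $\lambda(k)\notin{\Of^*}^2$), forcing $v^{+}=0$.

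Thus $w_0$ is a nonzero vector of the $\mspn$-submodule $I^{u_0}\bigl((\chi_{\alpha_{_0}}\boxtimes\xi_\beta)_\psi\bigr)$, which as an $\mspn$-module is $I\bigl((\chi_{\alpha_{_0}}\eta_{u_0})_\psi\bigr)=I\bigl((\chi_{-\alpha_{_0}})_\psi\bigr)$, a $W_{2n}$-conjugate of the irreducible $I\bigl((\chi_{\alpha_{_0}})_\psi\bigr)$, hence itself irreducible. So $w_0$ generates it, and therefore $I^{u_0}\bigl((\chi_{\alpha_{_0}}\boxtimes\xi_\beta)_\psi\bigr)\subseteq J_1$. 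But right translation by any element of $\mgspn$ of similitude $c$ carries $I^{u_0}\bigl(\cdots\bigr)$ bijectively onto $I^{u_0c}\bigl(\cdots\bigr)$, so this single piece already generates all of $I(\omega)$ over $\mgspn$; this would give $J_1=I(\omega)$, contradicting properness. Hence $v^{-}\in J_2$, so $I^-(\omega)=J_2$, and $I(\omega)=I^+(\omega)\oplus I^-(\omega)$ with irreducible, non-isomorphic summands.

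It remains to compute the Whittaker dimensions. By Lemma \ref{basis lemma}, $\dim Hom_{\mgspn}\bigl(I(\omega),Ind^{\mgspn}_{\overline{N_{2n}(\F)}}\theta\bigr)=[\F^*:{\F^*}^2]=4$, and since $I(\omega)=I^+(\omega)\oplus I^-(\omega)$ this is the sum of the two analogous dimensions, so it suffices to pin down one of them. Because $v^{+}$ generates $I^+(\omega)$, the map sending a $\theta$-Whittaker functional $\lambda$ of $I^+(\omega)$ to the function $g\mapsto\lambda(\rho(g)v^{+})$ is injective, and its image is the span of the images of $v^{+}$ under the Whittaker functionals of $I(\omega)$. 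By Theorem \ref{main} that span is the linear span of the functions $k^{\eta\,\pm}_{\pm\alpha_{_0},\beta,\theta}$ (for the embedding character $\eta$ corresponding to $\mkgspnp$), and since ${}^{w_{_0}}\!\alpha_{_0}=-\alpha_{_0}$, the symmetry \eqref{this is it} collapses it to the span of $k^{\eta\,+}_{\alpha_{_0},\beta,\theta}$ and $k^{\eta\,-}_{\alpha_{_0},\beta,\theta}$, which are linearly independent because they are supported, in the similitude variable, on the disjoint sets $\Of^*{\F^*}^2$ and $\pi\Of^*{\F^*}^2$. Hence $I^+(\omega)$ — and therefore also $I^-(\omega)$ — carries a two-dimensional space of Whittaker functionals. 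The one step I expect to require real work is the matching step, showing that the two spherical lines lie in different summands; everything else is assembled from Theorems \ref{basic red} and \ref{main}, Lemma \ref{basis lemma}, and the $\mspn$-decomposition $I(\omega)=\bigoplus_c I^{c}\bigl((\chi_{\alpha_{_0}}\boxtimes\xi_\beta)_\psi\bigr)$.
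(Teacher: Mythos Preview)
Your argument is correct, and its overall skeleton (Theorem \ref{basic red} for the dichotomy and the two-summand decomposition, Theorem \ref{main} with the symmetry \eqref{this is it} for the Whittaker dimensions) matches the paper. The genuine difference is in the matching step, showing that $v^{+}$ and $v^{-}$ lie in different irreducible summands.

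The paper proves this by constructing a self-intertwining operator $T=B\circ A$ on $I\bigl((\chi_{\alpha_{_0}}\boxtimes\xi_\beta)_\psi\bigr)$: first the standard intertwining operator $A$ attached to $w_{_0}$ (whose analyticity at $\alpha_{_0}$ and whose action on the spherical vector are established in Lemma \ref{inter my self} and Corollary \ref{this is the point} by a reduction to $GL_2(\F)$), then left translation $B$ by $(i(u_{_0}),1)$. One computes $T(f^{\pm})=\pm\bigl(L(\eta_{u_0},0)/L(\eta_{u_0},1)\bigr)^{n/2}f^{\pm}$, so $f^{+}$ and $f^{-}$ lie in distinct eigenspaces of $T$, hence in distinct summands.

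Your route avoids intertwining operators entirely: assuming both $v^{+},v^{-}\in J_1$, the difference $v^{+}-v^{-}$ is a nonzero vector supported on the $u_{_0}$-similitude coset, hence generates the irreducible $\mspn$-module $I^{u_{_0}}\bigl((\chi_{\alpha_{_0}}\boxtimes\xi_\beta)_\psi\bigr)$, and right $\mgspn$-translates of that piece fill out $I(\omega)$, contradicting properness of $J_1$. This is more elementary and self-contained; it uses only the coset decomposition from the Introduction and the assumed $\mspn$-irreducibility. What the paper's approach buys in exchange is an explicit description of $I^{\pm}(\omega)$ as the $\pm\bigl(L(\eta_{u_0},0)/L(\eta_{u_0},1)\bigr)^{n/2}$-eigenspaces of a concrete operator, in line with the paper's emphasis on explicit formulas.
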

\begin{proof} Given Theorem \ref{basic red} and Lemma \ref{short and nice} we only need to show that $I^+(\omega) \neq I^-(\omega)$. Since $I \bigl( (\chi_\alpha)_\psi \bigr)$ is irreducible we may conjugate the inducing character
by $w \in W_{2n}$ without changing the isomorphism class of $I\bigl( (\chi_\alpha)_\psi \bigr)$. Hence, since $R(\omega)>1$ it follows from Lemma \ref{special alpha} that we may assume that $\alpha=\alpha_{_0}$. Denote $f^+_{(\chi_{\alpha} \boxtimes \xi_\beta)_\psi}=f^{\eta_1}_{(\chi_{\alpha} \boxtimes \xi_\beta)_\psi}$ and $f^-_{(\chi_{\alpha} \boxtimes \xi)_\psi}=f^{\eta_\pi}_{(\chi_{\alpha} \boxtimes \xi)_\psi}$. We shall construct here a self intertwining map
$T$ on $I\bigl( (\! \chi_{\alpha} \boxtimes \xi_\beta)_\psi\bigr)$ and show that $f^{\pm}_{(\chi_{\alpha} \boxtimes \xi_\beta)_\psi}$ are eigen values of $T$ corresponding to two different eigen values. We define $$B:I\bigl((\chi_{-\alpha} \boxtimes \xi_\beta)_\psi\bigr) \rightarrow I\bigl((\chi_{\alpha} \boxtimes \xi_\beta)_\psi\bigr)$$ by
$$B \bigl(f \bigr)(g)=f\bigl((i(u_{_0},1)g\bigr).$$
By the same argument as in Corollary \ref{this is the point}
$$B\bigl(f^{\eta}_{(\chi_{-\alpha} \boxtimes \xi_\beta)_\psi} \bigr)=d_\eta f^{\eta}_{(\chi_{\alpha} \boxtimes \xi_\beta)_\psi}$$
where this time $d_\eta$ does depend on $\eta$ and is defined by
$$d_\eta=f^{\eta}_{(\chi_{-\alpha} \boxtimes \xi_\beta)_\psi}\bigl((i(u_{_0},1)\bigr)=\eta(u_{_0}).$$
Let $A$ be the intertwining map defined in  Corollary \ref{this is the point}.$$B \circ A: I\bigl((\chi_{\alpha} \boxtimes \xi_\beta)_\psi\bigr) \rightarrow I\bigl((\chi_{\alpha} \boxtimes \xi_\beta)_\psi\bigr)$$ is a self intertwining operator and by Corollary \ref{this is the point}

$$B \circ A \bigl(f^+_{(\chi_\alpha \boxtimes \xi_\beta)_\psi} \bigr)=\Bigl( \frac {L(\eta_{u_0},0)}{L(\eta_{u_0},1)}\Bigr)^{\frac n 2}f^+_{(\chi_\alpha \boxtimes \xi_\beta)_\psi}, \, \, \, \,
B \circ A \bigl(f^-_{(\chi_\alpha \boxtimes \xi_\beta)_\psi} \bigr)=-\Bigl( \frac {L(\eta_{u_0},0)}{L(\eta_{u_0},1)}\Bigr)^{\frac n 2}f^-_{(\chi_\alpha \boxtimes \xi_\beta)_\psi}.$$

\end{proof}
Recall that if the restriction of $\omega$ to $Z\bigl(\mtgspn \bigr) \cap \mspn$ is unitary then  $I \bigl( (\chi_\alpha)_\psi \bigr)$ is irreducible. Hence, we have proven:
\begin{cor} All the reducible unramified principal series representations of $\mgspn$ induced from a unitary data are a direct some of two non-isomorphic irreducible generic subspaces. Each has a 2 dimensional space of Whittaker functionals.
\end{cor}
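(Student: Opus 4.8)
The plan is to obtain this corollary as an immediate consequence of Theorem~\ref{the end}, once the irreducibility hypothesis appearing there is verified for unitary data. First I would make precise what ``unitary data'' means: a genuine unramified character $\omega$ of $Z\bigl(\mtgspn\bigr)$ is unitary data exactly when its restriction to $Z\bigl(\mtgspn\bigr)\cap\mspn$ is unitary, which for a standard model $I\bigl((\chi_\alpha\boxtimes\xi_\beta)_\psi\bigr)$ amounts to $|\alpha_i|=1$ for every $i$; in particular the restriction $(\chi_\alpha)_\psi$ of the inducing character to $\mtspn$ is then a unitary genuine character of $\mtspn$.

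Next I would invoke the irreducibility result recalled immediately before the corollary (see \cite{HM}, or Theorem~5.1 of \cite{Sz 4}): a genuine principal series representation of $\mspn$ induced from a unitary genuine character is irreducible. Applied to $(\chi_\alpha)_\psi$, this yields that $I\bigl((\chi_\alpha)_\psi\bigr)$ is an irreducible $\mspn$ module, which is precisely the standing hypothesis of Theorem~\ref{the end}.

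Finally, Theorem~\ref{the end} completes the argument with no further work: under that hypothesis $I(\omega)$ is reducible if and only if $R(\omega)>1$, and in the reducible case $I^+(\omega)$ and $I^-(\omega)$ are irreducible, non-isomorphic, generic, each carrying a $2$-dimensional space of Whittaker functionals, with $I(\omega)\simeq I^+(\omega)\oplus I^-(\omega)$. Hence a reducible unramified principal series induced from unitary data automatically satisfies $R(\omega)>1$ and admits the stated decomposition. I do not anticipate a genuine obstacle here; the only point worth double-checking is that the unitary hypothesis is being used to supply the full irreducibility input of Theorem~\ref{the end} rather than merely the weaker hypotheses of Theorem~\ref{basic red}(3), and --- for context --- that such reducible representations actually occur, which follows from Lemma~\ref{act on E}(2) (for $n$ even there is $\omega_0$ with $R(\omega_0)$ nontrivial, and the parameters entering \eqref{r not tri} can be chosen unitary).
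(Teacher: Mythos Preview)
Your proposal is correct and matches the paper's own argument essentially verbatim: the paper simply recalls (citing \cite{HM} or Theorem~5.1 of \cite{Sz 4}) that unitarity of the restriction of $\omega$ to $Z\bigl(\mtgspn\bigr)\cap\mspn$ forces $I\bigl((\chi_\alpha)_\psi\bigr)$ to be irreducible, and then invokes Theorem~\ref{the end}. Your additional remarks on the meaning of ``unitary data'' and on the existence of such reducible representations are accurate elaborations, but the core deduction is identical.
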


Department of Mathematics, Purdue University.\\
email: dszpruch@math.purdue.edu 

\end{document}